\def\nm{\noalign{\medskip}}
\title{Target Identification Using Dictionary Matching
of Generalized Polarization Tensors\thanks{\footnotesize This work
was supported by ERC Advanced Grant Project MULTIMOD--267184 and
NRF grants No. 2009-0090250 and 2010-0017532.}}
\author{}
\begin{document}

\author{Habib Ammari\thanks{\footnotesize Department of Mathematics and Applications,
Ecole Normale Sup\'erieure, 45 Rue d'Ulm, 75005 Paris, France
(habib.ammari@ens.fr, boulier@dma.ens.fr, wjing@dma.ens.fr,
han.wang@ens.fr).} \and Thomas Boulier\footnotemark[2] \and
Josselin Garnier\thanks{\footnotesize Laboratoire de
Probabilit\'es et Mod\`eles Al\'eatoires \& Laboratoire
Jacques-Louis Lions, Universit\'e Paris VII, 75205 Paris Cedex 13,
France (garnier@math.jussieu.fr).} \and Wenjia
Jing\footnotemark[2] \and Hyeonbae Kang\thanks{Department of
Mathematics, Inha University, Incheon 402-751, Korea
(hbkang@inha.ac.kr).}  \and Han Wang\footnotemark[2]}

\maketitle

\begin{abstract}
The aim of this paper is to provide a fast and efficient procedure
for (real-time) target identification in imaging based on matching
on a dictionary of precomputed generalized polarization tensors
(GPTs). The approach is based on some important properties of the
GPTs and new invariants. A new shape representation is given and
numerically tested in the presence of measurement noise. The
stability and resolution of the proposed identification algorithm
is numerically quantified.
\end{abstract}

\bigskip

\noindent {\footnotesize Mathematics Subject Classification
(MSC2000): 35R30, 35B30}

\noindent {\footnotesize Keywords: generalized polarization
tensors, target identification, shape representation,  stability
analysis}

\section{Introduction}\label{sec:introduction}

With each domain and material parameter, an infinite number of
tensors, called the Generalized Polarization Tensors (GPTs), is
associated. The concept of GPTs was introduced in
\cite{AK_SIMA_03, AK04}. The GPTs contain significant information
on the shape of the domain \cite{AK_MMS_03}. It occurs in several
interesting contexts, in particular, in low-frequency scattering
\cite{dassios, AK04}, asymptotic models of dilute composites (see
\cite{milton} and \cite{AKT_AA_05}), in invisibility cloaking in
the quasi-static regime \cite{AKLL11} and in potential theory
related to certain questions arising in hydrodynamics \cite{PS51}.

Another important use of this concept is for imaging diametrically
small inclusions from boundary measurements. In fact, the GPTs are
the basic building blocks for the asymptotic expansions of the
boundary voltage perturbations due to the presence of small
conductivity inclusions inside a conductor \cite{FV_ARMA_89,CMV98,
AK_SIMA_03}. Based on this expansion, efficient algorithms to
determine the location and some geometric features of the
inclusions were proposed. We refer to \cite{AK04,
ammari_polarization_2007} and the references therein for recent
developments of this theory.

In \cite{AKLZ12}, a recursive optimal control scheme to recover
fine shape details of a given domain using GPTs is proposed. In
\cite{AGKLY11}, it is shown that high-frequency oscillations of
the boundary of a domain are only contained in its high-order
GPTs. Moreover, by developing a level set version of the recursive
optimization scheme, it is also shown that the GPTs can capture
the topology of the domain. An efficient algorithm for computing
the GPTs has been presented in \cite{yves}.

The aim of this paper is to show that the GPTs can be used for
target identification from imaging data. In fact, the GPTs can be
accurately obtained from multistatic measurements by solving a
linear system. Based on this, we design a fast algorithm which
identifies a target using a dictionary of precomputed GPTs data.
We first provide a stability analysis for the reconstruction of
the GPTs in the presence of measurement noise which quantifies the
ill-posedness of the imaging problem. Then, suppose that we have a
dictionary which is a collection of standard shapes (for example
alphabetic letters or flowers). Our aim is to identify from
imaging data a shape which is obtained from one element of the
dictionary after some rotation, scaling and translation. We design
a dictionary matching procedure which operates directly in the
GPTs data. Our procedure is based on some important properties of
the GPTs and new invariants. We test the robustness of our
procedure with respect to a measurement noise in the imaging data.
Our approach is quite natural since it uses geometric quantities
obtained from the imaging data by simply inverting a linear
system. Moreover, there is an infinite number of invariants
associated with the GPTs. Furthermore, for a given dictionary, the
GPT-based representation may lead to better distinguishibility
between the dictionary elements.

Over the last decades, a considerable amount of work has been
devoted to nonlinear optimization techniques for solving the
imaging problem; see, for instance, \cite{opt,opt1,opt2} and the
references therein. More recently, new regularized optimal control
formulations for target imaging have been proposed in
\cite{optnew1,optnew2}. As far as we know, our approach in this
paper provides for the first time an alternative approach to
solving the full inverse problem for target identification and
characterization. It opens a way for real-time target
identification and tracking algorithms in wave imaging.

The paper is organized as follows. In section
\ref{sec:struct-mult-resp}, we introduce a particular linear
combination of the GPTs  to obtain what we call the contracted
GPTs (CGPTs) \cite{AKLL11}. In Section
\ref{sec:reconstr-cgpt-stab}, we investigate the reconstruction of
contracted GPTs, defined in \eqref{defc1}--\eqref{defc2} below,
from the multistatic response matrix of a conductivity problem. We
also consider the effect of the presence of measurement noise in
the MSR on the reconstruction of the CGPTs. Given a
signal-to-noise ratio, we determine  the statistical stability in
the reconstruction of the CGPTs, and show that such inverse
problem is exponentially unstable. This is the well-known
ill-posedness of the inverse conductivity problem. In section
\ref{sec:complex-cgpt-under} it is shown that the CGPTs have some
nice properties, such as simple rotation and translation formulas,
simple relation with shape symmetry, etc. More importantly, we
derive new invariants for the CGPTs. One of the matching
algorithms presented in section \ref{sec:shape-ident-cgpt} is
based on those invariants. Section \ref{sec:numer-exper} presents
a variety of numerical results for the target identification
problem and shows the viability of the proposed procedure.

\section{Structure of the Multistatic Response Matrix}\label{sec:struct-mult-resp}

The first part of this paper is to reconstruct CGPTs from the
multistatic response  (MSR) matrix, which measures the change in
potential field due to a conductivity inclusion. In this section,
we present the mathematical model for MSR and write it in terms of
the CGPTs associated to the conductivity inclusion.

We consider a two dimensional conductivity medium with uniform
conductivity equal to one, except in an inclusion where the
conductivity is $\kappa
> 1$; we denote by $\lambda$ the contrast of this inclusion, that
is,  $\lambda = (\kappa + 1)/(2(\kappa - 1))$. Let $D = z + \delta
B = \{ x = z + \delta y ~|~ y \in B\}$ model the conductivity
inclusion.  Here, $B$ is some $\mathcal{C}^2$ and bounded domain
in $\R^2$ whose typical length scale is of order one; $z$ is a
point in $\R^2$ and is taken here to be an estimation of the
location of the inclusion; $\delta$ is the typical length scale of
the inclusion. We refer to \cite{BHV01, AK04} for efficient
location search algorithms and to \cite{AGJ} for correcting the
effect of measurement noise on the localization procedure.

The MSR matrix is constructed as follows. Let
$\{x_r\}_{r=1}^{N_r}$ and $\{x_s\}_{s=1}^{N_s}$ model a set of
electric potential point detectors and electric point sources. We
assume in this paper that the two sets of locations coincide and
$N_r = N_s = N$. The MSR matrix $\mathbf{V}$ is an $N$-by-$N$
matrix whose $rs$-element is the difference of electric potentials
with and without the conductivity inclusions:
\begin{equation}
V_{rs} = u_s(x_r) - \Gamma_s(x_r), \quad r, s = 1, \ldots, N.
\end{equation}
Here, $\Gamma_s (x) = \Gamma(x-x_s)$ and $\Gamma(x) =
\frac{1}{2\pi} \log |x|$ is the fundamental solution of the
Laplace equation in $\R^2$, and $u_s(x)$ is the solution to the
transmission problem
\begin{equation}
\left\{
\begin{aligned}
\nabla \cdot (1 + (\kappa-1) \chi_{D}) \nabla u_s(x) &= \delta_{x_s}(x),  & & x \in \R^2 \backslash \partial D,\\
u_s (x) \big|_{+} &= u_s (x) \big|_{-}, & & x \in \partial D,\\
\nu_x \cdot (\nabla u_s) \big|_{+} &= \kappa \nu_x \cdot (\nabla u_s) \big|_{-}, & & x \in \partial D,\\
u_s(x) - \Gamma_s (x) & = \mathcal{O}(|x|^{-1}), & & |x-x_s| \to
\infty.
\end{aligned}
\right. \label{eq:transm}
\end{equation}
In the second and third equations above, the notation
$\phi\big|_{\pm}(x)$ denotes the limit $\lim_{t \downarrow 0}
\phi(x\pm t\nu_x)$, where $x \in \partial D$ and $\nu_x$ is the
outward unit normal of $\partial D$ at $x$.

\subsection{The asymptotic expansion of the perturbed potential field}

As modeled above, the MSR matrix characterizes the perturbed
potential field $u_s(x_r) - \Gamma_s(x_r)$. In this section we
recall, from \cite{AK04},  the asymptotic expansion of this
perturbation and some key notions along the way.

Let $\mathcal{S}_D$ be the single layer potential associated with
$D$, that is,
\begin{equation}
\mathcal{S}_D[\phi](x) :  = \int_{\partial D} \Gamma(x-y) \phi(y)
ds(y), \quad x \in \R^2,
\end{equation}
and let $\mathcal{K}_D: L^2(\partial D) \to L^2(\partial D)$
denote the Poincar\'e-Neumann operator
\begin{equation}
\mathcal{K}_D[\phi] (x) : = \frac{1}{2 \pi} \int_{\partial D}
\frac{\langle y - x, \nu_y\rangle}{|x-y|^2} \phi(y) ds(y), \quad x
\in \partial D. \label{eq:Kdef}
\end{equation}
Here, $\langle , \rangle$ denotes the scalar product in $\R^2$ and
$\nu_y$ is the unit normal vector along the boundary at $y$. It is
well known that the single layer potential $\mathcal{S}_D[\phi]$
is a harmonic function satisfying $\mathcal{S}_D[\phi]\big|_- =
\mathcal{S}_D[\phi]\big|_+$ and the jump condition
\begin{equation}
\frac{\partial}{\partial \nu} \mathcal{S}_D[\phi] \Big|_{\pm} =
\left( \pm \frac{1}{2} I + \mathcal{K}_D^*  \right)[\phi],
\label{eq:Sjump}
\end{equation}
where $\mathcal{K}_D^*$ is the adjoint operator of $\mathcal{K}_D$
and it has a similar expression as \eqref{eq:Kdef} with the
numerator of the integrand replaced by $\langle x - y,
\nu_x\rangle$. Using \eqref{eq:Sjump}, we verify that $\Gamma_s(x)
+ \mathcal{S}_D[\phi_s]$ with $\phi_s \in L^2(\partial D)$ solving
\begin{equation}
\left( \lambda I - \mathcal{K}_D^* \right)[\phi_s] =
\frac{\partial \Gamma_s}{\partial \nu} \Big|_{\partial D},
\end{equation}
is a solution to the transmission problem \eqref{eq:transm}. In
fact, this solution is unique and we conclude that
\begin{equation}
u_s(x) - \Gamma_s(x) = \mathcal{S}_D[\phi_s] = \int_{\partial D}
\Gamma(x - y) (\lambda I - \mathcal{K}_D^*)^{-1}
\bigg[\frac{\partial \Gamma_s}{\partial \nu}\Big|_{\partial D}
\bigg](y)  ds(y). \label{eq:dfield}
\end{equation}
To verify the formal derivation above, we refer the reader to
Section 2.4 of \cite{AK04}.

We assume that the inclusion $D$ and the point $z$ is away from
the sources. As a result, the functions $\Gamma(x_r - y)$ and
$\Gamma_s(y)$ are smooth for $y \in \overline{D}$, and the
perturbed field \eqref{eq:dfield} is well defined. For $y \in
\partial D$ and $z$ away from $x$, the $K$-th order Taylor
expansion formula with remainder $e_K$ states
\begin{equation}
\Gamma(x - y) = \Gamma(x - z - (y-z)) = \sum_{|\alpha| = 0}^K
\frac{(-1)^{|\alpha|}}{\alpha!} \partial^{\alpha} \Gamma(x - z)
(y-z)^\alpha + e_K.
\end{equation}
Throughout this section, we use Greek letters to denote double
indices: $\alpha = (\alpha_1, \alpha_2) \in \mathbb{N}^2$,
$\alpha! = \alpha_1 ! \alpha_2 !$ and $|\alpha| = \alpha_1 +
\alpha_2$. Substitution of this expansion into \eqref{eq:dfield}
yields the following expansion of $V_{rs}$ plus an error term
denoted by $E_{rs}$:
\begin{equation*}
V_{rs}= \sum_{|\alpha|,|\beta|=1}^K \frac{(-1)^{|\alpha|}}{\alpha
! \beta !} \partial^\alpha \Gamma(x_r-z) Q_{\alpha\beta}(z)
\partial^\beta \Gamma(z-x_s) + E_{rs},
\end{equation*}
with
$$
Q_{\alpha\beta}(z)= \int_{\partial D} (y-z)^\alpha (\lambda I -
\mathcal{K}_D^*)^{-1} \bigg[\frac{\partial}{\partial \nu} (\cdot
-z)^\beta\bigg](y) ds(y).
$$
The zeroth order term with $\beta = 0$ vanishes because the
differentiation $\partial/\partial \nu$; the zeroth order term
corresponding to $\alpha = 0$ vanishes because $(\lambda I -
\mathcal{K}^*_D)^{-1}$ maps a zero mean value function on
$\partial D$ to another zero mean value function.

For a generic conductivity inclusion $D$ with the contrast factor
$\lambda$, the GPT of order $\alpha\beta$ associated with the
inclusion is defined by
\begin{equation}
M_{\alpha \beta}(\lambda, D) : = \int_{\partial D} y^\beta
(\lambda I - \mathcal{K}_{D}^*)^{-1}[\frac{\partial}{\partial \nu}
y^\alpha] \, ds(y). \label{eq:Mdef}
\end{equation}

Using the change of variable $y - z \mapsto \tilde{y}$, the
integral term $Q_{\alpha\beta}(z)$ inside the expansion of
$V_{rs}$ above can be written as
\begin{equation}
Q_{\alpha\beta}(z) = \int_{\partial (\delta B)} \tilde{y}^\alpha
(\lambda I - \mathcal{K}_{\delta
B}^*)^{-1}[\frac{\partial}{\partial \nu} \tilde{y}^\beta]\,
ds(\tilde{y}),
\end{equation}
which is independant of $z$. Moreover, by the definition of GPT,
this term is $M_{ \beta \alpha}(\lambda, \delta B)$. As a result,
we have\begin{equation} V_{rs} = \sum_{|\alpha|,|\beta|=1}^K
\frac{1}{\alpha ! \beta !}
\partial^\alpha \Gamma(z-x_s) M_{\alpha \beta}(\lambda, \delta B)
\partial^\beta \Gamma(z-x_r) + E_{rs}, \label{eq:Vrsexp}
\end{equation}
where $E_{rs}$ is the truncation error resulted from the finite
expansion. Note also that we have switched the indices $\alpha$
and $\beta$.

The MSR matrix $\mathbf{V}$ consisting of $u_s(x_r) -
\Gamma_s(x_r)$ depends only on the inclusion $(\lambda,D)$.
However, the GPTs involved in the representation \eqref{eq:Vrsexp}
depend on the (non-unique) characterization $(z, \delta B)$ of
$D$. We note that the remainder $e_K$ and the truncation error
$E_{rs}$ can be evaluated; see Appendix \ref{sec:app1}. Moreover,
since the sensors and the receivers coincide, the MSR matrix is
symmetric; see \eqref{eq:Vsym}.

\subsection{Expansion for MSR using contracted GPT}

In this section, we further simplify the expression of MSR using
the notion of contracted GPT (CGPT), which has been introduced in
\cite{AKLL11}. Using CGPT, we can write the MSR matrix
$\mathbf{V}$ as a product of  a CGPT matrix with coefficient
matrices, which is a very convenient form for inversion.

Let $P_m(x)$ be the complex valued polynomial
\begin{equation}
P_m(x) = (x_1 + ix_2)^m := \sum_{|\alpha| = m} a^m_\alpha x^\alpha
+ i\sum_{|\beta| = m} b^m_\beta x^\beta. \label{eq:Pdef}
\end{equation}
Using polar coordinate $x = re^{i\theta}$, the above coefficients
$a^m_\alpha$ and $b^m_\beta$ can also be characterized by
\begin{equation}
\sum_{|\alpha| = m} a^m_\alpha x^\alpha = r^m \cos m\theta, \text{
and } \sum_{|\alpha| = m} b^m_\alpha x^\beta = r^m \sin m\theta.
\label{eq:abcomp}
\end{equation}
For a generic conductivity inclusion $D$ with contrast $\lambda$,
the associated GPT $M_{\alpha \beta}(\lambda, D)$ is defined as in
\eqref{eq:Mdef}. The associated CGPT is the following combination
of GPTs using the coefficients in \eqref{eq:Pdef}:
\begin{align}
M^{cc}_{mn} = \sum_{|\alpha| = m} \sum_{|\beta| = n} a^m_\alpha a^n_\beta M_{\alpha \beta}, \label{defc1}\\
M^{cs}_{mn} = \sum_{|\alpha| = m} \sum_{|\beta| = n} a^m_\alpha b^n_\beta M_{\alpha \beta},\\
M^{sc}_{mn} = \sum_{|\alpha| = m} \sum_{|\beta| = n} b^m_\alpha a^n_\beta M_{\alpha \beta},\\
M^{ss}_{mn} = \sum_{|\alpha| = m} \sum_{|\beta| = n} b^m_\alpha
b^n_\beta M_{\alpha \beta}. \label{defc2}
\end{align}

Using the complex coordinate $x = r_x e^{i\theta_x}$, we have (see
Appendix \ref{sec:app2}) that
\begin{equation}
\frac{(-1)^{|\alpha|}}{\alpha!} \partial^{\alpha} \Gamma(x) =
\frac{-1}{2\pi |\alpha|} \left[ a^{|\alpha|}_\alpha \frac{\cos
|\alpha| \theta_x}{r^{|\alpha|}_x}  + b^{|\alpha|}_\alpha
\frac{\sin |\alpha| \theta_x}{r^{|\alpha|}_x} \right].
\label{eq:DGamma}
\end{equation}
Recall that $\{x_r\}_{r=1}^N$ and $\{x_s\}_{s=1}^N$ denote the
locations of the receivers and electric sources. Define $R_r$ and
$\theta_r$ so that the complex representation of $x_r - z$ is $R_r
e^{i\theta_r}$ with $z$ being the location of the target.
Similarly define $R_s$ and $\theta_s$. Substituting formula
\eqref{eq:DGamma} into the expression \eqref{eq:Vrsexp} of the
MSR, we get
\begin{equation}
\begin{aligned}
V_{rs} &= \sum_{|\alpha|= 1, |\beta|=1}^{K}
\frac{a^{|\alpha|}_\alpha \cos |\alpha| \theta_s +
b^{|\alpha|}_\alpha \sin |\alpha| \theta_s}{2\pi |\alpha|
R_s^{|\alpha|}}
 M_{\alpha \beta} (\lambda, \delta B) \frac{a^{|\beta|}_\beta \cos |\beta| \theta_r + b^{|\beta|}_\beta \sin |\beta| \theta_r}{2\pi |\beta| R_r^{|\beta|}} + E_{rs}\\
&= \sum_{m,n=1}^K \underbrace{
\begin{pmatrix} \displaystyle
\frac{\cos m\theta_s}{2\pi m R_s^m} & \displaystyle \frac{\sin
m\theta_s}{2\pi m R_s^m} \end{pmatrix}}_{\mathbf{A}_{sm}}
\underbrace{\begin{pmatrix}
M^{cc}_{mn} & M^{cs}_{mn} \\
M^{sc}_{mn} & M^{ss}_{mn}
\end{pmatrix}}_{\mathbf{M}_{mn}}
\underbrace{\begin{pmatrix}
\cos n\theta_r \\
\sin n\theta_r
\end{pmatrix}
\frac{1}{2\pi n R_r^n}}_{(\mathbf{A}_{rn})^t} + E_{rs}.
\end{aligned}
\label{eq:Vrsform}
\end{equation}
Here, the short-hand notations $\Mcgpt_{mn}$ and $\mathbf{A}_{sm}$
represent the two-by-two and one-by-two  matrices respectively,
and $(\mathbf{A}_{rn})^t$ is the transpose. As $m,n$ run from one
to $K$, which is the truncation order of CGPT, and $r,s$ run from
one to $N$, which is the number of receivers (sources), these
matrices build up the $2K \times 2K$ CGPT block matrix $\Mcgpt$
and the $N \times 2K$ coefficient matrix $\mathbf{A}$ as follows:
\begin{equation}
\Mcgpt = \begin{pmatrix}
\Mcgpt_{11} & \Mcgpt_{12} & \cdots & \Mcgpt_{1K}\\
\Mcgpt_{21} & \Mcgpt_{22} & \cdots & \Mcgpt_{2K}\\
\cdots & \cdots & \ddots & \cdots\\
\Mcgpt_{K1} & \Mcgpt_{K2} & \cdots & \Mcgpt_{KK}
\end{pmatrix};
\Acoef = \begin{pmatrix}
\Acoef_{11} & \Acoef_{12} & \cdots & \Acoef_{1K}\\
\Acoef_{21} & \Acoef_{22} & \cdots & \Acoef_{2K}\\
\cdots & \cdots & \ddots & \cdots\\
\Acoef_{N1} & \Acoef_{N2} & \cdots & \Acoef_{NK}
\end{pmatrix}.
\label{eq:Mcgpt}
\end{equation}

Using these notations, the MSR matrix $\mathbf{V}$ can be written
as
\begin{equation}
\mathbf{V} = \Acoef  \Mcgpt \Acoef^t+ \mathbf{E}, \label{eq:Vexp}
\end{equation}
where $\Acoef^t$ denotes the transpose of $\Acoef$ and the matrix
$\mathbf{E} = (E_{rs})$ represents the truncation error. We
precise again that the CGPT above is for the ``shifted" inclusion
$\delta B$. We note also that the dimension of $\mathbf{V}$
depends on the number of sources/receivers but does not depend on
the expansion order $K$ in \eqref{eq:Vrsexp}.

Due to the symmetry of harmonic combination of GPTs
\cite{ammari_polarization_2007}, the matrix $\Mcgpt$ is symmetric.
Since $\mathbf{V}$ is symmetric as shown in \eqref{eq:Vsym}, the
truncation error $\mathbf{E}$ is also symmetric.

\section{Reconstruction of CGPTs and Stability Analysis}\label{sec:reconstr-cgpt-stab}

The first step in the target identification procedure is to
reconstruct CGPTs from the MSR matrix $\mathbf{V}$,
which has expression \eqref{eq:Vexp}. 
Define the linear operator $L: \R^{2K \times 2K} \to \R^{N\times
N}$ by
\begin{equation}
L(\Mcgpt) := \Acoef \Mcgpt \Acoef^t. \label{eq:linsys}
\end{equation}
We reconstruct CGPTs as the least squares solution of the above
linear system, {\it i.e.},
\begin{equation}
\Mcgpt^{\mathrm{est}} = \min_{\Mcgpt^{\mathrm{test}} \perp
\mathrm{ker } (L)} \| \mathbf{V} - L(\Mcgpt^{\mathrm{test}})
\|_{F}, \label{eq:lsqr}
\end{equation}
where $\mathrm{ker } (L)$ denotes the kernel of $L$ and
$\|\cdot\|_F$ denotes the Frobenius norm of matrices \cite{LH95}.
In general we take $N$ large enough so that $2K <N$. When $\Acoef$
has full rank $2K$, $L$ is rank preserving and $\mathrm{ker }(L)$
is trivial; in that case, the admissible set above can be replaced
by $\R^{2K\times 2K}$ and $$ \Mcgpt =   (\Acoef^t \Acoef)^{-1}
\Acoef^t \mathbf{V} \Acoef (\Acoef^t \Acoef)^{-1}.$$

From the structure of the matrix $\Acoef$ in \eqref{eq:Mcgpt} and
the expression of the MSR matrix, we observe that the contribution
of a CGPT decays as its order grows. Consequently, one does not
expect the inverse procedure to be stable for higher order CGPTs.
The remainder of this section is devoted to such stability
analysis.

\subsection{Analytical formula in the concentric setting}

To simplify the analysis, we assume that the receivers (sources)
are evenly distributed along a circle of radius $R$ centered at
$z$. That is, $\theta_r = 2\pi r/N$, $r = 1, 2, \ldots, N$, and
$R_r = R$. In this setting, we have $\Acoef = \Ccoef \Dcoef$,
where $\Ccoef$ is an $N \times 2K$ matrix constructed from the
block $\Ccoef_{rm} = (\cos m \theta_r\ \sin m\theta_r)$ and
$\Dcoef$ is $2K \times 2K$ diagonal matrix:
\begin{equation*}
\Ccoef = \begin{pmatrix}
\Ccoef_{11} & \Ccoef_{12} & \cdots & \Ccoef_{1K}\\
\Ccoef_{21} & \Ccoef_{22} & \cdots & \Ccoef_{2K}\\
\cdots & \cdots & \ddots & \cdots\\
\Ccoef_{N1} & \Ccoef_{N2} & \cdots & \Ccoef_{NK}
\end{pmatrix};
\Dcoef = \frac{1}{2\pi} \begin{pmatrix}
  \mathbf{I}_2/R&  &  &  \\
  &   \mathbf{I}_2/(2 R^2) &   &  \\
  &   & \ddots &  \\
  &   &   & \mathbf{I}_2/(K R^K)
\end{pmatrix}.
\end{equation*}
Here $\mathbf{I}_2$ is the $2\times 2$ identity matrix. We note
that $\Ccoef$ and $\Dcoef$ account for the angular and radial
coefficients in the expansion of MSR, respectively. The matrix
$\Ccoef$ satisfies the following important property; see Appendix
\ref{sec:app3}.

\begin{proposition} Suppose that $2K < N$ holds. Then
\begin{equation}
\Ccoef^t \Ccoef = \frac{N}{2} \mathbf{I}_{2K}. \label{eq:ortho}
\end{equation}
\end{proposition}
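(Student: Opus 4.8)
The matrix $\Ccoef^t\Ccoef$ is naturally partitioned into $K\times K$ blocks of size $2\times 2$, the $(m,n)$ block being
\[
(\Ccoef^t\Ccoef)_{mn}=\sum_{r=1}^{N}\Ccoef_{rm}^{\,t}\Ccoef_{rn}
=\sum_{r=1}^{N}\begin{pmatrix}\cos m\theta_r\\ \sin m\theta_r\end{pmatrix}\begin{pmatrix}\cos n\theta_r & \sin n\theta_r\end{pmatrix}
=\begin{pmatrix} c_{mn} & d_{mn}\\ d_{nm} & s_{mn}\end{pmatrix},
\]
where $c_{mn}=\sum_r\cos m\theta_r\cos n\theta_r$, $s_{mn}=\sum_r\sin m\theta_r\sin n\theta_r$, and $d_{mn}=\sum_r\cos m\theta_r\sin n\theta_r$, all with $\theta_r=2\pi r/N$. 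So the claim \eqref{eq:ortho} reduces to showing $c_{mn}=s_{mn}=\tfrac{N}{2}\delta_{mn}$ and $d_{mn}=0$ for all $1\le m,n\le K$.

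The plan is to reduce every one of these sums to the elementary geometric-sum identity
\[
\sum_{r=1}^{N}e^{2\pi i k r/N}=\begin{cases}N,& N\mid k,\\ 0,& N\nmid k.\end{cases}
\]
Indeed, the product-to-sum formulas give $\cos m\theta_r\cos n\theta_r=\tfrac12\big(\cos((m-n)\theta_r)+\cos((m+n)\theta_r)\big)$, and similarly for $s_{mn}$ and $d_{mn}$, so each of $c_{mn},s_{mn},d_{mn}$ is a linear combination of the real or imaginary parts of $\sum_r e^{i(m\pm n)\theta_r}=\sum_r e^{2\pi i(m\pm n)r/N}$. First I would record that these latter sums vanish unless $N\mid(m\pm n)$.

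The only place the hypothesis $2K<N$ enters is in deciding which shifts $m\pm n$ are multiples of $N$: since $2\le m+n\le 2K<N$, the index $m+n$ is never a nonzero multiple of $N$, hence the "$m+n$" contributions always vanish; and since $|m-n|\le K-1<N$, the index $m-n$ is a multiple of $N$ if and only if $m=n$. Plugging this in, when $m\ne n$ both the sum and difference parts vanish, giving $c_{mn}=s_{mn}=d_{mn}=0$; when $m=n$ the difference part of $c$ and $s$ contributes $\tfrac12\cdot N$ while $d_{mm}=\tfrac12\sum_r\sin(2m\theta_r)=0$ again because $2\le 2m\le 2K<N$. This yields exactly $(\Ccoef^t\Ccoef)_{mn}=\tfrac{N}{2}\delta_{mn}\mathbf{I}_2$, i.e. \eqref{eq:ortho}.

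There is no real obstacle here beyond careful bookkeeping; the one point that must not be glossed over is the case analysis on $m\pm n$ versus $N$, which is precisely where $2K<N$ is used and where an off-by-one error (e.g. forgetting that $m+n$ could a priori reach $2K$) would break the argument. I would present it as the short computation above, possibly relegating the trigonometric product-to-sum bookkeeping to Appendix \ref{sec:app3} as the paper already signals.
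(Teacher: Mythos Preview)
Your proposal is correct and follows essentially the same route as the paper's Appendix~\ref{sec:app3}: both reduce the entries of $\Ccoef^t\Ccoef$ to geometric sums $\sum_{r=1}^N e^{2\pi i(m\pm n)r/N}$ and invoke $2K<N$ to rule out nontrivial divisibility by $N$. Your block bookkeeping and explicit treatment of the cross term $d_{mn}$ are slightly more systematic than the paper's ``the other cases are similar,'' but the argument is the same.
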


Henceforth, we assume that the number of receivers is large enough
so that $2K < N$. In this setting, the least squares problem
\eqref{eq:lsqr} admits an analytical expression as follows.

\begin{lemma}\label{lem:inv} In the above concentric setting with sufficiently many receivers, {\it i.e.},
 $2K < N$, the least squares estimation \eqref{eq:lsqr} is given by
\begin{equation}
\Mcgpt^{\mathrm{est}} = (\frac{2}{N})^2 \Dcoef^{-1} \Ccoef^t
\mathbf{V} \Ccoef \Dcoef^{-1}. \label{eq:Mest}
\end{equation}
\end{lemma}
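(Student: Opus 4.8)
The plan is to recognize \eqref{eq:lsqr} as an \emph{unconstrained} least squares problem once $\ker(L)$ is shown to be trivial, to write down the associated normal equation, and then to solve it explicitly using the orthogonality relation \eqref{eq:ortho}.

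First I would verify that $\Acoef = \Ccoef\Dcoef$ has full column rank $2K$: the matrix $\Dcoef$ is diagonal with nonzero diagonal entries, hence invertible, and by \eqref{eq:ortho} the Gram matrix $\Ccoef^t\Ccoef = \frac{N}{2}\mathbf{I}_{2K}$ is nonsingular, so $\Ccoef$ has rank $2K$. Consequently $\Acoef$ admits the left inverse $\Acoef^{+} := (\Acoef^t\Acoef)^{-1}\Acoef^t$, and $\Acoef\,\Mcgpt\,\Acoef^t = 0$ forces $\Mcgpt = \Acoef^{+}(\Acoef\Mcgpt\Acoef^t)(\Acoef^{+})^t = 0$. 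Thus $\ker(L)$ is trivial, the side constraint $\Mcgpt^{\mathrm{test}} \perp \ker(L)$ in \eqref{eq:lsqr} is vacuous, and $\Mcgpt^{\mathrm{est}}$ is the unique global minimizer of $\Mcgpt^{\mathrm{test}} \mapsto \|\mathbf{V} - L(\Mcgpt^{\mathrm{test}})\|_F$ over $\R^{2K\times 2K}$, characterized by the normal equation $L^{*}L(\Mcgpt^{\mathrm{est}}) = L^{*}(\mathbf{V})$.

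Next I would identify the adjoint $L^{*}$ for the Frobenius inner product: by the cyclic invariance of the trace,
$$\langle L(\Mcgpt),\mathbf{W}\rangle_F = \mathrm{tr}\big(\Acoef\,\Mcgpt^t\,\Acoef^t\,\mathbf{W}\big) = \mathrm{tr}\big(\Mcgpt^t\,(\Acoef^t\mathbf{W}\Acoef)\big) = \langle \Mcgpt,\ \Acoef^t\mathbf{W}\Acoef\rangle_F,$$
so $L^{*}(\mathbf{W}) = \Acoef^t\mathbf{W}\Acoef$ and the normal equation reads $\Acoef^t\Acoef\,\Mcgpt^{\mathrm{est}}\,\Acoef^t\Acoef = \Acoef^t\mathbf{V}\Acoef$. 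Finally, substituting $\Acoef = \Ccoef\Dcoef$ with $\Dcoef^t = \Dcoef$ and using \eqref{eq:ortho} gives $\Acoef^t\Acoef = \frac{N}{2}\Dcoef^2$ and $\Acoef^t\mathbf{V}\Acoef = \Dcoef\,\Ccoef^t\mathbf{V}\Ccoef\,\Dcoef$, so the normal equation becomes $\big(\frac{N}{2}\big)^2\Dcoef^2\,\Mcgpt^{\mathrm{est}}\,\Dcoef^2 = \Dcoef\,\Ccoef^t\mathbf{V}\Ccoef\,\Dcoef$; multiplying by $\Dcoef^{-2}$ on the left and on the right yields precisely \eqref{eq:Mest}. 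The only point that needs genuine care is the verification that the side constraint is vacuous (the full-rank argument above); everything after that is a direct, if slightly index-heavy, matrix computation with no analytic obstacle.
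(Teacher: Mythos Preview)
Your proof is correct and follows essentially the same route as the paper: both show that $\Acoef=\Ccoef\Dcoef$ has full column rank via \eqref{eq:ortho}, hence $\ker(L)=\{0\}$, and then reduce to the closed-form least-squares solution $\Mcgpt^{\mathrm{est}}=(\Acoef^t\Acoef)^{-1}\Acoef^t\mathbf{V}\Acoef(\Acoef^t\Acoef)^{-1}$ with $(\Acoef^t\Acoef)^{-1}=\tfrac{2}{N}\Dcoef^{-2}$. The only difference is cosmetic: you derive that closed form through the normal equation $L^{*}L(\Mcgpt^{\mathrm{est}})=L^{*}(\mathbf{V})$ after computing $L^{*}$, whereas the paper simply invokes the formula already stated just before the lemma.
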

\begin{proof} Firstly, \eqref{eq:ortho} implies that $\Acoef$ has full rank,
so $\mathrm{ker }(L) =\{0\}$. Moreover, $$ (\Acoef^t \Acoef)^{-1}
= \frac{2}{N} \Dcoef^{-2}.$$ Hence,
$$
\Mcgpt^{\mathrm{est}} = (\frac{2}{N})^2 \Dcoef^{-2} \Dcoef
\Ccoef^t \mathbf{V} \Ccoef \Dcoef \Dcoef^{-2},
$$
which yields (\ref{eq:Mest}).
\end{proof}

\subsection{Measurement noise and stability analysis}

We develop in the rest of this section a stability analysis for
the least squares reconstruction of CGPT from the MSR matrix, in
the setting of concentric receivers (sources).

Counting some additive measurement noise, we modify the expression
of MSR to
\begin{equation}
\mathbf{V} = \Ccoef \Dcoef \Mcgpt \Dcoef \Ccoef^t + \mathbf{E} +
\sigma_{\mathrm{noise}} \mathbf{W}. \label{eq:Vmodel}
\end{equation}
Here, $\mathbf{E}$ is the truncation error due to the finite order
$K$ in expansion \eqref{eq:Vrsexp}, $\mathbf{W}$ is an $N \times
N$ real valued random matrix with independent and identically
Gaussian entries with mean zero and unit variance, and
$\sigma_{\mathrm{noise}}$ is a small positive number modeling the
standard deviation of the noise.

Recall that the unknown $\Mcgpt$ consists of CGPTs of order up to
$K$ of the relative domain $\delta B = D - z$, where $\delta$
denote the typical length scale of the domain $D$. The receivers
and sources are located along a circle of radius $R$ centered at
$z$. Let $\eps = \delta/R$ be the ratio between the two scales,
and it is assumed to be smaller than one. Due to the scaling
property of CGPT (see (\ref{eq:CGPT_scl_Nt})), the entries of the
CGPT block $\Mcgpt_{mn}(\delta B)$ is
$\delta^{m+n}\Mcgpt_{mn}(B)$. Consequently, the size of
$\mathbf{V}$ itself is of order $\eps^2$, which is the order of
the first term in the expansion \eqref{eq:Vrsform}. The truncation
error $\mathbf{E}$ is of order $\eps^{K+2}$; see Appendix
\ref{sec:app1}.

According to the above analysis, we assume that the size of the
noise satisfies
\begin{equation}
N \eps^{K+2} \ll \sigma_{\mathrm{noise}} \ll \eps^2.
\label{eq:nregime}
\end{equation}
This is the regime where the measurement noise is much smaller
than the signal but much larger than the truncation error. The
presence of $N$ in (\ref{eq:nregime}) will be clear later; see
remark \ref{rem:whyN}. We define the signal-to-noise ratio (SNR)
to be
$$
\SNR = \frac{\eps^2}{\sigma_{\mathrm{noise}}}.
$$
We will investigate the error made by the least squares estimation
of the CGPT matrix, in particular the manner of its growth with
respect to the order of the CGPTs. Given a $\SNR$ and a tolerance
number $\tau_0$, we can define the resolving order $m_0$ to be
\begin{equation}
m_0 = \min \left\{ 1 \le m \le K ~:~ \sqrt{\frac{\E
\|\Mcgpt^{\mathrm{est}}_{mm} -
\Mcgpt_{mm}\|^2_F}{\|\Mcgpt_{mm}\|^2_F}} \le \tau_0 \right\}.
\label{eq:msdef}
\end{equation}
We are interested in the growth of $m_0$ with respect to $\SNR$.

We have used the notation $\Mcgpt_{mn}$, $m,n=1, \ldots, K$, to
denote the building block of the CGPT matrix $\Mcgpt$ in
(\ref{eq:Mcgpt}). In the following, we also use the notation
$(\Mcgpt)_{jk}$, $j,k=1, \ldots, 2K$, to denote the real valued
entries of the CGPT matrix.

\begin{theorem} Assume that the condition of Lemma \ref{lem:inv} holds; assume also that the additive noise is in the regime \eqref{eq:nregime}, Then for $j,k$ so that $(\Mcgpt)_{jk}$ is non-zero, the relative error in its reconstructed CGPT satisfies
\begin{equation}
\sqrt{\frac{\E |(\Mcgpt^{\mathrm{est}})_{jk} -
(\Mcgpt)_{jk}|^2}{|(\Mcgpt)_{jk}|^2}} \le C
\frac{\sigma_{\mathrm{noise}}}{N} \eps^{-\lceil j/2 \rceil -
\lceil k/2 \rceil} \left\lceil \frac{j}{2} \right\rceil
\left\lceil \frac{k}{2} \right\rceil. \label{eq:rerrM}
\end{equation}
Here, the symbol $\lceil l \rceil$ is the smallest natural number
larger than or equal to $l$. For vanishing $(\Mcgpt)_{jk}$, the
error $\sqrt{\E  |(\Mcgpt^{\mathrm{est}})_{jk} -
(\Mcgpt)_{jk}|^2}$ can be bounded by the right-hand side above
with $\eps$ replaced by $R^{-1}$. In particular, the resolving
order $m_0$ satisfies
\begin{equation}
(m_0 \eps^{1-m_0})^2 \simeq \tau_0 \SNR, \label{eq:snrest}
\end{equation}
where $\tau_0$ is the tolerance number.
\end{theorem}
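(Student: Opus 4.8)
The plan is to start from the closed-form estimate of Lemma~\ref{lem:inv}, insert the noisy model \eqref{eq:Vmodel}, and use the orthogonality relation \eqref{eq:ortho}. Since $(\tfrac{2}{N})^{2}\Dcoef^{-1}\Ccoef^{t}\big(\Ccoef\Dcoef\Mcgpt\Dcoef\Ccoef^{t}\big)\Ccoef\Dcoef^{-1}=\Mcgpt$, the reconstruction error splits as
\begin{equation*}
\Mcgpt^{\mathrm{est}}-\Mcgpt=\Big(\tfrac{2}{N}\Big)^{2}\Dcoef^{-1}\Ccoef^{t}\mathbf{E}\,\Ccoef\Dcoef^{-1}+\sigma_{\mathrm{noise}}\Big(\tfrac{2}{N}\Big)^{2}\Dcoef^{-1}\Ccoef^{t}\mathbf{W}\Ccoef\Dcoef^{-1},
\end{equation*}
a deterministic truncation part plus a mean-zero random part. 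Because $\Dcoef^{-1}$ is diagonal with $j$-th entry $2\pi\lceil j/2\rceil R^{\lceil j/2\rceil}$ and $\|\Ccoef\|_{2}=\sqrt{N/2}$, the $(j,k)$ entry of the truncation part is $O\big(\lceil j/2\rceil\lceil k/2\rceil R^{\lceil j/2\rceil+\lceil k/2\rceil}\,\|\mathbf{E}\|_{2}/N\big)$; with $\|\mathbf{E}\|_{2}\le\|\mathbf{E}\|_{F}\lesssim N\eps^{K+2}$ from Appendix~\ref{sec:app1} and the regime \eqref{eq:nregime}, this is of strictly smaller order than the random part, so it suffices to estimate the second moment of the random part.

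For that, let $c_{\ell}$ be the $\ell$-th column of $\Ccoef$, so that $(\Ccoef^{t}\mathbf{W}\Ccoef)_{jk}=c_{j}^{t}\mathbf{W}c_{k}=\sum_{r,s}(c_{j})_{r}W_{rs}(c_{k})_{s}$. Since the $W_{rs}$ are independent with mean zero and unit variance, $\E\big|(\Ccoef^{t}\mathbf{W}\Ccoef)_{jk}\big|^{2}=\|c_{j}\|^{2}\|c_{k}\|^{2}=(N/2)^{2}$ by \eqref{eq:ortho} (this already covers $j=k$). Multiplying by the two diagonal weights of $\Dcoef^{-1}$ and the scalar $(\tfrac{2}{N})^{2}\sigma_{\mathrm{noise}}$ gives, up to the negligible truncation contribution,
\begin{equation*}
\sqrt{\E\big|(\Mcgpt^{\mathrm{est}})_{jk}-(\Mcgpt)_{jk}\big|^{2}}\;\approx\;(2\pi)^{2}\,\frac{2\sigma_{\mathrm{noise}}}{N}\,\lceil j/2\rceil\,\lceil k/2\rceil\,R^{\lceil j/2\rceil+\lceil k/2\rceil}.
\end{equation*}
When $(\Mcgpt)_{jk}\neq 0$ I divide by $|(\Mcgpt)_{jk}|$ and use the scaling property recalled before the statement, namely $(\Mcgpt)_{jk}=\delta^{\lceil j/2\rceil+\lceil k/2\rceil}(\Mcgpt(B))_{jk}$, which turns $R^{\lceil j/2\rceil+\lceil k/2\rceil}/|(\Mcgpt)_{jk}|$ into $\eps^{-\lceil j/2\rceil-\lceil k/2\rceil}/|(\Mcgpt(B))_{jk}|$; bounding the finitely many nonzero $|(\Mcgpt(B))_{jk}|$ below by a shape-dependent constant yields \eqref{eq:rerrM}. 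For a vanishing entry there is nothing to normalise by; the displayed absolute bound, which carries $R^{\lceil j/2\rceil+\lceil k/2\rceil}=(R^{-1})^{-\lceil j/2\rceil-\lceil k/2\rceil}$, is precisely the right-hand side of \eqref{eq:rerrM} with $\eps$ replaced by $R^{-1}$.

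For the resolving order I apply the estimate to the diagonal block $\Mcgpt_{mm}$, all four of whose entries have $\lceil j/2\rceil=\lceil k/2\rceil=m$: summing the four equal second moments and dividing by $\|\Mcgpt_{mm}\|_{F}^{2}=\delta^{4m}\|\Mcgpt_{mm}(B)\|_{F}^{2}$ shows that the relative block error in \eqref{eq:msdef} is, up to constants depending on $B$ and $N$, of order $(\sigma_{\mathrm{noise}}/N)\,m^{2}\eps^{-2m}$. Requiring this to equal $\tau_0$ at $m=m_0$ and multiplying by $\eps^{2}$ gives $(m_0\eps^{1-m_0})^{2}\simeq\tau_0\,\eps^{2}/\sigma_{\mathrm{noise}}=\tau_0\,\SNR$, which is \eqref{eq:snrest}; since $m_0$ enters only polynomially while $\eps^{-2m_0}$ grows geometrically, this is the announced exponential instability. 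I expect the difficulty to be bookkeeping rather than conceptual: one must keep straight the radial weights of $\Dcoef^{-1}$, the scaling exponents $\lceil j/2\rceil+\lceil k/2\rceil$, and the precise sense in which the constants (in particular $N$ and the shape factors $|(\Mcgpt(B))_{jk}|$) are absorbed into $C$ and into the heuristic $\simeq$ of \eqref{eq:snrest}; and one must verify once and for all that in the regime \eqref{eq:nregime}—this is where the factor $N$ there is used—the truncation matrix $\mathbf{E}$ never competes with the noise.
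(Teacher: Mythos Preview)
Your proposal is correct and follows essentially the same route as the paper: split $\Mcgpt^{\mathrm{est}}-\Mcgpt$ into a truncation part and a noise part via Lemma~\ref{lem:inv} and \eqref{eq:Vmodel}, compute the second moment of the noise part using $\E\big|c_j^{t}\mathbf{W}c_k\big|^{2}=\|c_j\|^{2}\|c_k\|^{2}=(N/2)^{2}$, invoke the scaling $(\Mcgpt)_{jk}(\delta B)=\delta^{\lceil j/2\rceil+\lceil k/2\rceil}(\Mcgpt)_{jk}(B)$ to pass to the relative error, and then specialise to the diagonal block $\Mcgpt_{mm}$ for \eqref{eq:snrest}. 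The only cosmetic difference is in handling the truncation term: the paper diagonalises the symmetric matrix $\mathbf{E}=\mathbf{P}^{t}\mathcal{E}\mathbf{P}$ and bounds $\sum_{l}\mathcal{E}_{ll}\mathbf{Q}_{jl}^{t}\mathbf{Q}_{lk}$, whereas you use the equivalent (and slightly cleaner) bilinear bound $|c_j^{t}\mathbf{E}\,c_k|\le\|c_j\|\,\|\mathbf{E}\|_{2}\,\|c_k\|$ with $\|\mathbf{E}\|_{2}\le\|\mathbf{E}\|_{F}\lesssim N\eps^{K+2}$; both lead to the same conclusion that under \eqref{eq:nregime} this contribution is dominated by the noise.
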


\begin{proof}
From the analytical formula of the least squares reconstruction
\eqref{eq:Mest} and the expression of $\mathbf{V}$
\eqref{eq:Vmodel}, we see that
for each fixed $j,k = 1, \ldots, 2K$,
\begin{equation*}
(\Mcgpt^{\mathrm{est}} - \Mcgpt)_{jk} =  \frac{2^2
\sigma_{\mathrm{noise}}}{N^2} (\Dcoef^{-1} \Ccoef^t \mathbf{W}
\Ccoef \Dcoef^{-1})_{jk} + \frac{2^2}{N^2}(\Dcoef^{-1} \Ccoef^t
\mathbf{E} \Ccoef \Dcoef^{-1})_{jk}.
\end{equation*}

Let us denote these two terms by $\mathcal{I}_{jk1}$ and
$\mathcal{I}_{jk2}$ respectively. For the first term, define
$\widetilde{\mathbf{W}}$ to be $(\sqrt{2/N}\Ccoef)^t \mathbf{W}
(\sqrt{2/N}\Ccoef)$, which is an $N \times N$ random matrix. Due
to the orthogonality \eqref{eq:ortho}, $\widetilde{\mathbf{W}}$
remains to have mean zero Gaussian entries with unit variance.
Because $\Dcoef$ is diagonal, we have for each $j,k = 1, \ldots,
2K$,
\begin{equation*}
\E (\mathcal{I}_{jk1})^2 =
\frac{2^2\sigma^2_{\mathrm{noise}}}{N^2} (\Dcoef_{jj})^{-2} \E
|\widetilde{\mathbf{W}}_{jk}|^2 (\Dcoef_{kk})^{-2} =
\frac{2^6\pi^4 \sigma^2_{\mathrm{noise}}}{N^2} R^{2(\lceil j/2
\rceil + \lceil k/2 \rceil)} \left\lceil \frac{j}{2} \right
\rceil^2 \left\lceil \frac{k}{2} \right\rceil^2.
\end{equation*}
Note that $\lceil j/2 \rceil \lceil k/2 \rceil$ is the order of
CGPT element $(\Mcgpt)_{jk}$; see \eqref{eq:Mcgpt}. It is known
that $(\Mcgpt)_{jk}(\delta B) = \delta^{\lceil j/2 \rceil + \lceil
k/2 \rceil} (\Mcgpt)_{jk}(B)$. When this term is non-zero, it is
of order $\delta^{\lceil j/2 \rceil + \lceil k/2 \rceil}$. This
fact and the above control of $\mathcal{I}_{jk1}$ show that
$\sqrt{\E |\mathcal{I}_{jk1}|^2/|(\Mcgpt)_{jk}|^2}$ satisfies the
estimate in \eqref{eq:rerrM}.

For the second term, since $\mathbf{E}$ is symmetric, it has the
decomposition $\mathbf{E} = \mathbf{P}^t \mathcal{E} \mathbf{P}$,
where $\mathbf{P}$ is an $N \times N$ orthonormal matrix, and
$\mathcal{E}$ is an $N \times N$ diagonal matrix consisting of
eigenvalues of $\mathbf{E}$. Then $(\sqrt{2/N}\Ccoef)^t \mathbf{E}
(\sqrt{2/N}\Ccoef)$ can be written as $\mathbf{Q}^t \mathcal{E}
\mathbf{Q}$ where $\mathbf{Q} = \sqrt{2/N} \mathbf{P} \Ccoef$ is
an $N \times 2K$ matrix satisfying $\mathbf{Q}^t \mathbf{Q} =
\mathbf{I}_{2K}$. Then the calculation for $\mathcal{I}_{jk1}$
shows that
\begin{equation*}
(\mathcal{I}_{jk2})^2  = \frac{2^6\pi^4}{N^2} R^{2(\lceil j/2
\rceil + \lceil k/2 \rceil)} \left\lceil \frac{j}{2} \right
\rceil^2 \left\lceil \frac{k}{2} \right\rceil^2 \left(\sum_{l=1}^N
\mathcal{E}_{ll} \mathbf{Q}^t_{jl} \mathbf{Q}_{lk} \right)^2.
\end{equation*}
Since $\mathbf{E}$ is of order $\eps^{K+2}$ as shown in
\eqref{eq:prop:Ers}, the sum is of order $N\eps^{K+2}$. Therefore,
we have
$$\sqrt{\E |\mathcal{I}_{jk2}|^2} \le C\eps^{K+2 -\lceil j/2 \rceil
- \lceil k/2 \rceil} \lceil \frac{j}{2}\rceil \lceil \frac{k}{2}
\rceil .$$ Since we assumed that \eqref{eq:nregime} holds, this
error is dominated by the one due to the noise. Hence,
\eqref{eq:rerrM} is proved.

For diagonal blocks $\Mcgpt_{mm}$, their Frobenius norms do not
vanish and \eqref{eq:msdef} is well defined. In particular,
\eqref{eq:rerrM} applied to the case $j,k = 2m-1, 2m$, shows that
the relative error made in the block $\Mcgpt_{mm}$ is of order
$\sigma_{\mathrm{noise}} m^2 \eps^{-2m}$. Using the definition of
$\SNR$, we verify \eqref{eq:snrest}.
\end{proof}

\begin{remark} \label{rem:whyN}
If $\mathbf{E}$ has only several (of order one) non-zero
eigenvalues, then the preceding calculation shows that
$(\mathcal{I}_{jk2})^2 \le C\eps^{2(K+2)}$ and condition
\eqref{eq:nregime} can be replaced with $\eps^{K+2} \ll
\sigma_{\mathrm{noise}} \ll \eps^2$.
\end{remark}

\section{Complex CGPTs under Rigid Motions and Scaling}\label{sec:complex-cgpt-under}
As we will see later, a complex combination of CGPTs is most
convenient when we consider the transforms of CGPTs under
dilatation and rigid motions, {\it i.e.}, shift and rotation.
Therefore, for a double index $mn$, with $m,n=1,2,\ldots$, we
introduce the following complex combination of CGPTs:
\begin{equation}
\begin{aligned}
\Mcc^{(1)}_{mn}(\lambda, D) &= (M^{cc}_{mn} - M^{ss}_{mn}) +
i(M^{cs}_{mn} + M^{sc}_{mn}),\\
\Mcc^{(2)}_{mn}(\lambda, D) &= (M^{cc}_{mn} + M^{ss}_{mn}) +
i(M^{cs}_{mn} - M^{sc}_{mn}).
\end{aligned}
\label{eq:Mccdef}
\end{equation}
Then, from (\ref{eq:Mdef}),  we observe that
\begin{equation*}
\begin{aligned}
\Mcc^{(1)}_{mn}(\lambda, D) &= \int_{\partial D} P_n(y)  (\lambda
I - \K^*_{D})^{-1}[\langle \nu, \nabla P_m \rangle](y)\,
ds(y),\\
\Mcc^{(2)}_{mn}(\lambda, D) &= \int_{\partial D} P_n(y)  (\lambda
I - \K^*_{D})^{-1}[\langle \nu, \nabla \overline{P_m} \rangle] (y) \, ds(y),\\
\end{aligned}
\end{equation*}
where $P_n$ and $P_m$ are defined by (\ref{eq:Pdef}). In order to
simplify the notation, we drop $\lambda$ in the following and
write simply $\No_{mn}(D), \Nt_{mn}(D)$.

We consider the translation, the rotation and the dilatation of
the domain $D$ by
 introducing the following notation:

\begin{itemize}
\item Shift: $T_zD = \{x+z, \mbox{ for } x\in D\}$, for $z \in
\R^2$; \item Rotation: $R_\theta D = \{e^{i\theta}x, \mbox{ for }
x\in D\}$, for $\theta\in[0,2\pi)$; \item Scaling: $sD=\{sx,
\mbox{ for } x\in D\}$, for $s>0$.
\end{itemize}

\begin{proposition}
\label{prop:complex-cgpt-under}
  For all integers $m,n$, and geometric parameters $\theta$, $s$, and $z$, the following
  holds:
  \begin{align}
    \No_{mn}(R_\theta D) = e^{i(m+n)\theta}\No_{mn}(D), \quad
    \Nt_{mn}(R_\theta D) = e^{i(n-m)\theta}\Nt_{mn}(D),    \label{eq:CGPT_rot_Nt}\\
    \nm
    \No_{mn}(sD) = {s^{m+n}}\No_{mn}(D),   \quad
    \Nt_{mn}(sD) = {s^{m+n}}\Nt_{mn}(D),    \label{eq:CGPT_scl_Nt}\\
    \nm
    \No_{mn}(T_zD) = \sum_{l=1}^m \sum_{k=1}^n \mathbf{C}^z_{ml}\No_{lk}(D) \mathbf{C}^z_{nk},
    \quad
    \Nt_{mn}(T_zD) = \sum_{l=1}^m \sum_{k=1}^n \overline{\mathbf{C}^z_{ml}}\Nt_{lk}(D) \mathbf{C}^z_{nk},    \label{eq:CGPT_trans_Nt}
  \end{align}
  where $\mathbf{C}^z$ is a lower triangle matrix with the $m,n$-th entry given
  by
  \begin{equation} \label{defcz}
  \mathbf{C}^z_{mn}= \binom{m}{n}  z^{m-n},
  \end{equation}
  and $\overline{\mathbf{C}^z}$ denotes its conjugate. Here, we
  identify $z=(z_1,z_2)$ with $z= z_1 + i z_2$.
\end{proposition}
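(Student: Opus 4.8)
The plan is to reduce all three identities to a small number of elementary facts about how the data and the operator in the integral representations transform. I will work throughout with the representations stated just above the Proposition, namely
$\No_{mn}(\widetilde D)=\int_{\partial\widetilde D}P_n\,(\lambda I-\K^*_{\widetilde D})^{-1}[\langle\nu,\nabla P_m\rangle]\,ds$ and $\Nt_{mn}(\widetilde D)=\int_{\partial\widetilde D}P_n\,(\lambda I-\K^*_{\widetilde D})^{-1}[\langle\nu,\nabla \overline{P_m}\rangle]\,ds$, valid for any admissible domain $\widetilde D$. For each of $\widetilde D=R_\theta D$, $\widetilde D=sD$, $\widetilde D=T_zD$ I will change variables back to $\partial D$ in these integrals and track (a) the scalar/exponential factor produced by $P_n$, (b) the factor produced by $\langle\nu,\nabla P_m\rangle$ (or $\langle\nu,\nabla\overline{P_m}\rangle$), (c) the factor produced by $ds$, and (d) a covariance relation that pushes $(\lambda I-\K^*_{\widetilde D})^{-1}$ back to $(\lambda I-\K^*_D)^{-1}$. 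The only non-mechanical inputs will be the two structural facts already recorded in the text: $\langle\nu,\nabla P_l\rangle$ has zero boundary mean, and $(\lambda I-\K^*_D)^{-1}$ maps zero-mean densities to zero-mean densities.

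The covariance relation is the engine. Let $\Phi:\partial D\to\partial\widetilde D$ be the natural diffeomorphism ($\tilde x\mapsto e^{i\theta}\tilde x$, $\tilde x\mapsto s\tilde x$, $\tilde x\mapsto\tilde x+z$) and let $\mathcal U\phi:=\phi\circ\Phi$ be the pullback of densities. A direct change of variables in the kernel $\langle x-y,\nu_x\rangle/|x-y|^2$ of $\K^*$ shows $\mathcal U\,\K^*_{\widetilde D}=\K^*_D\,\mathcal U$ in all three cases: the kernel and $ds$ are invariant under translation and rotation, and under $x\mapsto sx$ the kernel scales by $s^{-1}$ while $ds$ scales by $s$; and the unit normal is unchanged by translation and scaling, while under $R_\theta$ both $\nu_x$ and the increment $x-y$ rotate by $\theta$, leaving the inner product fixed. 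Consequently $\mathcal U\,(\lambda I-\K^*_{\widetilde D})^{-1}=(\lambda I-\K^*_D)^{-1}\,\mathcal U$. Next one records the transformation laws of the polynomials. Since $P_n(w)=w^n$ with $w=w_1+iw_2$, we have $P_n(e^{i\theta}\tilde x)=e^{in\theta}P_n(\tilde x)$, $P_n(s\tilde x)=s^nP_n(\tilde x)$, and, by the binomial theorem with $z$ identified with $z_1+iz_2$, $P_n(\tilde x+z)=\sum_{k=0}^n\binom nk z^{n-k}P_k(\tilde x)=\sum_{k=0}^n\mathbf{C}^z_{nk}P_k(\tilde x)$. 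Differentiating and contracting with $\nu$ (which, again, is unchanged by translation/scaling and rotated by $\theta$) gives: $(\langle\nu,\nabla P_m\rangle|_{\partial\widetilde D})\circ\Phi$ equals $e^{im\theta}\langle\nu,\nabla P_m\rangle$, or $s^{m-1}\langle\nu,\nabla P_m\rangle$ (one power of $s$ from the chain rule cancels because $\nu$ does not scale), or $\sum_{l=0}^m\mathbf{C}^z_{ml}\langle\nu,\nabla P_l\rangle$, in the three cases. Replacing $P_m$ by $\overline{P_m}=(w_1-iw_2)^m$ changes $e^{im\theta}$ to $e^{-im\theta}$, leaves the power $s^{m-1}$ unchanged, and replaces $\mathbf{C}^z_{ml}$ by $\overline{\mathbf{C}^z_{ml}}$.

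Assembling: substitute $x=\Phi(\tilde x)$ in the integral for $\No_{mn}(\widetilde D)$, use $\mathcal U$ to replace $(\lambda I-\K^*_{\widetilde D})^{-1}$ by $(\lambda I-\K^*_D)^{-1}$ acting on the pulled-back density, and collect factors. For $R_\theta$ the factors $e^{in\theta}$ (from $P_n$) and $e^{im\theta}$ (from the Neumann datum) combine to $e^{i(m+n)\theta}$, giving \eqref{eq:CGPT_rot_Nt}; for $s(\cdot)$ the factors $s^n$, $s^{m-1}$ and $s$ (from $ds$) combine to $s^{m+n}$, giving \eqref{eq:CGPT_scl_Nt}; for $T_z$ linearity yields $\No_{mn}(T_zD)=\sum_{k=0}^n\sum_{l=0}^m\mathbf{C}^z_{nk}\mathbf{C}^z_{ml}\int_{\partial D}P_k\,(\lambda I-\K^*_D)^{-1}[\langle\nu,\nabla P_l\rangle]\,ds$. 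The sums may be restricted to $k,l\ge1$: the $l=0$ term is absent because $\nabla P_0\equiv0$, and the $k=0$ term vanishes because it is $\int_{\partial D}(\lambda I-\K^*_D)^{-1}[\langle\nu,\nabla P_l\rangle]\,ds$, which is zero since $\langle\nu,\nabla P_l\rangle$ has zero mean (divergence theorem, $P_l$ harmonic) and $(\lambda I-\K^*_D)^{-1}$ preserves zero-mean functions. This is exactly \eqref{eq:CGPT_trans_Nt}, and the $\Nt$ statements follow verbatim with $e^{-im\theta}$, the same $s^{m+n}$, and $\overline{\mathbf{C}^z_{ml}}$ in place of their counterparts.

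The main obstacle is purely the bookkeeping: in the scaling case one must keep straight that $P_n$ contributes $s^n$, the normal derivative contributes only $s^{m-1}$ (because $\nu$ is scale invariant), and $ds$ contributes the final $s$, together with verifying that $\K^*$'s kernel really is scale invariant against $ds$; and in the translation case one must correctly isolate the two distinct mechanisms — vanishing of $\nabla P_0$ versus zero-mean preservation — that truncate the two sums at $1$. Everything else is routine change of variables.
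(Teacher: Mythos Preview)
Your proof is correct and follows essentially the same approach as the paper: both rely on the covariance relation $\mathcal U\,\K^*_{\widetilde D}=\K^*_D\,\mathcal U$ for the pullback under each transformation, the homogeneity/binomial behavior of $P_m$, and the zero-mean argument to truncate the translation sums at $k,l\ge 1$. Your treatment is somewhat more streamlined in that it unifies the three cases under a single change-of-variables framework and explicitly tracks the scaling of the kernel of $\K^*$ against $ds$, whereas the paper handles rotation and translation in detail and only sketches scaling; but the key ingredients and logical structure are the same.
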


An ingredient that we will need in the proof is the following
chain rule between the gradient of a function and its push forward
under transformation.  In fact, for any diffeomorphism $T$ from
$\R^2$ to $\R^2$ and any scalar-valued differentiable map $f$ on
$\R^2$, we have
\begin{equation}
\textrm{d} (f\circ T) \big|_x (h) = \left(\textrm{d} f
\big|_{T(x)} \circ \textrm{d}T\big|_x \right) (h),
\label{eq:chainrule}
\end{equation}
for any tangent vector $h \in \R^2$, with $\textrm{d}T$ being the
differential of $T$.

\begin{proof}[Proof of Proposition \ref{prop:complex-cgpt-under}] We will follow proofs of similar relations that can be found in \cite{AGKLY11}.
Let us first show (\ref{eq:CGPT_rot_Nt}) for the rotated domain
$D_\theta:=R_\theta D$. For a function $\varphi(y), y\in
\partial D$, we define a function $\varphi^\theta(y_\theta)$,
$y_\theta := R_\theta y \in \partial D_\theta$ by
\begin{equation*}
\varphi^\theta(y_\theta) = \varphi \circ R_{-\theta} (y_\theta) =
\varphi(y).
\end{equation*}
It is proved in \cite{AGKLY11} that $\lambda I - \K^*_D$ is
invariant under the rotation map, that is,
\begin{equation}
(\lambda I - \K^*_{D_\theta})[\varphi^\theta] (y_\theta) =
(\lambda I - \K^*_{D})[\varphi](y). \label{eq:ImKrot}
\end{equation}
We also check that $P_m(R_\theta y) = e^{im\theta} P_m(y)$.

We will focus on the relation for $\No_{mn}$, the other one can be
proved in the same way. By definition, we have
\begin{equation}
\begin{aligned}
\No_{mn}(D) &= \int_{\partial D} P_n(y) \varphi_{D,m}(y) ds(y),\\
\No_{mn}(D_\theta) &= \int_{\partial D_\theta} P_n(y_\theta) \varphi_{D_\theta,m}(y_\theta) ds(y_\theta),\\
\end{aligned}
\label{eq:Nnm-Nnm_theta}
\end{equation}
where
\begin{equation*}
\begin{aligned}
\varphi_{D,m}(y) &= (\lambda I - \K^*_{D})^{-1} [\langle \nu, \nabla P_m \rangle](y), \\
\varphi_{D_\theta,m}(y_\theta) &= (\lambda I -
\K^*_{D_\theta})^{-1}[\langle \nu, \nabla P_m \rangle ](y_\theta).
\end{aligned}
\end{equation*}
Note that the last function differs from $\varphi^{\theta}_{D,m}$.
By the change of variables $y_\theta=R_{\theta} y$ in the first
expression of  (\ref{eq:Nnm-Nnm_theta}), we obtain
\begin{equation*}
\begin{aligned}
\No_{mn}(D) &= \int_{\partial D_\theta} P_n(R_{-\theta}y_\theta)
\varphi_{D,m}(R_{-\theta}y_\theta)
ds(y_\theta)\\
&= e^{-i n\theta}\int_{\partial D_\theta} P_n(y_\theta) \varphi^{\theta}_{D,m}(y_\theta) ds(y_\theta).\\
\end{aligned}
\end{equation*}
From (\ref{eq:ImKrot}), we have
\begin{equation*}
\begin{aligned}
(\lambda I - \K^*_{D_\theta})[\varphi^\theta_{D,m}](y_\theta) &= (\lambda I - \K^*_{D})[\varphi_{D,m}](y)\\
&= \langle \nu_y, \nabla P_m(y) \rangle.
\end{aligned}
\end{equation*}
Moreover, $P_m(y)=e^{-im\theta}P_m(y_\theta)$ so that, by applying
the chain rule (\ref{eq:chainrule}) with $f=P_m$, $T=R_\theta$,
$x=y$ and $h=\nu_y$, we can conclude that
\begin{equation*}
\begin{aligned}
\langle \nu_y,\nabla P_m(y) \rangle &= e^{-im\theta}\langle R_\theta\nu_y, \nabla P_m(y_\theta) \rangle \\
&= e^{-im\theta}\langle \nu_{y_\theta}, \nabla P_m(y_\theta)
\rangle.
\end{aligned}
\end{equation*}
Therefore,
$\varphi^\theta_{D,m}=e^{-im\theta}\varphi_{D_\theta,m}$, and we
conclude that
$N_{mn}^{(1)}(D_\theta)=e^{i(m+n)\theta}N_{mn}^{(1)}(D)$.

The second identity in (\ref{eq:CGPT_rot_Nt}) results from the
same computation as above (the minus sign comes form the conjugate
in the definition of $\Nt$), and the two equations in
(\ref{eq:CGPT_scl_Nt}) are proved in the same way, replacing the
transformed function $\varphi^\theta$ by
\begin{equation*}
\varphi^s(sy)=\varphi(y).
\end{equation*}

Thus, only (\ref{eq:CGPT_trans_Nt}) remains. Since the difference
between these two comes from the conjugation, we will focus only
on the first identity in (\ref{eq:CGPT_trans_Nt}). The strategy
will be once again the following: for a function $\varphi(y), y
\in
\partial D$, we define a function $\varphi^z(y_z), y_z =y + z \in
\partial D_z$, with $D_z:=T_z D$,  by
\begin{equation*}
\varphi^z(y_z)=\varphi \circ T_{-z}(y_z)=\varphi(y),
\end{equation*}
which also verifies an invariance relation similar to
(\ref{eq:ImKrot})
\begin{equation}
(\lambda I - \K^*_{D_z})[\varphi^z](y_z) = (\lambda I - \K^*_{D})
[\varphi] (y). \label{eq:ImKtrans}
\end{equation}
Moreover, for every integer $q\in\mathbb{N}$ one has the following
\begin{equation}
P_q(y_z) = (y+z)^q = \sum_{r=0}^q  \binom {q} {r} y^r z^{q-r}.
\label{eq:develop_P_q}
\end{equation}
Equations (\ref{eq:Nnm-Nnm_theta}) become
\begin{equation*}
\begin{aligned}
\No_{mn}(D) &= \int_{\partial D} P_n(y) \varphi_{D,m}(y) ds(y),\\
\No_{mn}(D_z) &= \int_{\partial D_z} P_n(y_z) \varphi_{D_z,m}(y_z) ds(y_z),\\
\end{aligned}
\end{equation*}
where
\begin{equation*}
\begin{aligned}
\varphi_{D,m}(y) &= (\lambda I - \K^*_{D})^{-1}[\langle \nu, \nabla P_m\rangle](y), \\
\varphi_{D_z,m}(y_z) &= (\lambda I - \K^*_{D_z})^{-1}[\langle \nu,
\nabla P_m \rangle](y_z).
\end{aligned}
\end{equation*}
Thus, combining (\ref{eq:ImKtrans}) and (\ref{eq:develop_P_q})
leads us to
\begin{equation*}
\begin{aligned}
(\lambda I - \K^*_{D_z})[\varphi_{D_z,m}](y_z) & = \langle \nu_{y_z}, \nabla P_m(y_z) \rangle \\
 &= \langle \nu_{y},\sum_{l=1}^m  \binom {m} {l} z^{m-l}\nabla P_l(y)  \rangle \\
 &= \sum_{l=1}^m  \binom {m} {l} z^{m-l} (\lambda I-\K^{*}_D)[\varphi_{D,l}] (y) \\
 &= \sum_{l=1}^m  \binom {m} {l} z^{m-l} (\lambda I-\K^{*}_{D_z})[\varphi^z_{D,l}] (y_z), \\
\end{aligned}
\end{equation*}
so that we have
\begin{equation*}
\varphi_{D_z,m} (y)= \sum_{l=1}^m  \binom {m} {l} z^{m-l}
\varphi^z_{D,l}(y_z).
\end{equation*}
Hence, returning to the definition of $\No_{mn}(D_z)$ with the
substitution $y_z\leftrightarrow y$, we obtain
\begin{equation*}
\begin{aligned}
\No_{mn}(D_z) &= \sum_{l=1}^m \binom{m}{l} z^{m-l }\int_{\partial D_z}P_n(y_z) \varphi^z_{D,l}(y_z) ds(y_z), \\
 &= \sum_{l=1}^m \sum_{k=1}^n \binom{m}{l}\binom{n}{k}z^{m-l}z^{n-k}\No_{lk}(D),
\end{aligned}
\end{equation*}
which is the desired result. Note that the index $k$ begins with
$k=1$ because $\int_{\partial D_z}\varphi^z_{D,l} = 0$. This
completes the proof. \end{proof}

\subsection{Some properties of complex CGPTs}\label{sec:some-properties-ccgpt}

We define the complex  CGPT matrices by $\No:= (\No_{mn})_{m,n}$
and $\Nt:= (\Nt_{mn})_{m,n}$. We set $w=se^{i\theta}$ and
introduce the diagonal matrix $\mathbf{G}^w$ with the $m$-th
diagonal entry given by $s^me^{im\theta}$. Proposition
\ref{prop:complex-cgpt-under} implies immediately that
\begin{align}
  \No(\tsr D) &= \mathbf{C}^z \mathbf{G}^w\No(D) \mathbf{G}^w(\mathbf{C}^z)^t ,\label{eq:DB_tsr_No}\\
  \nm
  \Nt(\tsr D) &= \overline{\mathbf{C}^z \mathbf{G}^w}\Nt(D) \mathbf{G}^w(\mathbf{C}^z)^t, \label{eq:DB_tsr_Nt}
\end{align}
where $\mathbf{C}^z$ is defined by (\ref{defcz}). Relations
(\ref{eq:DB_tsr_No}) and (\ref{eq:DB_tsr_Nt}) still hold for the
truncated CGPTs of finite order, due to the triangular shape of
the matrix $\mathbf{C}^z$.
Using the symmetry  of the CGPTs (\cite[Theorem
4.11]{ammari_polarization_2007}) and the positivity of the GPTs as
proved in \cite{ammari_polarization_2007}, we  easily establish
the following result.
\begin{proposition}
  \label{prop:CGPT_symm_herm}
  The complex CGPT matrix $\No$ is symmetric: $(\No)^t = \No$, and $\Nt$ is
  Hermitian: $(\Nt)^H = \Nt$. Consequently, the diagonal elements of
  $\Nt$ are strictly positive if $\lambda >0$ and strictly negative if $\lambda <0$.
\end{proposition}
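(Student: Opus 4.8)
The plan is to reduce everything to two facts already available in \cite{ammari_polarization_2007}: the symmetry relations among the contracted GPTs, namely $M^{cc}_{mn}=M^{cc}_{nm}$, $M^{ss}_{mn}=M^{ss}_{nm}$ and $M^{cs}_{mn}=M^{sc}_{nm}$ (hence also $M^{sc}_{mn}=M^{cs}_{nm}$), and the sign-definiteness of the GPT viewed as a quadratic form $h\mapsto \int_{\partial D} h\,(\lambda I-\mathcal{K}_D^*)^{-1}[\partial_\nu h]\,ds$ on (restrictions of) harmonic polynomials, which is positive when $\kappa>1$ and negative when $0<\kappa<1$. Recall that a physical contrast forces $|\lambda|>1/2$, so that $\kappa>1$ is the same as $\lambda>0$ and $0<\kappa<1$ the same as $\lambda<0$ in the range under consideration.

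First I would substitute the symmetry relations directly into the definitions \eqref{eq:Mccdef}. For $\No$, the $(n,m)$ entry is $(M^{cc}_{nm}-M^{ss}_{nm})+i(M^{cs}_{nm}+M^{sc}_{nm})$; replacing $M^{cc}_{nm}$ by $M^{cc}_{mn}$, $M^{ss}_{nm}$ by $M^{ss}_{mn}$, and $M^{cs}_{nm}+M^{sc}_{nm}$ by $M^{sc}_{mn}+M^{cs}_{mn}$, one recovers exactly $\No_{mn}$, so $(\No)^t=\No$. For $\Nt$, the same substitution turns $\overline{\Nt_{nm}}=(M^{cc}_{nm}+M^{ss}_{nm})-i(M^{cs}_{nm}-M^{sc}_{nm})$ into $(M^{cc}_{mn}+M^{ss}_{mn})+i(M^{cs}_{mn}-M^{sc}_{mn})=\Nt_{mn}$, so $(\Nt)^H=\Nt$. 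In particular the diagonal entries of $\Nt$ are real; more precisely, the symmetry relation with $m=n$ gives $M^{cs}_{mm}=M^{sc}_{mm}$, hence $\Nt_{mm}=M^{cc}_{mm}+M^{ss}_{mm}$.

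Next I would rewrite $M^{cc}_{mm}$ and $M^{ss}_{mm}$ as genuine GPT quadratic forms. Combining \eqref{defc1}, \eqref{defc2} with \eqref{eq:abcomp}, one gets $M^{cc}_{mm}=\int_{\partial D} h^c_m\,(\lambda I-\mathcal{K}_D^*)^{-1}[\partial_\nu h^c_m]\,ds$, where $h^c_m$ is the harmonic polynomial whose restriction is $r^m\cos m\theta$, and likewise $M^{ss}_{mm}=\int_{\partial D} h^s_m\,(\lambda I-\mathcal{K}_D^*)^{-1}[\partial_\nu h^s_m]\,ds$ with $h^s_m$ corresponding to $r^m\sin m\theta$. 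Since $h^c_m$ and $h^s_m$ are non-constant harmonic polynomials, their boundary normal derivatives cannot vanish identically, so the cited definiteness of the GPT gives $M^{cc}_{mm}>0$ and $M^{ss}_{mm}>0$ when $\lambda>0$, and $M^{cc}_{mm}<0$, $M^{ss}_{mm}<0$ when $\lambda<0$. Adding the two yields $\Nt_{mm}>0$ for $\lambda>0$ and $\Nt_{mm}<0$ for $\lambda<0$.

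The only real subtlety is bookkeeping: one must invoke the positivity result from \cite{ammari_polarization_2007} in exactly the quadratic-form shape used above, and make sure that $\partial_\nu h^c_m$ and $\partial_\nu h^s_m$ are not the zero functions on $\partial D$ — which is immediate, since a harmonic function on a neighborhood of $\overline D$ with vanishing normal derivative on $\partial D$ is constant. Everything else is a direct substitution into \eqref{eq:Mccdef}, so I do not expect any serious obstacle.
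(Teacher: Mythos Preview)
Your proposal is correct and follows essentially the same approach as the paper: the paper simply cites the symmetry of the CGPTs (Theorem~4.11 in \cite{ammari_polarization_2007}) and the positivity of the GPTs from the same reference, and you have spelled out exactly how those two ingredients yield the symmetry of $\No$, the Hermitian property of $\Nt$, and the sign of $\Nt_{mm}=M^{cc}_{mm}+M^{ss}_{mm}$.
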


Furthermore, the CGPTs of rotation invariant shapes have special
structures:
\begin{proposition}
  \label{prop:CGPT_rotsymm_struct}
  Suppose that $D$ is invariant under rotation of angle $2\pi/p$ for
  some integer $p\geq 2$, {\it i.e.}, $R_{2\pi/p}D = D$, then
  \begin{align}
    \No_{mn}(D) = 0, \mbox{ if } p \mbox{ does not divide }  (m+n) \label{eq:CGPT_struct_No},\\
    \nm
    \Nt_{mn}(D) = 0, \mbox{ if } p \mbox{ does not divide } (m-n).
    \label{eq:CGPT_struct_Nt}
  \end{align}
\end{proposition}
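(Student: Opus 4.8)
The plan is to exploit the rotation formula \eqref{eq:CGPT_rot_Nt} from Proposition \ref{prop:complex-cgpt-under}, using the hypothesis $R_{2\pi/p}D = D$ as a self-consistency constraint. Set $\theta = 2\pi/p$ so that $R_\theta D = D$. Then \eqref{eq:CGPT_rot_Nt} gives, on the one hand, $\No_{mn}(R_\theta D) = e^{i(m+n)\theta}\No_{mn}(D)$, and on the other hand, since $R_\theta D = D$ as sets, $\No_{mn}(R_\theta D) = \No_{mn}(D)$. Equating the two yields $\left(e^{i(m+n) 2\pi/p} - 1\right)\No_{mn}(D) = 0$. The factor $e^{i(m+n)2\pi/p} - 1$ vanishes precisely when $p \mid (m+n)$; hence whenever $p \nmid (m+n)$ we must have $\No_{mn}(D) = 0$, which is \eqref{eq:CGPT_struct_No}.

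The argument for $\Nt$ is identical in structure: applying the second relation in \eqref{eq:CGPT_rot_Nt} with $\theta = 2\pi/p$ and using $R_\theta D = D$ gives $\left(e^{i(n-m)2\pi/p} - 1\right)\Nt_{mn}(D) = 0$, and $e^{i(n-m)2\pi/p} = 1$ exactly when $p \mid (n-m)$, equivalently $p \mid (m-n)$. This forces $\Nt_{mn}(D) = 0$ whenever $p \nmid (m-n)$, which is \eqref{eq:CGPT_struct_Nt}. One small point worth a sentence: the relation $R_\theta D = D$ should be read as equality of domains, so that the boundary integrals defining $\No_{mn}$ and $\Nt_{mn}$ over $\partial(R_\theta D)$ and over $\partial D$ are literally the same integral; Proposition \ref{prop:complex-cgpt-under} is stated for an arbitrary domain and its rotation, so it applies verbatim with the rotated domain happening to coincide with the original.

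There is essentially no obstacle here — the entire content has already been packaged into the transformation law \eqref{eq:CGPT_rot_Nt}, and the proof is a one-line consistency argument. The only thing to be careful about is the direction of the divisibility condition (the contrapositive: $p \mid (m+n)$ is what makes the exponential trivial, so $p \nmid (m+n)$ is what forces the tensor to vanish) and, for $\Nt$, tracking that the exponent is $n - m$ rather than $m + n$ because of the conjugation in the definition \eqref{eq:Mccdef}, which is exactly where the $(m-n)$ in \eqref{eq:CGPT_struct_Nt} comes from.
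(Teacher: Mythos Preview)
Your proof is correct and is essentially the same as the paper's own argument: both apply the rotation formula \eqref{eq:CGPT_rot_Nt} with $\theta=2\pi/p$, invoke the invariance $R_{2\pi/p}D=D$, and conclude that the resulting factor $e^{i(m+n)2\pi/p}$ (respectively $e^{i(n-m)2\pi/p}$) must equal $1$ unless the corresponding CGPT vanishes.
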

\begin{proof}
  Suppose that $p$ does not divide $(m+n)$, and define $r:=2\pi(n+m)/p \mbox{ mod }
  2\pi$. Then by the rotation symmetry of $D$ and the symmetry property
  of the
  CGPTs, we have
  $$
  \No_{mn}(D) = \No_{mn}(R_{2\pi /p}D) = e^{i(m+n)2\pi/p}\No_{mn}(D) =
  e^{ir}\No_{mn}(D).
  $$
  Since $r<2\pi$ and $r\neq 0$, we conclude that $\No_{mn}(D)=0$. The
  proof of \eqref{eq:CGPT_struct_Nt} is similar.
\end{proof}

\section{Shape Identification by the CGPTs}\label{sec:shape-ident-cgpt}

We call a \emph{dictionary} $\dico$  a collection of standard
shapes, which are centered at the origin and with characteristic
sizes of order 1. Given the CGPTs of an unknown shape $D$, and
assuming that $D$ is obtained from a certain element $B\in\dico$
by applying some unknown rotation $\theta$, scaling $s$ and
translation $z$, \ie, $D=T_zsR_\theta B$, our objective is to
recognize $B$ from $\dico$. For doing so, one may proceed by first
reconstructing the shape $D$ using its CGPTs through some
optimization procedures as proposed in \cite{AKLZ12}, and then
match the reconstructed shape with $\dico$. However, such a method
may be time-consuming and the recognition efficiency depends on
the shape reconstruction algorithm.

We propose in subsections \ref{sec:cgpt-matching} and
\ref{sec:transf-invar-shape} two shape identification algorithms
using the CGPTs. The first one matches the CGPTs of data with that
of the dictionary element by estimating the transform parameters,
while the second one is based on a transform invariant shape
descriptor obtained from the CGPTs. The second approach is
computationally more efficient. Both of them operate directly in
the data domain which consists of CGPTs and avoid the need for
reconstructing the shape $D$. The heart of our approach is some
basic algebraic equations between the CGPTs of $D$ and $B$ that
can be deduced easily from \eqref{eq:DB_tsr_No} and
\eqref{eq:DB_tsr_Nt}. Particularly, the first four equations read:
\begin{align}
  \No_{11}(D) &= w^2 \No_{11}(B), \label{eq:No11}\\
  \No_{12}(D) &= 2\No_{11}(D)z + w^3 \No_{12}(B), \label{eq:No12}\\
  \Nt_{11}(D) &= s^2 \Nt_{11}(B),\label{eq:Nt11}\\
  \Nt_{12}(D) &= 2\Nt_{11}(D)z + s^2w\Nt_{12}(B),\label{eq:Nt12}
\end{align}
where $w=s e^{i \theta}$.

\subsection{CGPTs matching}
\label{sec:cgpt-matching}

\subsubsection{Determination of transform
  parameters}\label{sec:determ-transf-param}
Suppose that the complex CGPT matrices $\No(B), \Nt(B)$ of the
true shape $B$ are given. Then, from \eqref{eq:Nt11}, we obtain
that
\begin{align}
  s = \sqrt{\Nt_{11}(D) / \Nt_{11}(B)}. \label{eq:scaling}
\end{align}

\paragraph{Case 1: Rotational symmetric shape.}
If the shape $B$ has rotational symmetry, \ie, $R_{2\pi /p}B =B$
for some $p\geq 2$, then from Proposition
\ref{prop:CGPT_rotsymm_struct} we have $\Nt_{12}(B)=0$ and the
translation parameter $z$ is uniquely determined from
\eqref{eq:Nt12} by
\begin{align}
  z = \frac{\Nt_{12}(D)}{2\Nt_{11}(D)}. \label{eq:trans_P2}
\end{align}
On the contrary, the rotation parameter $\theta$ (or $\yr$) can
only be determined up to a multiple of ${2\pi/p}$, from CGPTs of
order $\lceil p/2 \rceil$ at least. Although explicit expressions
of $e^{ip\theta}$ can be deduced from
\eqref{eq:No11}~-~\eqref{eq:Nt12} (or higher order equations if
necessary), we propose to recover $e^{ip\theta}$ by solving the
least squares problem:
\begin{align}
  \min_\theta \left(\norm{\No(\tsr B) - \No(D)}_F^2 + \norm{\Nt(\tsr
      B) - \Nt(D)}_F^2\right).  \label{eq:rot_lst}
\end{align}
Here, $s$ and $z$ are given by (\ref{eq:scaling}) and
(\ref{eq:trans_P2}) respectively, and $\No(D)$ and $\Nt(D)$ are
the truncated complex CGPTs matrices of dimension $\lceil p/2
\rceil \times \lceil p/2 \rceil$.


\paragraph{Case 2: Non rotational symmetric shape.}
Consider a non rotational symmetric shape $B$ which satisfies the
assumption:
\begin{align}
  \label{eq:cond_P1}
 \No_{11}(B) \neq 0 \quad \mbox{and} \quad  \mbox{det}
  \begin{pmatrix}
    \No_{11}(B) & \Nt_{11}(B) \\
    \No_{12}(B) & \Nt_{12}(B)
  \end{pmatrix} \neq 0.
\end{align}
From \eqref{eq:No12} and \eqref{eq:Nt12}, it follows that we can
uniquely determine the translation $z$ and the rotation parameter
$w= e^{i\theta}$ from CGPTs of orders one and two by solving the
following linear system:
\begin{align}
  \label{eq:linsys_P1}
  \No_{12}(D)/\No_{11}(D) &= 2z + w \No_{12}(B)/\No_{11}(B), \notag \\
  \nm
  \Nt_{12}(D)/\Nt_{11}(D) &= 2z + w \Nt_{12}(B)/\Nt_{11}(B).
\end{align}


\subsubsection{Debiasing by least squares solutions} \label{secdeb} In practice (for both the rotational symmetric and non rotational
symmetric cases), the value of the parameters $z, s$ and $\theta$
provided by the analytical formulas and numerical procedures above
may be inexact, due to the noise in the data and the
ill-conditioned character of the linear system
(\ref{eq:linsys_P1}). Let $z^*, s^*, \theta^*$ be the true
transform parameters, which can be considered as  perturbations
around the estimations $z, s, \theta$ obtained above:
\begin{align}
  z^*=z+\delta_z, \ s^*=s \delta_s, \mbox{ and } \theta^*=\theta+\delta_\theta , \label{eq:parameters_pertub}
\end{align}
for $\delta_z, \delta_\theta$ small and $\delta_s$ close to 1. To
find these perturbations, we solve a nonlinear least squares
problem:
\begin{align}
  \min_{z',s',\theta'}
  \left(\norm{\No(T_{z'}s'R_{\theta'} B) -
      \No(D)}_F^2 +
    \norm{\Nt(T_{z'}s'R_{\theta'} B) - \Nt(
      D)}_F^2\right) , \label{eq:tsr_lst_debiasing}
\end{align}
with $(z,s,\theta)$ as an initial guess. Here, the order of the
CGPTs in (\ref{eq:tsr_lst_debiasing}) is taken to be $2$ in the
non rotational case and $\max(2, [p/2])$ in the rotational
symmetric case. Thanks to the relations \eqref{eq:DB_tsr_No} and
\eqref{eq:DB_tsr_Nt}, one can calculate explicitly the derivatives
of the objective function, therefore can solve
\eqref{eq:tsr_lst_debiasing} by means of standard gradient-based
optimization methods.

\subsubsection{First algorithm for shape identification}\label{sec:first-algor-shape}
For each dictionary element, we determine the transform parameters
as above, then measure the similarity of the complex CGPT matrices
using the Frobenius norm, and choose the most similar element as
the identified shape. Intuitively, the true dictionary element
will give the correct transform parameters hence the most similar
CGPTs. This procedure is described in Algorithm
\ref{algo:shape-ident-cgpt}.

\begin{algorithm}
  \caption{Shape identification based on CGPT matching}
  \label{algo:shape-ident-cgpt}
  \begin{algorithmic}
    \STATE Input: the first $k$-th order CGPTs $\No(D), \Nt(D)$ of an unknown shape $D$
    \FOR {$B_n\in\dico$}
    \STATE 1. Estimation of $z, s, \theta$ using the procedures described in
    subsections~\ref{sec:determ-transf-param} and \ref{secdeb};
    \STATE 2. $\tilde D \leftarrow R_{-\theta}s^{-1}T_{-z}D$, and calculate $\No(\tilde D)$ and  $\Nt(\tilde
    D)$;
    \STATE 3. $E^{(1)}\leftarrow \No(B_n) - \No(\tilde D)$, and $E^{(2)}\leftarrow \Nt(B_n) - \Nt(\tilde
    D)$;
    \STATE 4.
    $e_n\leftarrow (\norm{E^{(1)}}_F^2 + \norm{E^{(2)}}_F^2)^{1/2}/
    (\norm{\No(B_n)}_F^2 + \norm{\Nt(B_n)}_F^2)^{1/2}$;
    \STATE 5. $n\leftarrow n+1$;
    \ENDFOR
    \STATE Output: the true dictionary element $n^*\leftarrow \mbox{argmin}_n
    e_n$.
  \end{algorithmic}
\end{algorithm}

\subsection{Transform invariant shape descriptors}
\label{sec:transf-invar-shape} From \eqref{eq:Nt11} and
\eqref{eq:Nt12} we deduce the following identity:
\begin{align}
  \frac{\Nt_{12}(D)}{2\Nt_{11}(D)} = z +
  s\yr\frac{\Nt_{12}(B)}{2\Nt_{11}(B)}, \label{eq:centroid}
\end{align}
which is well defined since $\Nt_{11}\neq 0$ thanks to the
Proposition \ref{prop:CGPT_symm_herm}. Identity
\eqref{eq:centroid} shows a very simple relationship between
$\frac{\Nt_{12}(B)}{2\Nt_{11}(B)}$ and
$\frac{\Nt_{12}(D)}{2\Nt_{11}(D)}$ for $D= T_z s R_\theta B$. .

Let $u=\frac{\Nt_{12}(D)}{2\Nt_{11}(D)}$. We first define  the
following quantities which are translation invariant:
\begin{align}
  \label{eq:shape_descrp_trans}
  \Tauo(D) &= \No(T_{-u}D) = \mathbf{C}^{-u}\No(D)(\mathbf{C}^{-u})^t, \\
  \nm
  \Taut(D) &= \Nt(T_{-u}D) = \overline{\mathbf{C}^{-u}}\Nt(D)(\mathbf{C}^{-u})^t,
\end{align}
with the matrix $\mathbf{C}^{-u}$ being the same as in Proposition
\ref{prop:complex-cgpt-under}. From $\Tauo(D)
=(\Tauo_{mm}(D))_{m,n}$ and $\Taut(D) =(\Taut_{mm}(D))_{m,n}$, we
define, for any indices $m,n$, the scaling invariant quantities:
\begin{align}
  \label{eq:shape_descrp_scl}
  \Sclo_{mn}(D) =
  \frac{\Tauo_{mn}(D)}{\left(\Taut_{mm}(D)\Taut_{nn}(D)\right)^{1/2}}, \
  \Sclt_{mn}(D) =
  \frac{\Taut_{mn}(D)}{\left(\Taut_{mm}(D)\Taut_{nn}(D)\right)^{1/2}}
  .
\end{align}
Finally, we introduce the CGPT-based shape descriptors $\Dcrpo = (
\Dcrpo_{mn})_{m,n}$ and $\Dcrpt =  (\Dcrpt_{mn})_{m,n}$:
\begin{align}
  \label{eq:shape_descrp}
  \Dcrpo_{mn}(D) = |\Sclo_{mn}(D)|, \ \Dcrpt_{mn}(D) =
  |\Sclt_{mn}(D)|,
\end{align}
where $|\cdot|$ denotes the modulus of a complex number.
Constructed in this way, $\Dcrpo$ and $\Dcrpt$ are clearly
invariant under translation, rotation, and scaling.

It is worth emphasizing the symmetry property, $\Dcrpo_{mn} =
\Dcrpo_{nm}, \Dcrpt_{mn} = \Dcrpt_{nm}$, and the fact that
$\Dcrpt_{mm} =1$ for any $m$.

\subsubsection{Second algorithm for shape identification}\label{sec:second-algor-shape}
Thanks to the transform invariance of the new shape descriptors,
there is no need now for calculating the transform parameters, and
the similarity between a dictionary element and the unknown shape
can be directly measured from $\Dcrpo$ and $\Dcrpt$. As in
Algorithm~\ref{algo:shape-ident-cgpt}, we use the Frobenius norm
as the distance between two shape descriptors and compare with all
the elements of the dictionary. We propose a simplified method for
shape identification, as described in Algorithm
\ref{algo:shape-ident-inv}.

\begin{algorithm}
  \caption{Shape identification based on transform invariant descriptors}
  \label{algo:shape-ident-inv}
  \begin{algorithmic}
    \STATE Input: the first $k$-th order shape descriptors $\Dcrpo(D), \Dcrpt(D)$ of an unknown shape $D$
    \FOR {$B_n\in\dico$}

    \STATE 1.
    $e_n\leftarrow \left(\norm{\Dcrpo(B_n) - \Dcrpo(D)}_F^2 + \norm{\Dcrpt(B_n) -
    \Dcrpt(D)}_F^2\right)^{1/2}$;
    \STATE 2. $n\leftarrow n+1$;
    \ENDFOR
    \STATE Output: the true dictionary element $n^*\leftarrow \mbox{argmin}_n
    e_n$.
  \end{algorithmic}
\end{algorithm}

\section{Numerical Experiments} \label{sec:numer-exper}

In this section we present a variety of numerical results on the
theoretical framework discussed in this paper in the context of
target identification from noisy MSR  measurements. Given a shape
$D_0$ of characteristic size $\delta$, the procedure of our
numerical experiment can be summarized as follows:
\begin{enumerate}
\item Data simulation. $N$ sources (and also receivers) are
equally
  distributed on a circle of radius $R$, which is centered at an
  arbitrary point $z_0\in D_0$ and includes $D_0$, see Figure
  \ref{fig:data_acq}. The MSR matrix is obtained by evaluating
  numerically its integral expression \eqref{eq:dfield} then adding a
  white noise of variance $\sigma_{\mathrm{noise}}^2$. For simplicity, here
  we suppose that the reference point $z_0\in D_0$ can be estimated by
  means of  algorithms such as MUSIC (standing for MUltiple SIgnal Classification) \cite{AGJ, ammari_polarization_2007}.
\item Reconstruction of the CGPTs of $D=D_0-z_0$ using formula
  \eqref{eq:Mest} or the least squares algorithm \eqref{eq:lsqr}.
\item For a given dictionary $\dico$, apply
  Algorithm~\ref{algo:shape-ident-cgpt} (or
  Algorithm~\ref{algo:shape-ident-inv}) using the CGPTs of $D$ and
  identify the true shape from $\dico$.
\end{enumerate}
We emphasize that the reconstructed CGPTs of shape $D$ depend on
the reference point $z_0$. We fix the conductivity parameter
$\kappa=4/3$ throughout this section.

\begin{figure}[htp]
  \centering
  \includegraphics[width=7.5cm]{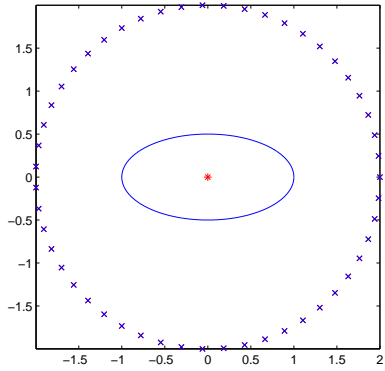}
  \caption{An example of the configuration for MSR data
    simulation. The unknown shape is an ellipse whose long and short
    axes are 2 and 1, respectively. $N=51$ sources/receivers (marked by
    ``x'') are equally placed on a circle of radius $R=2$ centered at
    $z_0=[0,0]$ (marked by ``*'').}
  \label{fig:data_acq}
\end{figure}

\subsection{Reconstruction of CGPTs}\label{sec:reconstruction-cgpt}
The theoretical analysis presented in section
\ref{sec:reconstr-cgpt-stab} suggests the following two step
method for the reconstruction of CGPTs. First we apply
\eqref{eq:Mest} (or equivalently solve the least squares
problem~\eqref{eq:lsqr}) by fixing the truncation order $K$ as in
\eqref{eq:nregime}:
\begin{align}
  K \leq \min\left ( \frac{\log(\sigma_{\mathrm{noise}}/N)}{\log\varepsilon}-2, N/2\right
  ).
  \label{eq:max_trunc_ord}
\end{align}
Here, $\sigma_{\mathrm{noise}}$ is the standard deviation of the
measurement noise and $\eps = \delta/R$ with $\delta$ being the
characteristic size of the target and $R$ the distance between the
target center and the circular array of transmitters/receivers.
Then, we keep  only the first $m_0$ orders in the reconstructed
CGPTs, with $m_0$ being the resolving order deduced from
estimation \eqref{eq:snrest}:
\begin{align}
  m_0 =
  \frac{\log\sigma_{\mathrm{noise}} - \log \tau_0}{2\log\varepsilon}, \label{eq:max_CGPT_order}
\end{align}
and $\tau_0\leq 1$ is the tolerance number introduced in
(\ref{eq:msdef}). In all our numerical experiments we set the
noise level $\sigma_{\mathrm{noise}}$ to:
\begin{align}
  \label{eq:noise_model}
  \sigma_{\mathrm{noise}} = (\V_{\mbox{max}} -
  \V_{\mbox{min}})\sigma_0,
\end{align}
with a positive constant $\sigma_0$ and $\V_{\mbox{max}}$ and
$\V_{\mbox{min}}$ being the maximal and the minimal coefficient in
the MSR matrix $\V$. Using the configuration given in
Figure~\ref{fig:data_acq} and for various noise level, we
reconstruct the CGPTs of the ellipse up to a truncation order $K$
which is determined as in \eqref{eq:max_trunc_ord}. For each $k
\leq K$, the relative error of the first $k$-th order
reconstructed CGPTs is evaluated by comparing with their
theoretical value (\cite[Proposition
4.7]{ammari_polarization_2007}). The results are shown in
Figure~\ref{fig:err_rec_CGPT}. In Figure
\ref{fig:rslv_ord_rel_err_CGPT} we plot the resolving order $m_0$
given by \eqref{eq:max_CGPT_order} and the relative error of the
reconstruction within this order, for $\sigma_0$ in the range
$[10^{-3}, 1]$.


\begin{figure}[htp]
  \centering
  \subfigure[$\sigma_0=0.01, m_0=6$]{\includegraphics[width=7.5cm]{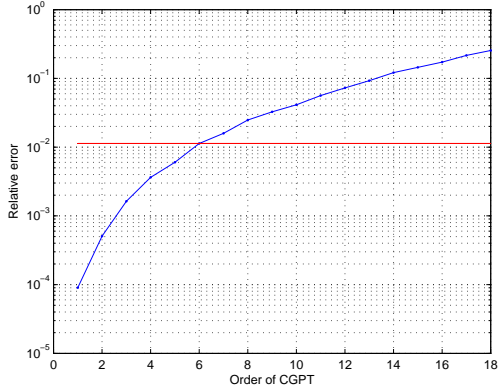}}
  \subfigure[$\sigma_0=0.1, m_0=4$]{\includegraphics[width=7.5cm]{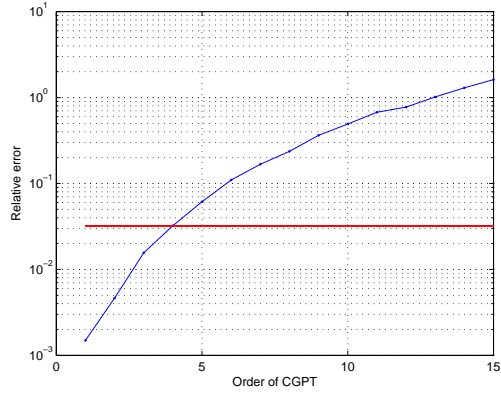}}
  \subfigure[$\sigma_0=0.5, m_0=3$]{\includegraphics[width=7.5cm]{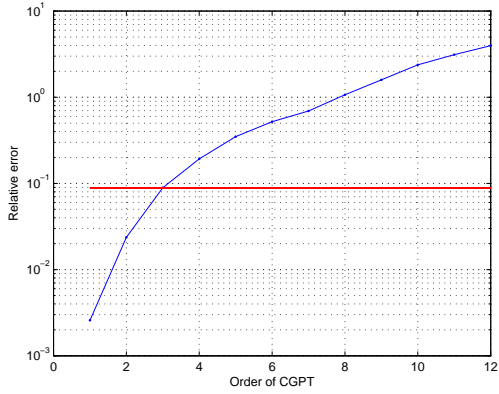}}
  \subfigure[$\sigma_0=1.0, m_0=2$]{\includegraphics[width=7.5cm]{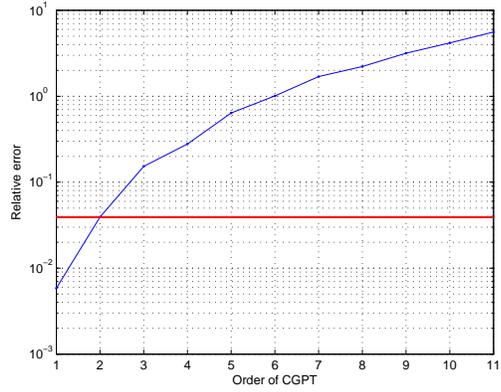}}
  \caption{Relative error of the reconstructed CGPTs. For each noise
    level, we repeat the experiment 100 times (corresponding to 100 realizations of the noise) and the
    reconstruction is taken as their mean value. The horizontal solid
    line in each figure indicates the resolving order $m_0$ given by
    \eqref{eq:max_CGPT_order} with the tolerance number $\tau_0=10^{-1}$.}
  \label{fig:err_rec_CGPT}
\end{figure}

\begin{figure}[htp]
  \centering
  \subfigure[Resolving order]{\includegraphics[width=7.5cm]{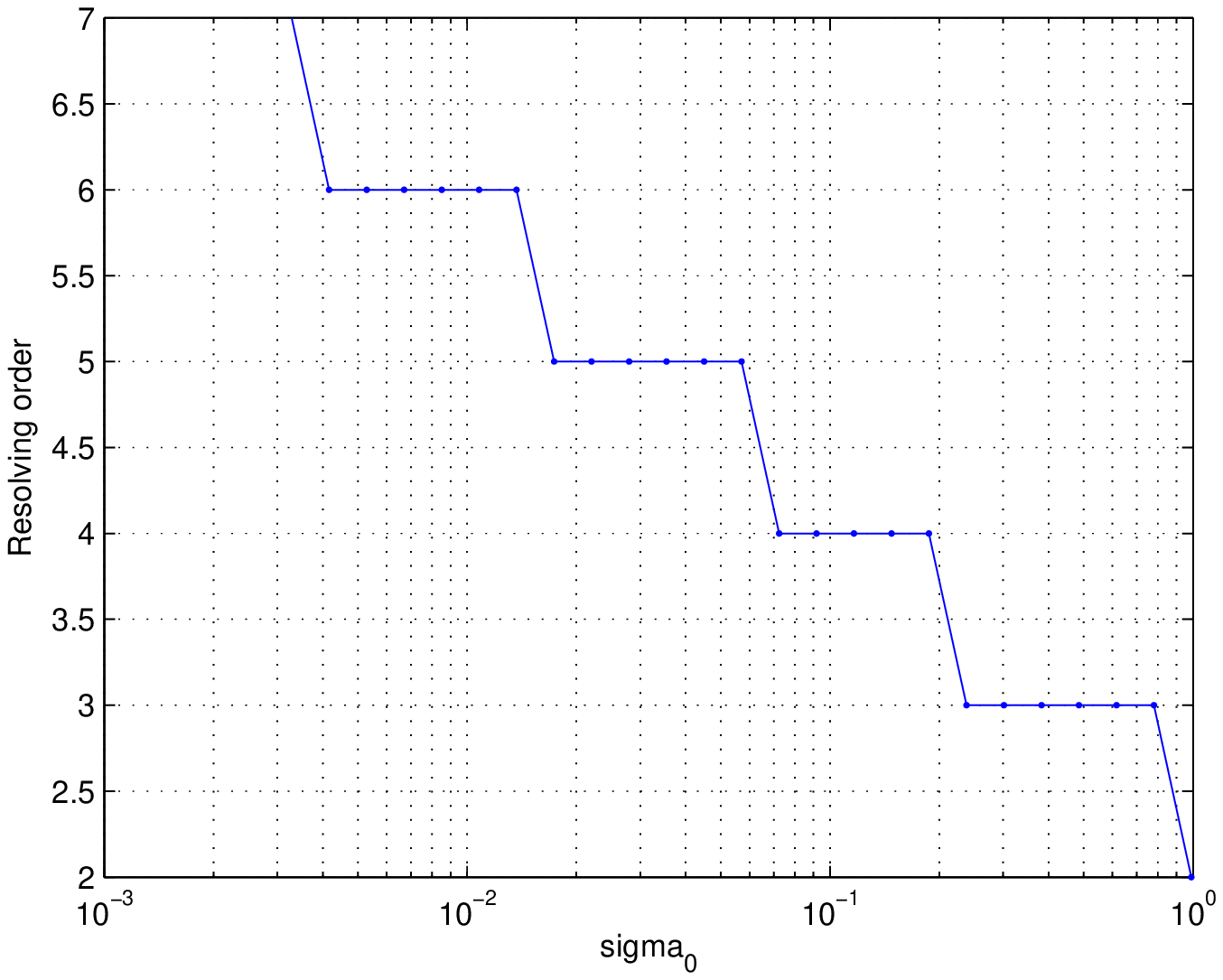}}
  \subfigure[Relative error]{\includegraphics[width=7.5cm]{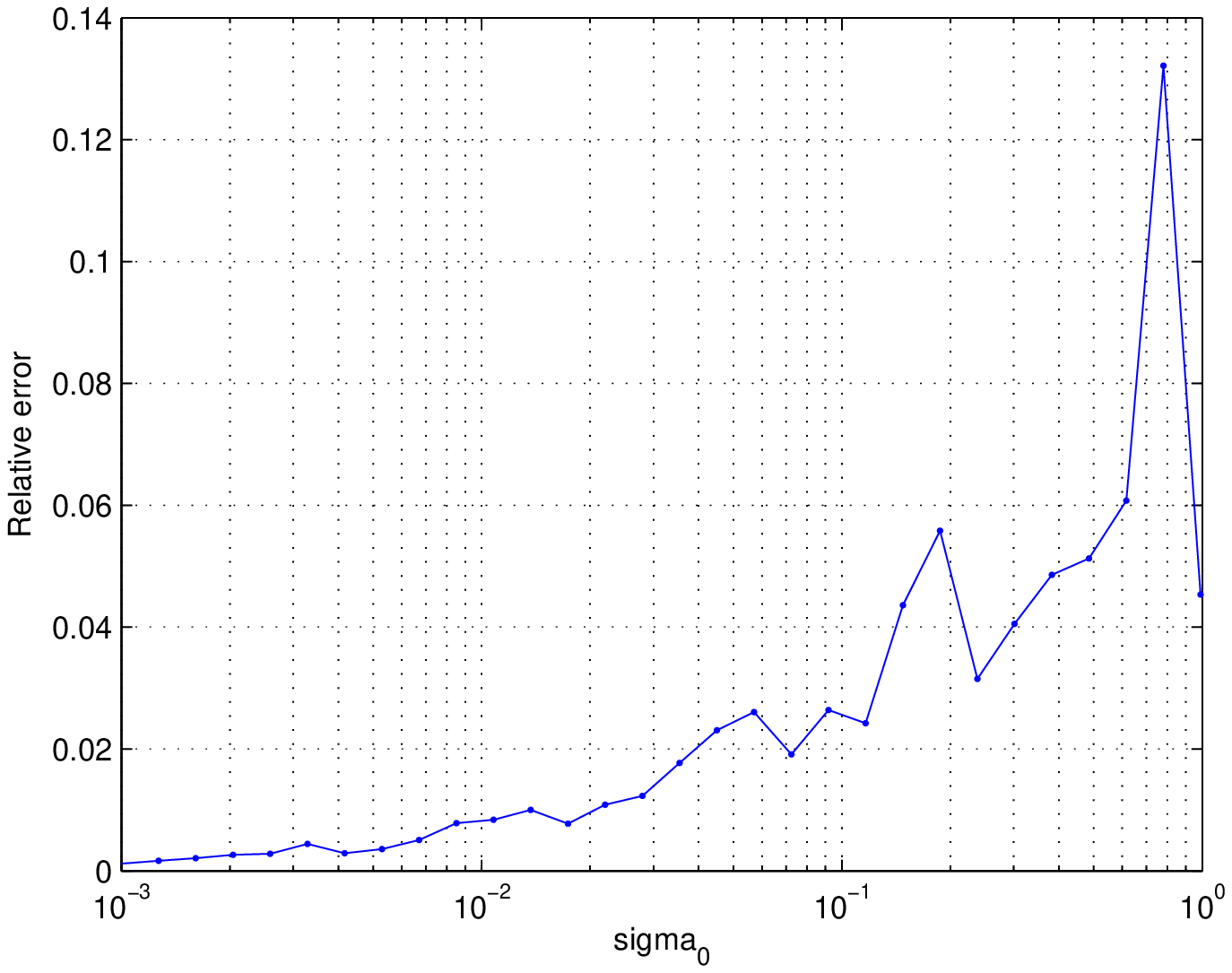}}
  \caption{The resolving order $m_0$, for $\sigma_0\in[10^{-3},1], \tau_0=10^{-1},$ and
    the relative error of the reconstruction within this order. As in
    Figure~\ref{fig:err_rec_CGPT}, we repeat the experiment 100
    times and the reconstruction is taken as their mean value. The
    large variations of the relative error in (b) for $\sigma_0>10^{-1}$ indicate the
    instability of the reconstruction for very noisy data.}
  \label{fig:rslv_ord_rel_err_CGPT}
\end{figure}

\subsection{Dictionary matching}
We are now ready to present the results of the dictionary matching
algorithms discussed in the sections~\ref{sec:cgpt-matching} and
\ref{sec:transf-invar-shape}. Unless specified, in the following
we suppose that the unknown shape of the target $D_0$ is an exact
copy of some element from the dictionary, up to a rigid transform
and dilatation. As examples, we consider a dictionary of flowers
and a dictionary of Roman letters. The aim is to identify the
target $D_0$ from imaging data if it belongs to one of the
dictionaries.

\subsubsection{Matching on a dictionary of flowers}\label{sec:match-dico-flower}
We start by considering a simple dictionary of rotation invariant
``flowers'', on which the shape identification algorithm can be
greatly simplified. The boundary of the $p$-th flower $B_p$ is
defined as a small perturbation of the standard disk:
\begin{align}
  \partial B_p(\xi) = x(\xi)(1+\eta \cos(p\xi)), \mbox{ } x(\xi)=
  \begin{pmatrix}
    \cos\xi\\ \sin\xi
  \end{pmatrix} ,
  \label{eq:flower}
\end{align}
where $p\geq 2$ is the number of petals and $\eta>0$ is a small
constant. According to Proposition \ref{prop:CGPT_rotsymm_struct},
$\No_{mn}(B_p)$ is zero if $p$ does not divide $m+n$.  For an
unknown shape $D=T_zsR_\theta B_p$, the translation parameter is
given by $z=\frac{\Nt_{12}(D)}{2\Nt_{11}(D)}$. Moreover, simple
calculations show that $\Dcrpo(D)$ and $\No(B_p)$ have exactly the
same zero patterns.

Therefore, we can find the true number of petals by searching the
first nonzero anti-diagonal entry in $\Dcrpo(D)$.

We fix $\eta=0.3$ (the amplitude of the perturbation introduced in
(\ref{eq:flower})) and $\delta/R=0.5$. The unknown shape $D_0$ is
obtained by applying the transform parameters $z=[16.3, -46.7],
s=7.5, \theta=2.69$ on $B_p$, and the reference point for data
acquisition is $z_0=[15, -45.5]$. The results for two flowers of 5
and 7 petals are shown in Figure~\ref{fig:matching_flower}, where
we plot the mean absolute value of the anti-diagonal entries $mn$,
for $m+n =l, l=2, \ldots, 11,$ in $\Dcrpo(D)$ by varying the noise
level $\sigma_0$. One can clearly distinguish the peak which
indicates the true number of petals for $\sigma_0$ up to
$10^{-2}$.

\begin{figure}[htp]
  \centering
  \subfigure[$p=5$]{\includegraphics[width=7.5cm]{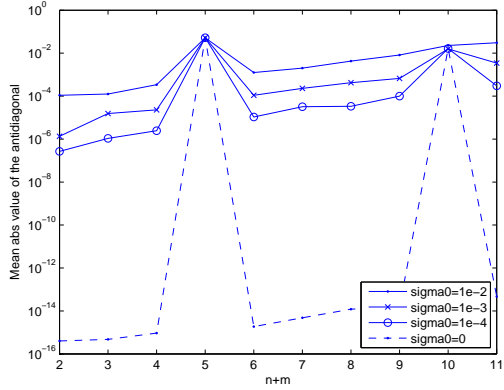}}
  \subfigure[$p=7$]{\includegraphics[width=7.5cm]{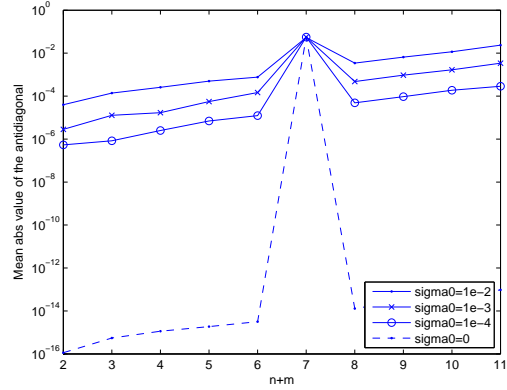}}
  \caption{Mean values of the anti-diagonal entries of $\Dcrpo$ for the flowers of
    5 and 7 petals at different noise levels.}
  \label{fig:matching_flower}
\end{figure}

\paragraph{Stability.}
Let us consider now the model \eqref{eq:flower} with a general
$\mathcal{C}^1$ function $h(\xi)$ in place of $\cos(p\xi)$. It was
proven in \cite{AGKLY11} that:
\begin{align}
  \No_{mn}(B_p) = 2\pi\eta\frac{mn}{\lambda^2}\hat
  h_{m+n} + O(\eta^2).
  \label{eq:flower_CGPT_Fourier}
\end{align}
Therefore as long as the perturbation $h(\xi)$ is close to
$\cos(p\xi)$, the significant nonzero coefficients in $\Dcrpo(D)$
will concentrate on the same anti-diagonals. We confirm this
observation by applying the same procedure above on a flower with
one damaged petal:
\begin{align}
  \label{eq:damaged_flower}
  \partial B_p(\xi) =
  \begin{cases}
    x(\xi)f(\xi, t) \ &\mbox{ for } \xi\in[0,2\pi/p),\\
    \nm
    x(\xi)(1+\eta \cos(p\xi)) \ &\mbox{ for } \xi\in[2\pi/p,
    2\pi).
  \end{cases}
\end{align}
Here, $f(\cdot, t):\R\mapsto\R$ is a polynomial of order 6,
constructed such that $\partial B_p$ is $\mathcal{C}^2$-smooth,
and $t \in (0,1)$ is the percentage of the damage; see
Figure~\ref{fig:imperfect_flower}. In
Figure~\ref{fig:matching_imperfect_flower} we plot the mean value
of the anti-diagonal entries at different noise levels. Compared
to Figure~\ref{fig:matching_flower}, we see that the effect of the
damage in the petal dominates the measurement noise. Nonetheless,
the peak indicating the true number of petals is still visible.

\begin{figure}[htp]
  \centering
  \subfigure[$\tau_0=0.5$]{\includegraphics[width=7.5cm]{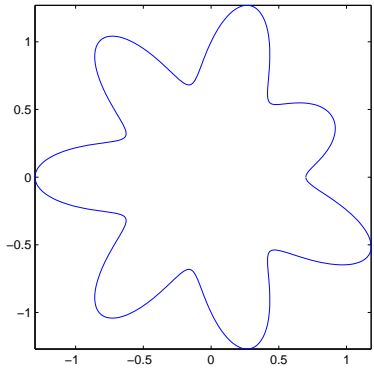}}
  \subfigure[$\tau_0=0.8$]{\includegraphics[width=7.5cm]{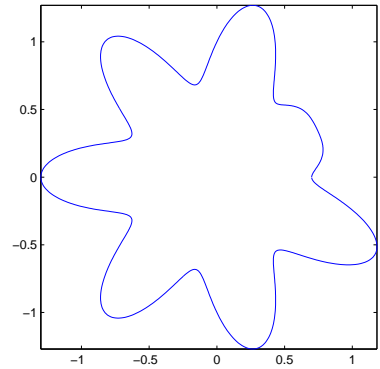}}
  \caption{Flowers with one damaged petal. The following parameters
    are used in \eqref{eq:damaged_flower}: $p=7$, $\eta=0.3$,
    $t=0.5$ for (a) and $t=0.8$ for (b).}
  \label{fig:imperfect_flower}
\end{figure}

\begin{figure}[htp]
  \centering
  \subfigure[$\tau_0=0.5$]{\includegraphics[width=7.5cm]{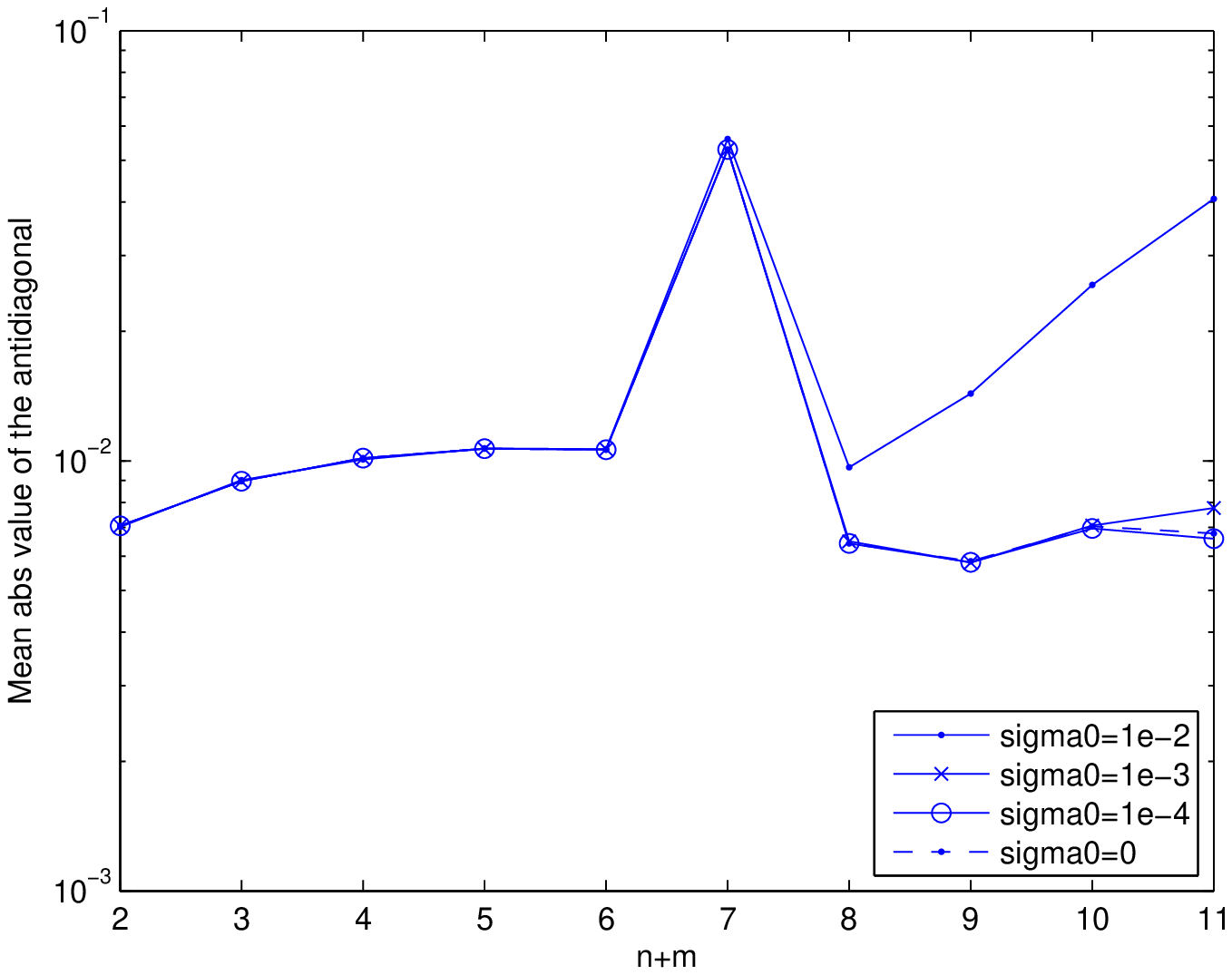}}
  \subfigure[$\tau_0=0.8$]{\includegraphics[width=7.5cm]{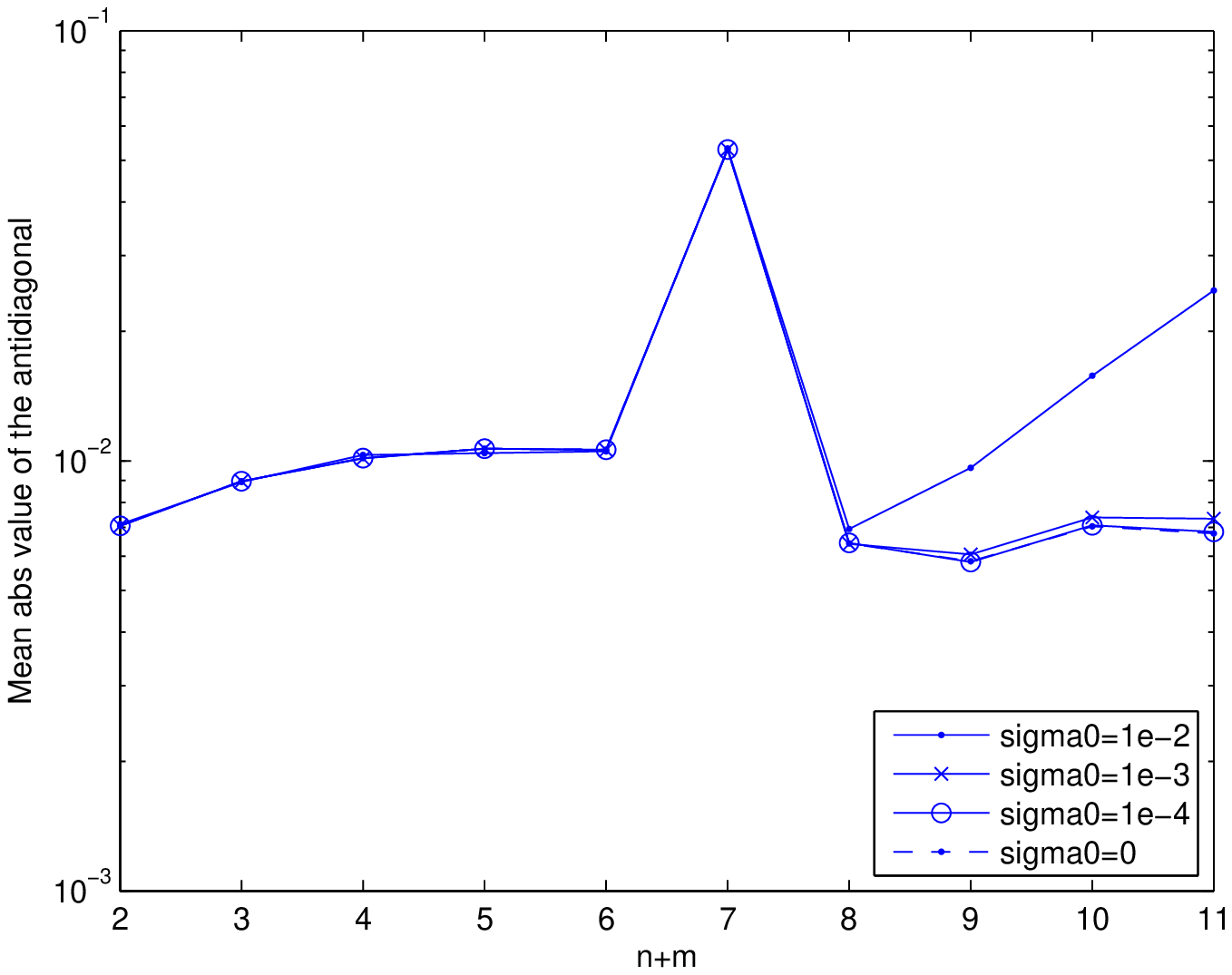}}
  \caption{Mean value of the anti-diagonal entries of $\Dcrpo$ for the flowers of
    Figure~\ref{fig:imperfect_flower} at different noise levels. The peaks indicate the number of petals.}
  \label{fig:matching_imperfect_flower}
\end{figure}

\subsubsection{Dictionary of letters}
Next we consider here a dictionary consisting of 26 Roman capital
letters without rotational symmetry. The shapes are defined in
such a way that the holes inside the letters are filled, see
Figure~\ref{fig:all_letters_A_Z}. We set $\delta/R=0.5$, $s=
2.4762, \theta = 6.0827, z= [33.3505, 73.8395]$ and the center of
mass of the target at $[33.4042, 73.8627]$.

\paragraph{Performance of Algorithm 1.}

First we test Algorithm \ref{algo:shape-ident-cgpt} on the letter
``P''. For the noiseless case ($\sigma_0=0$), the values of $e_n$
defined in Algorithm~\ref{algo:shape-ident-cgpt} are plotted in
Figure~\ref{fig:matching_letter_p} (a) and (b). These results
suggest that the high order CGPTs can better distinguish similar
shapes such as ``P'' and ``R'', since they contain more high
frequency information \cite{AGKLY11}. Nonetheless, the advantage
of using high order CGPTs drops quickly when the data are
contaminated by noise, and low order CGPTs provide more stable
results in this situation, see Figure~\ref{fig:matching_letter_p}
(c) and (d).

\begin{figure}[htp]
  \centering
  \subfigure[$\sigma_0=0$, order $\leq 2$]{\includegraphics[width=7.5cm]{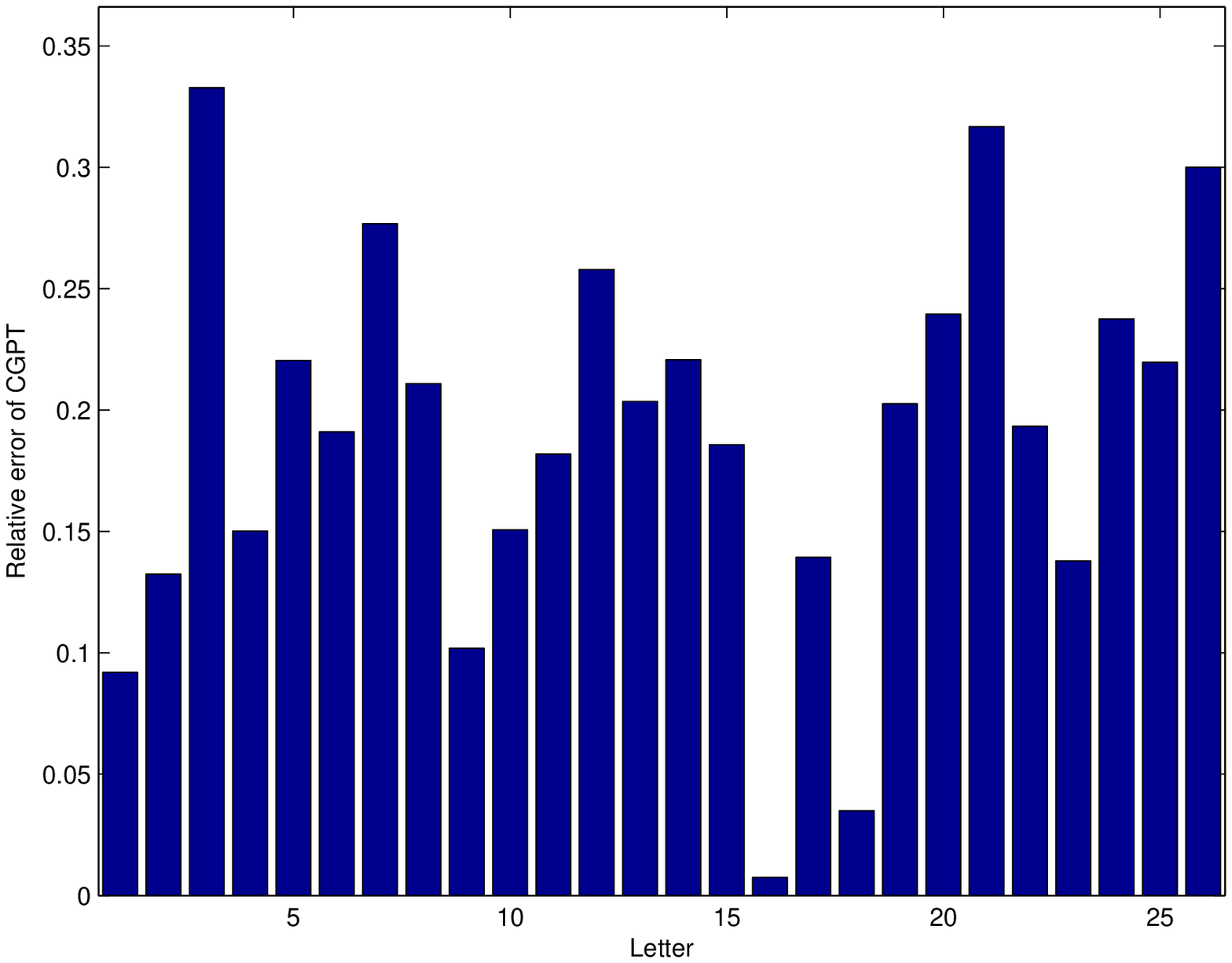}}
  \subfigure[$\sigma_0=0$, order $\leq 5$]{\includegraphics[width=7.5cm]{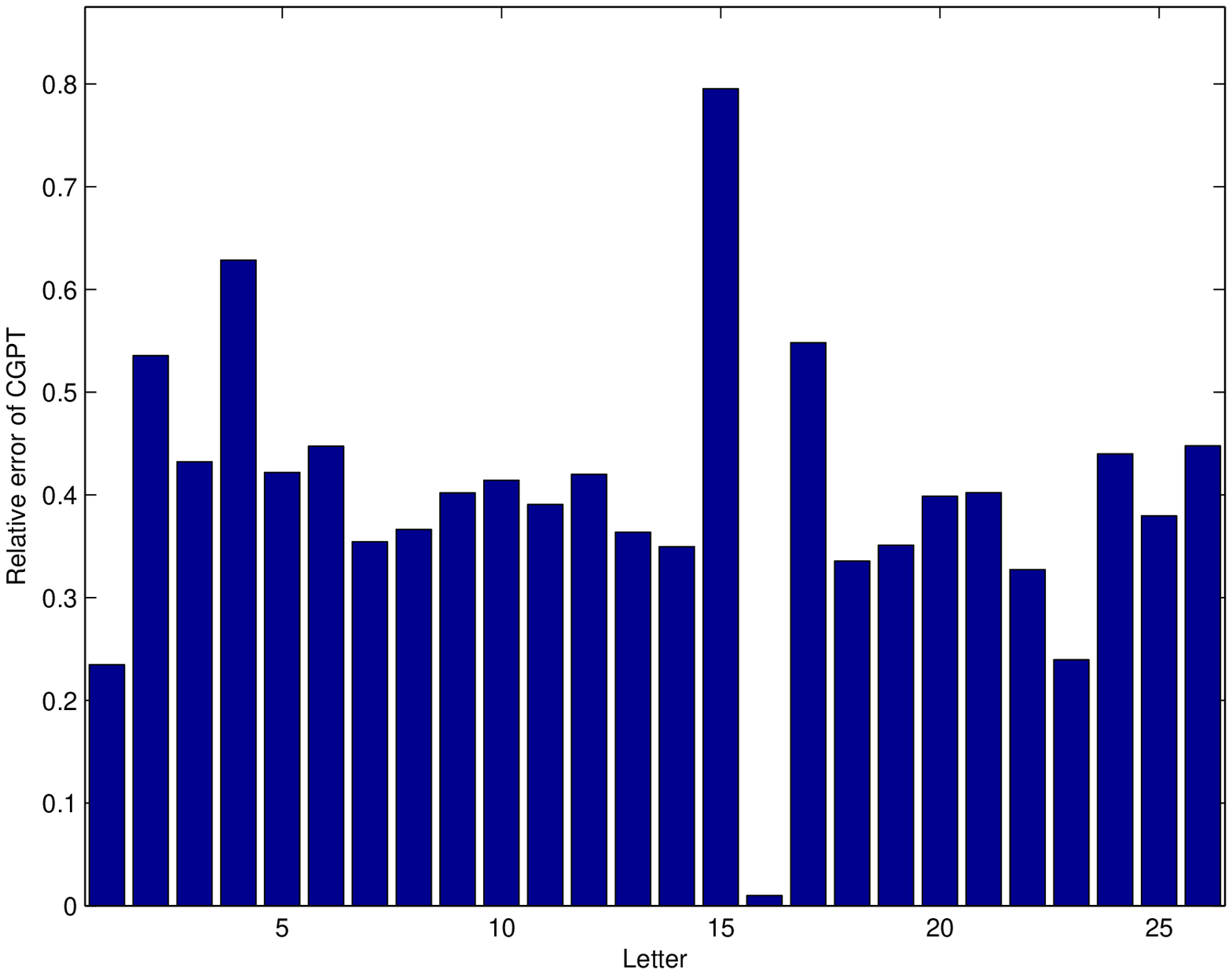}}
  \subfigure[$\sigma_0=0.1$, order $\leq 2$]{\includegraphics[width=7.5cm]{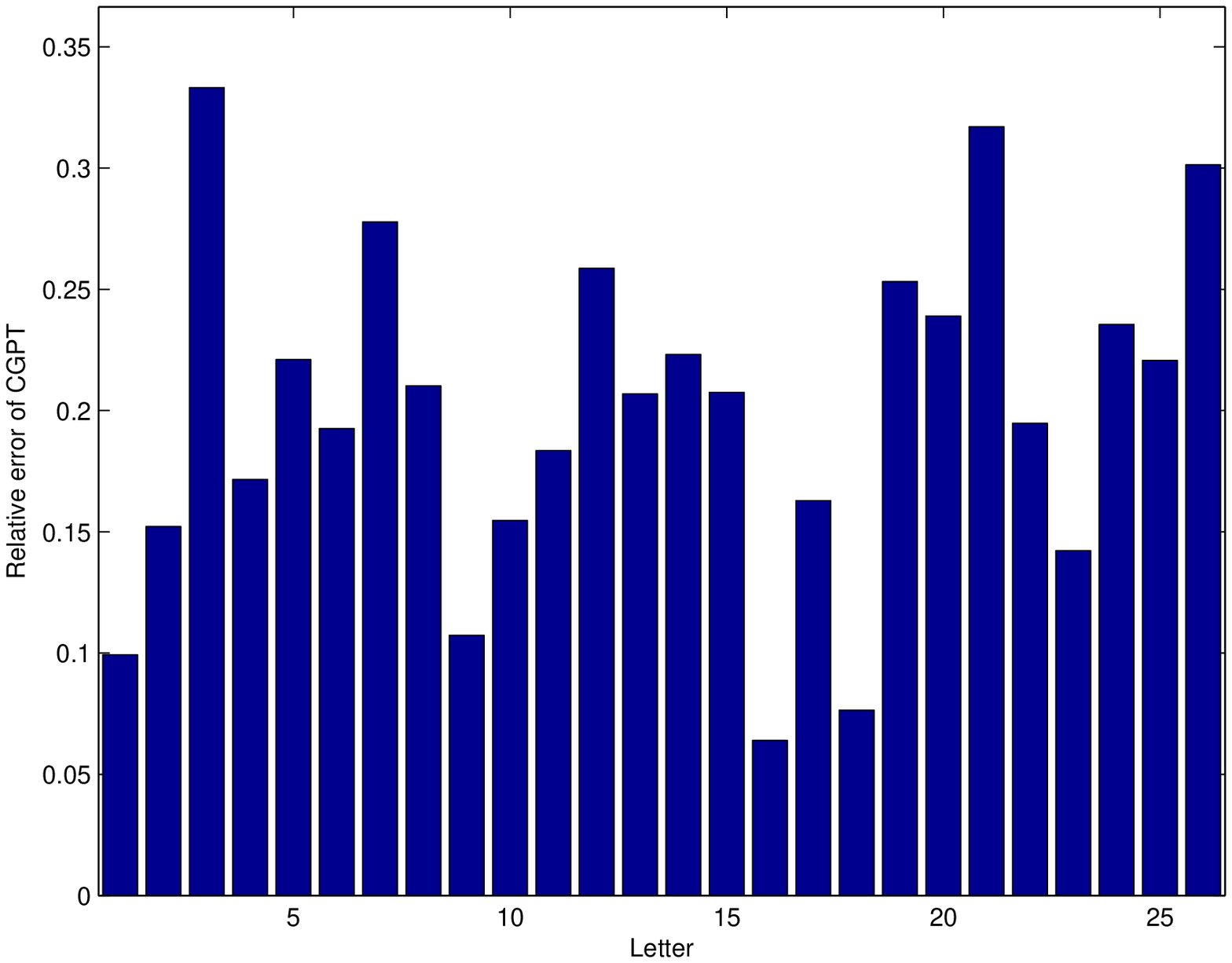}}
  \subfigure[$\sigma_0=0.1$, order $\leq 5$]{\includegraphics[width=7.5cm]{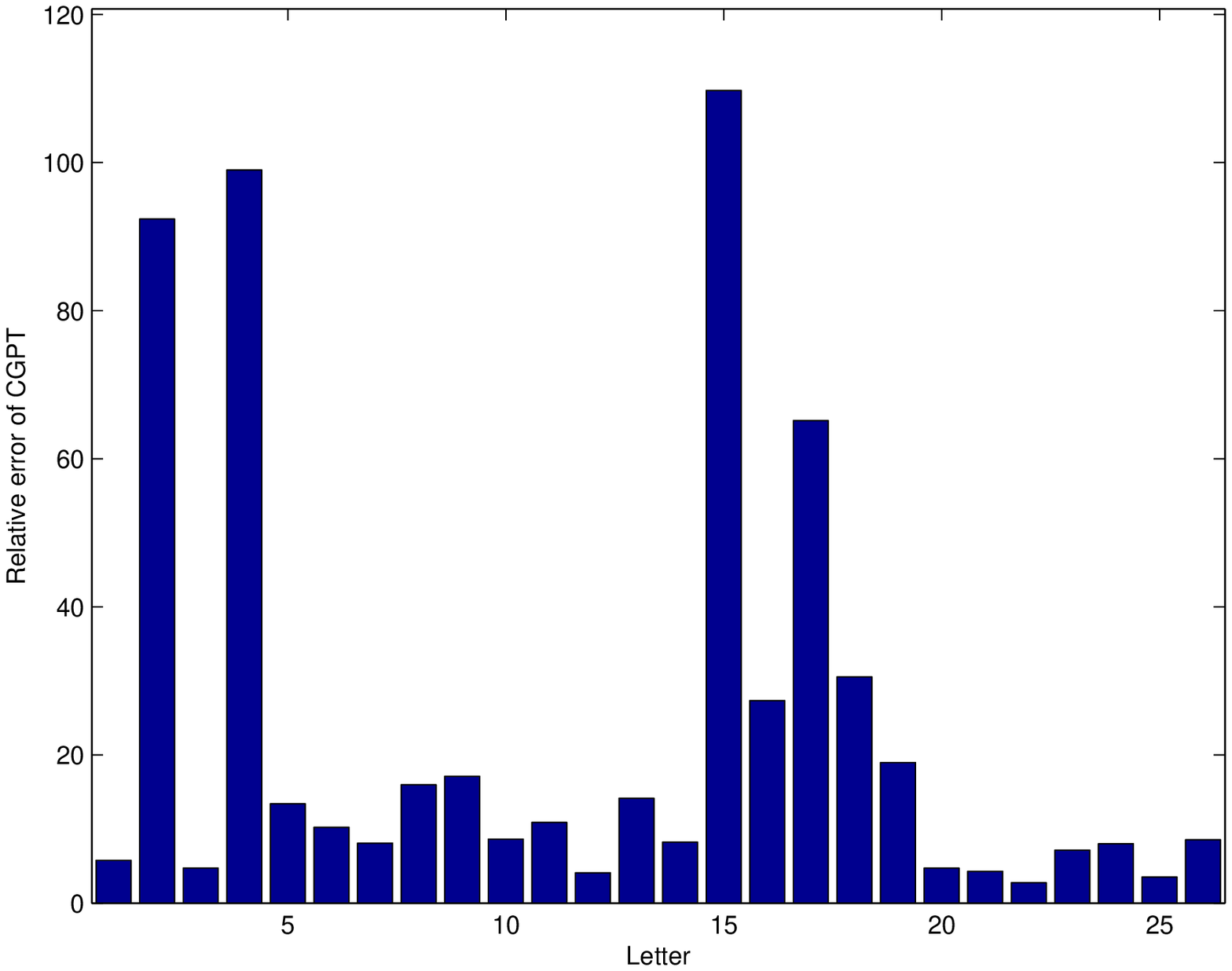}}
  \caption{The identification of the letter ``P'' using the first 2,
    and 5 orders CGPTs at noise levels $\sigma_0=0$ and
    $\sigma_0=0.1$. The bar represents the relative error $e_n$
    between the CGPTs of the $n$-th letter and that of the data, as
    defined in Algorithm~\ref{algo:shape-ident-cgpt}, and the shortest
    one in each figure corresponds to the identified letter. For (c)
    and (d), the experiment has been repeated for 100 times, using
    independent draws of white noise, and the results are the mean
    values of all experiments.}
  \label{fig:matching_letter_p}
\end{figure}

By repeating the same procedure as above, we apply
Algorithm~\ref{algo:shape-ident-cgpt} on all letters at noise
levels $\sigma_0=0$ and $\sigma_0=0.1$, and show the result in
Figure~\ref{fig:matching_all_letters} (a) and (c). At the
coordinate $(m,n)$, the unknown shape is the $m$-th letter and the
color represents the relative error (in logarithmic scale) of the
CGPTs when compared with the $n$-th standard letter of the
dictionary.


\begin{figure}[htp]
  \centering
  \subfigure[$\sigma_0=0$, order $\leq 5$, Standard letters]{\includegraphics[width=7.5cm]{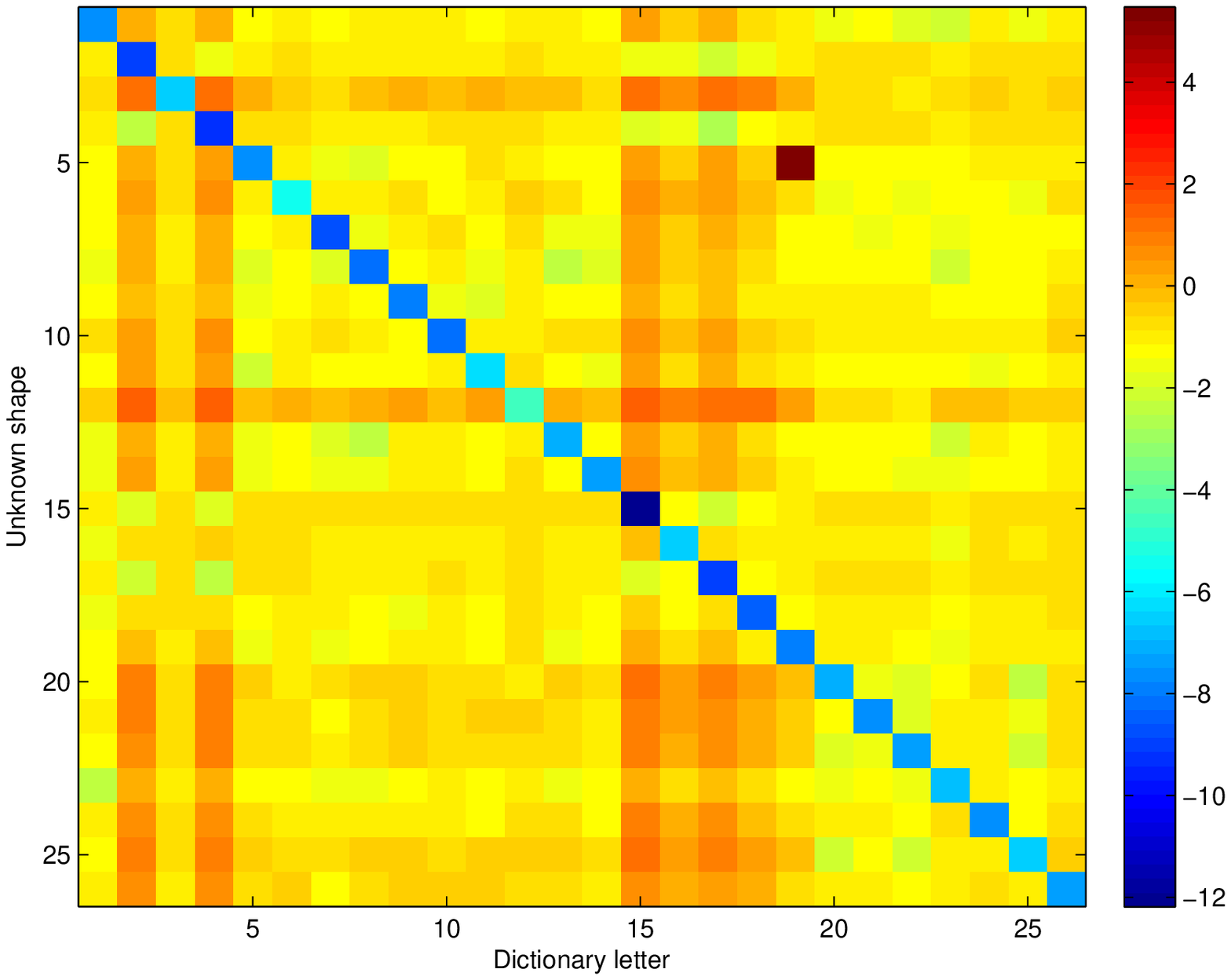}}
  \subfigure[$\sigma_0=0$, order $\leq 5$, Perturbed letters]{\includegraphics[width=7.5cm]{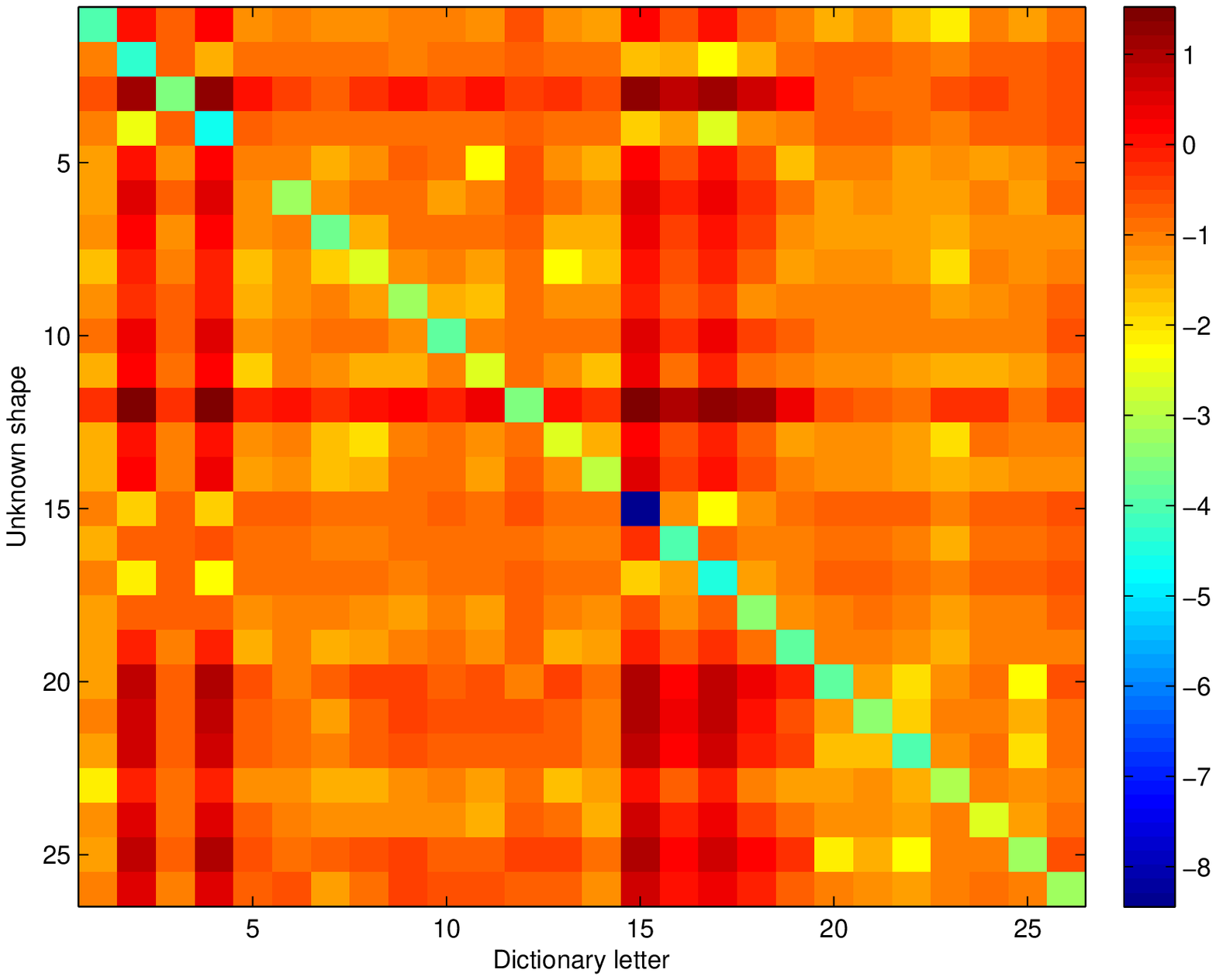}}
   \subfigure[$\sigma_0=0.1$, order $= 1$, Standard letters]{\includegraphics[width=7.5cm]{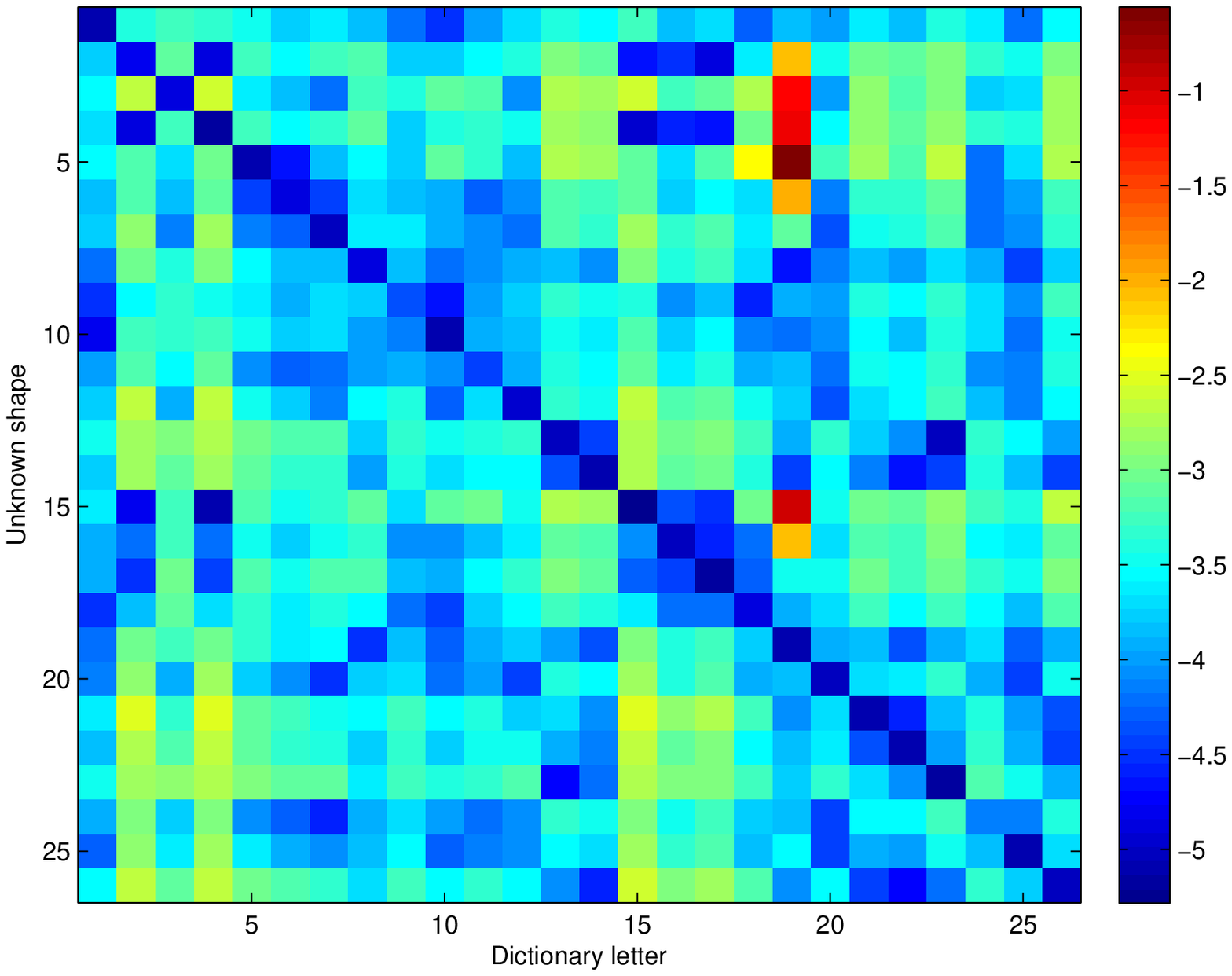}}
  \subfigure[$\sigma_0=0.1$, order $= 1$, Perturbed letters]{\includegraphics[width=7.5cm]{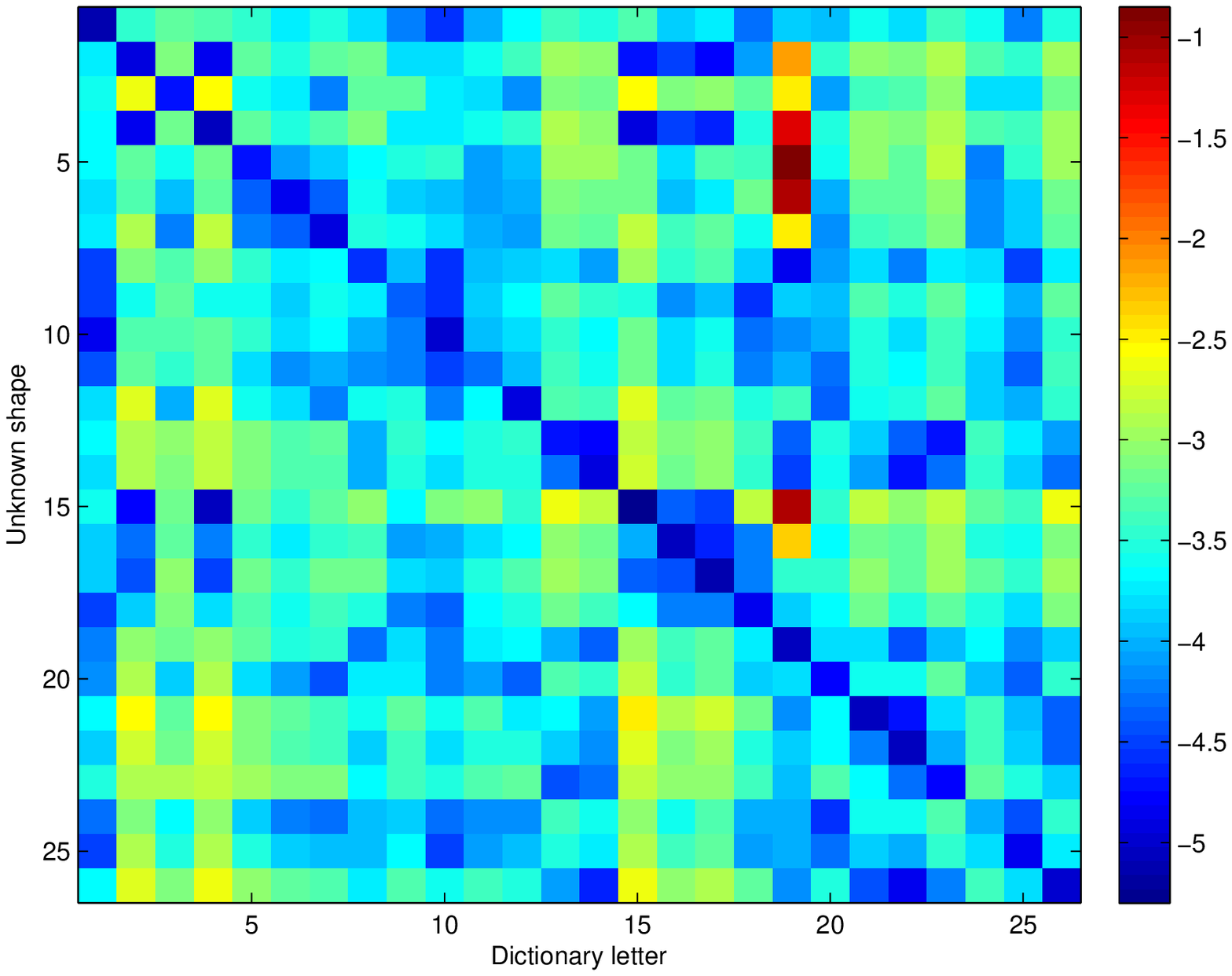}}
  \caption{Algorithm~\ref{algo:shape-ident-cgpt} applied on the all 26
    letters using the standard dictionary
    (Figure~\ref{fig:all_letters_A_Z}) at noise level $\sigma_0=0$
    (first column) and $\sigma_0=0.1$ (second column), with the color
    indicating the relative error $e_n$ in logarithmic scale. The
    unknown shapes in the first row are exact copies of the standard
    dictionary, and in the second row are those of
    Figure~\ref{fig:all_ptb_letters_A_Z}. In (a) all letters are
    correctly identified, while in (b) letters 'E' is identified as
    'H'. For the noisy case, the experiment has been repeated 100
    times, using independent draws of white noise, and the results in
    (c) and (d) are the mean values of all experiments, where only the
    first order CGPT is taken into account. 22 and 21 letters
    are correctly identified in (c) and (d), respectively.}
  \label{fig:matching_all_letters}
\end{figure}

\paragraph{Stability.}
In real world applications we would like to have
Algorithm~\ref{algo:shape-ident-cgpt} work also on letters which
are not exact copies of the dictionary, such as handwriting
letters. Figure~\ref{fig:all_ptb_letters_A_Z} shows the letters
obtained by perturbing and smoothing the dictionary elements. With
these letters as unknown shapes, we repeat the experiment of
Figure~\ref{fig:matching_all_letters} (a) and (c) by applying
Algorithm \ref{algo:shape-ident-cgpt} on the standard dictionary
and show the results in Figure~\ref{fig:matching_all_letters} (b)
and (d). Comparing with the results of
Figure~\ref{fig:matching_all_letters} (a) and (c), we see that
Algorithm~\ref{algo:shape-ident-cgpt} remains quite stable,
despite of some slight degradations.

\paragraph{Performance of Algorithm 2.}
In the case of noiseless data,
Algorithm~\ref{algo:shape-ident-inv} provides correct results with
low computational cost. Here we repeat the experiment in
Figure~\ref{fig:matching_letter_p} (a) and (c) using
Algorithm~\ref{algo:shape-ident-inv}, and plot the error $e_n$
defined in Algorithm~\ref{algo:shape-ident-inv} in
Figure~\ref{fig:matching_all_letters_inv}. Nonetheless, when data
are noisy, Algorithm~\ref{algo:shape-ident-cgpt} performs
significantly better than Algorithm~\ref{algo:shape-ident-inv}, as
shown by Figure~\ref{fig:algorithm_cgpt_vs_inv} where we compare
the two algorithms for identifying letter ``P'' at various noise
levels. Thanks to the debiasing step \eqref{eq:tsr_lst_debiasing},
Algorithm~\ref{algo:shape-ident-cgpt} is much more robust with
respect to noise than Algorithm~\ref{algo:shape-ident-inv}, in
which there is no debiasing and the invariance of the shape
descriptors $\Dcrpo$ and $\Dcrpt$ may be severely affected by
noise (see Figure \ref{fig:algorithm_cgpt_vs_inv}).

\begin{figure}[htp]
  \centering
  \subfigure[$\sigma_0=0$, order $\leq 5$, Standard letters]{\includegraphics[width=7.5cm]{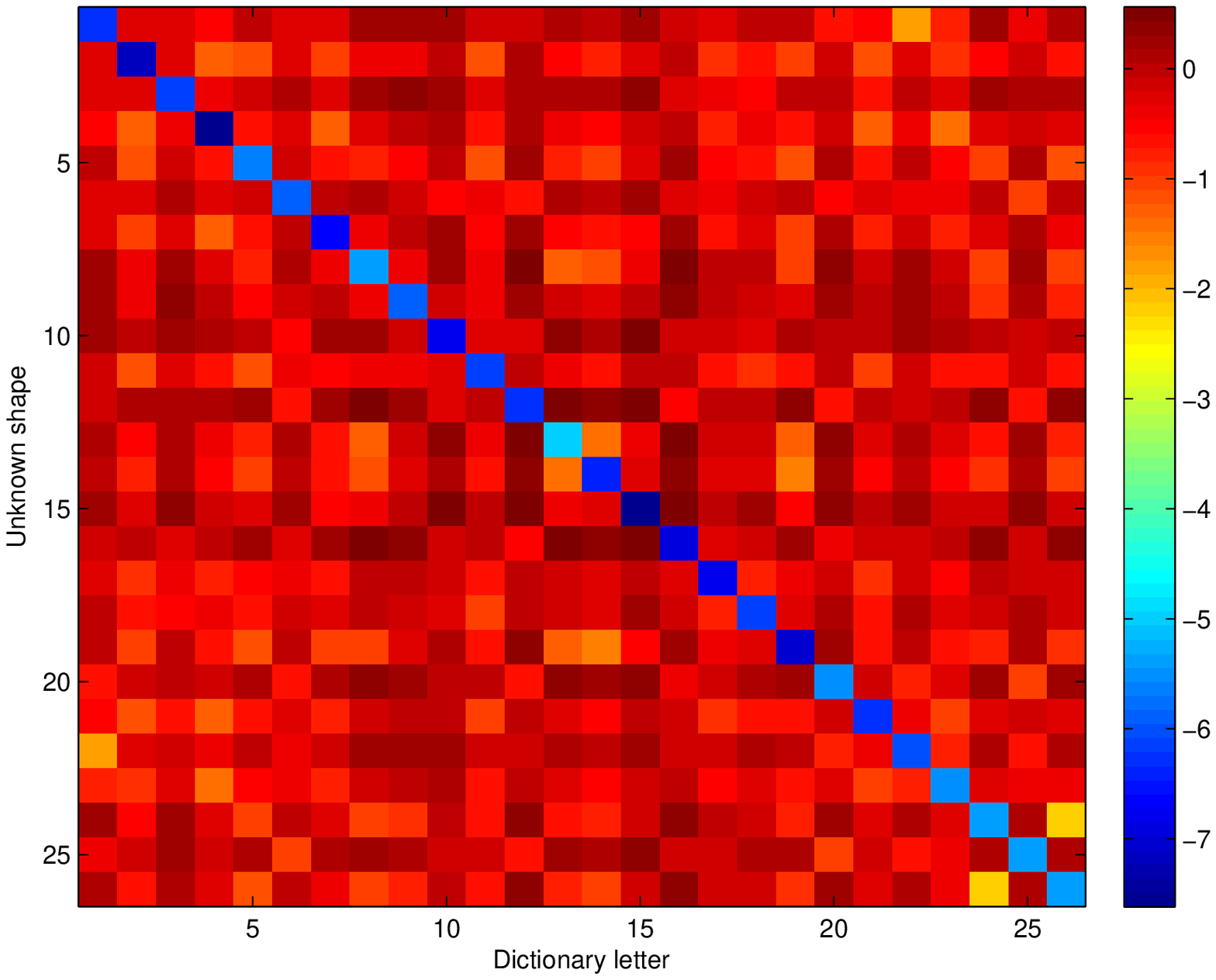}}
  \subfigure[$\sigma_0=0$, order $\leq 5$, Perturbed letters]{\includegraphics[width=7.5cm]{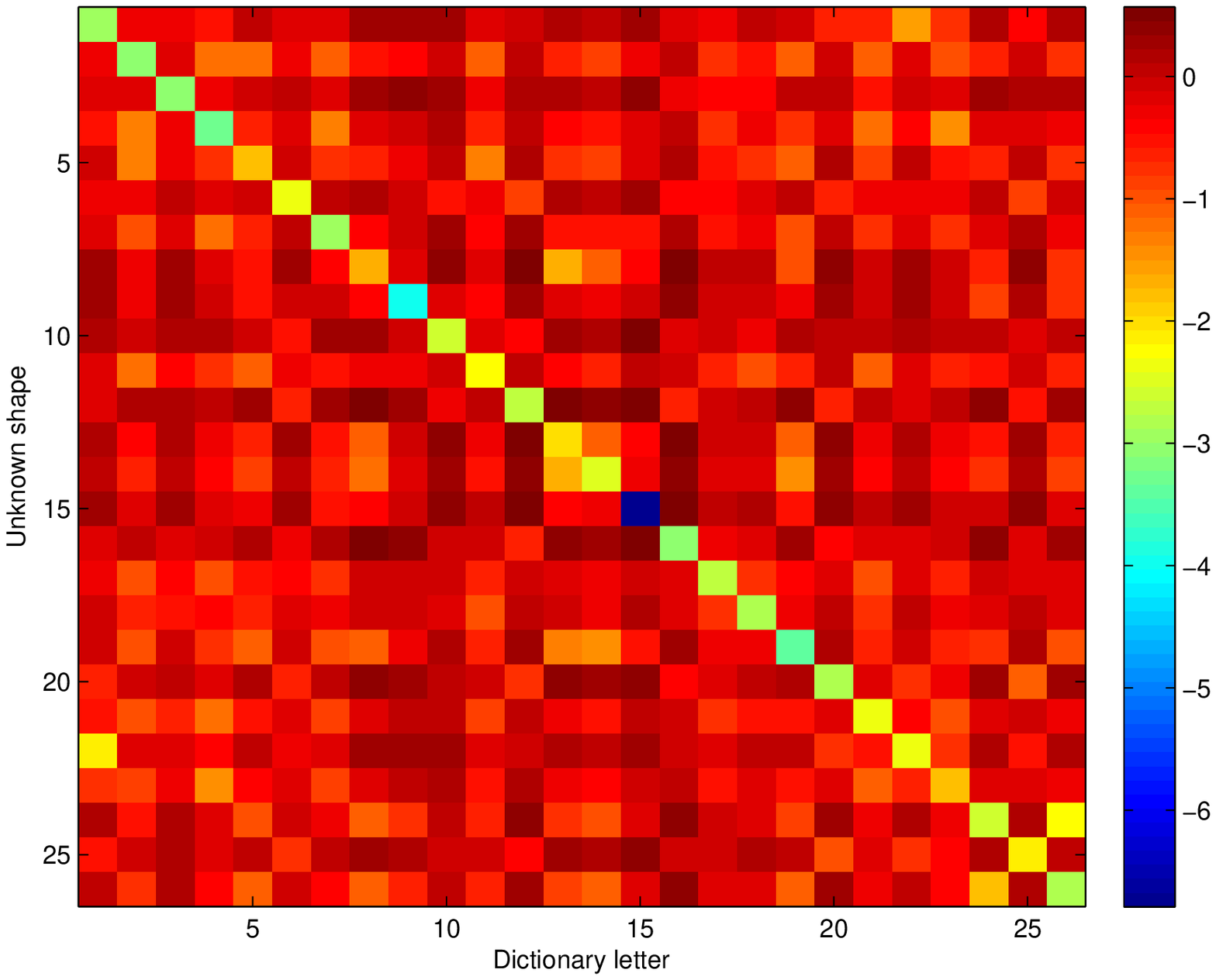}}
  \caption{Algorithm~\ref{algo:shape-ident-inv} applied on the all 26
    letters using the standard dictionary
    (Figure~\ref{fig:all_letters_A_Z}) at noise level
    $\sigma_0=0$. The unknown shapes in (a) are exact copies of the
    standard dictionary, while in (b) are those of
    Figure~\ref{fig:all_ptb_letters_A_Z}. The color indicates the
    error $e_n$ in logarithmic scale. All letters are correctly
    identified in both (a) and (b).}
  \label{fig:matching_all_letters_inv}
\end{figure}

\begin{figure}[htp]
  \centering
  \subfigure[order $\leq 2$]{\includegraphics[width=7.5cm]{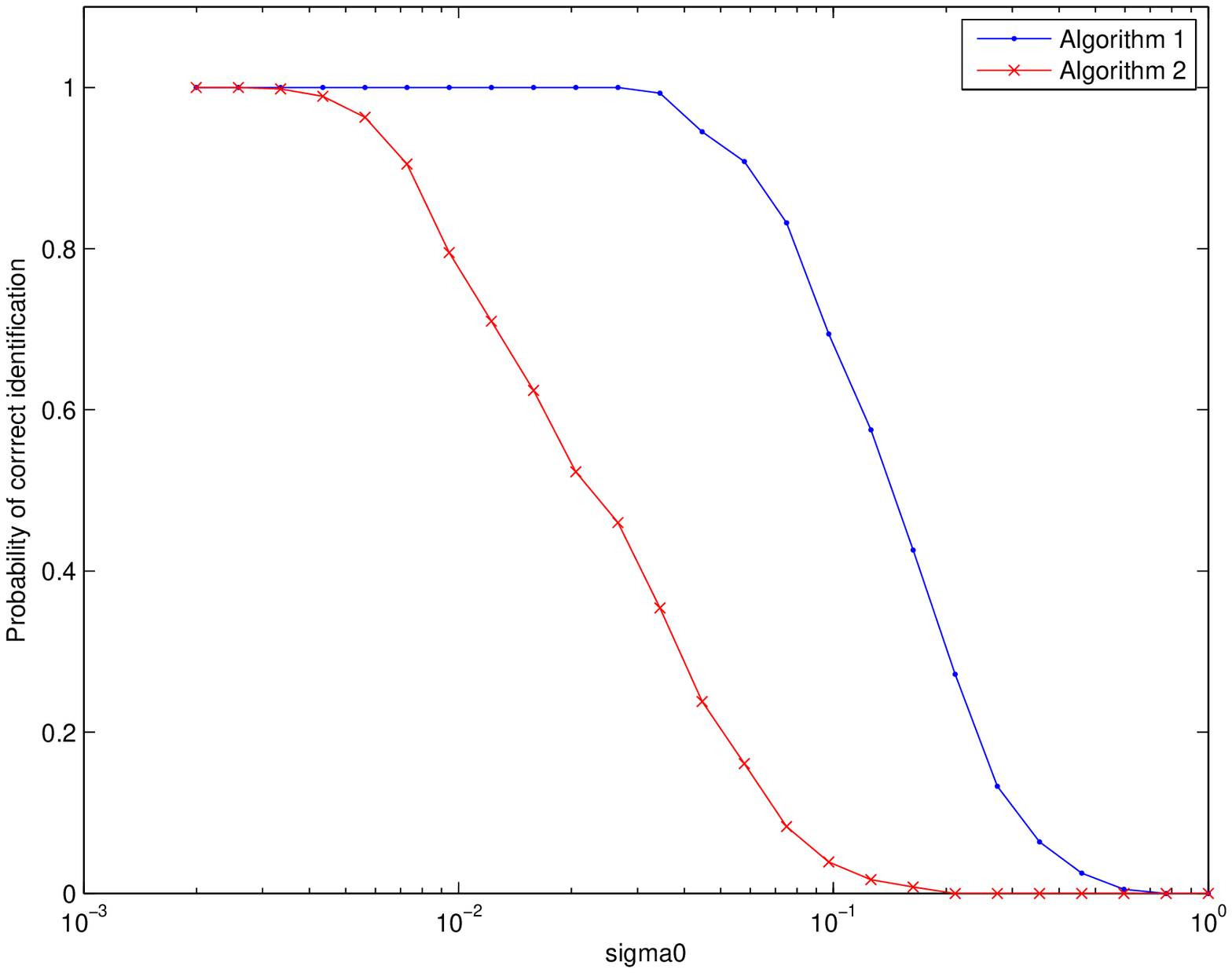}}
  \subfigure[order $\leq 3$]{\includegraphics[width=7.5cm]{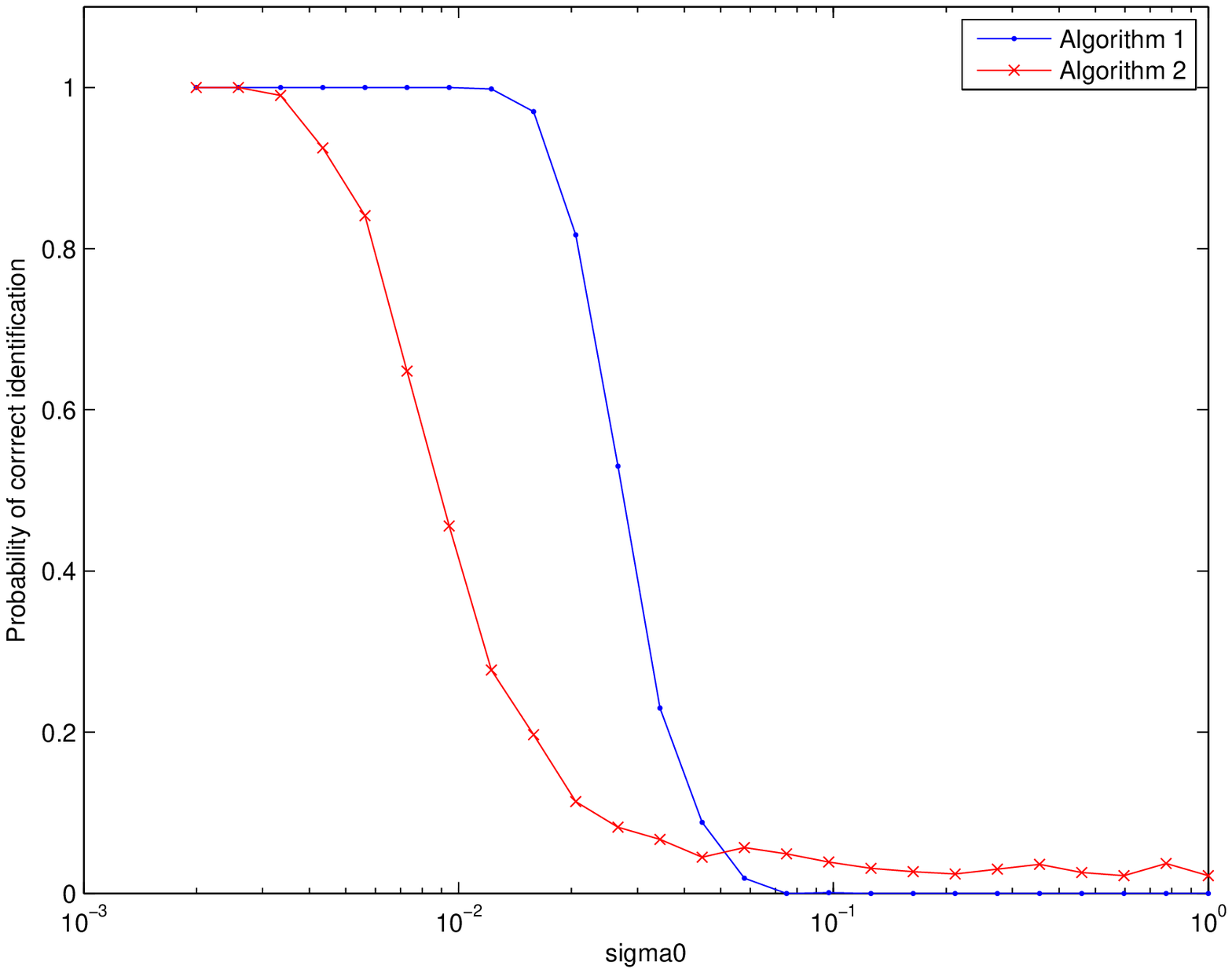}}
  \caption{Comparison of Algorithm~\ref{algo:shape-ident-inv} and
    Algorithm~\ref{algo:shape-ident-cgpt} on identification of the
    standard letter ``P''. At each noise level, the experiment has
    been repeated 1000 times, using independent draws of white
    noise. For each algorithm, the curve represents the percentage of
    experiments where the letter ``P'' is correctly identified. }
  \label{fig:algorithm_cgpt_vs_inv}
\end{figure}

\def\lettersize{2.7cm}

\begin{figure}[htp]
  \centering
  \subfigure[]{\includegraphics[width=\lettersize]{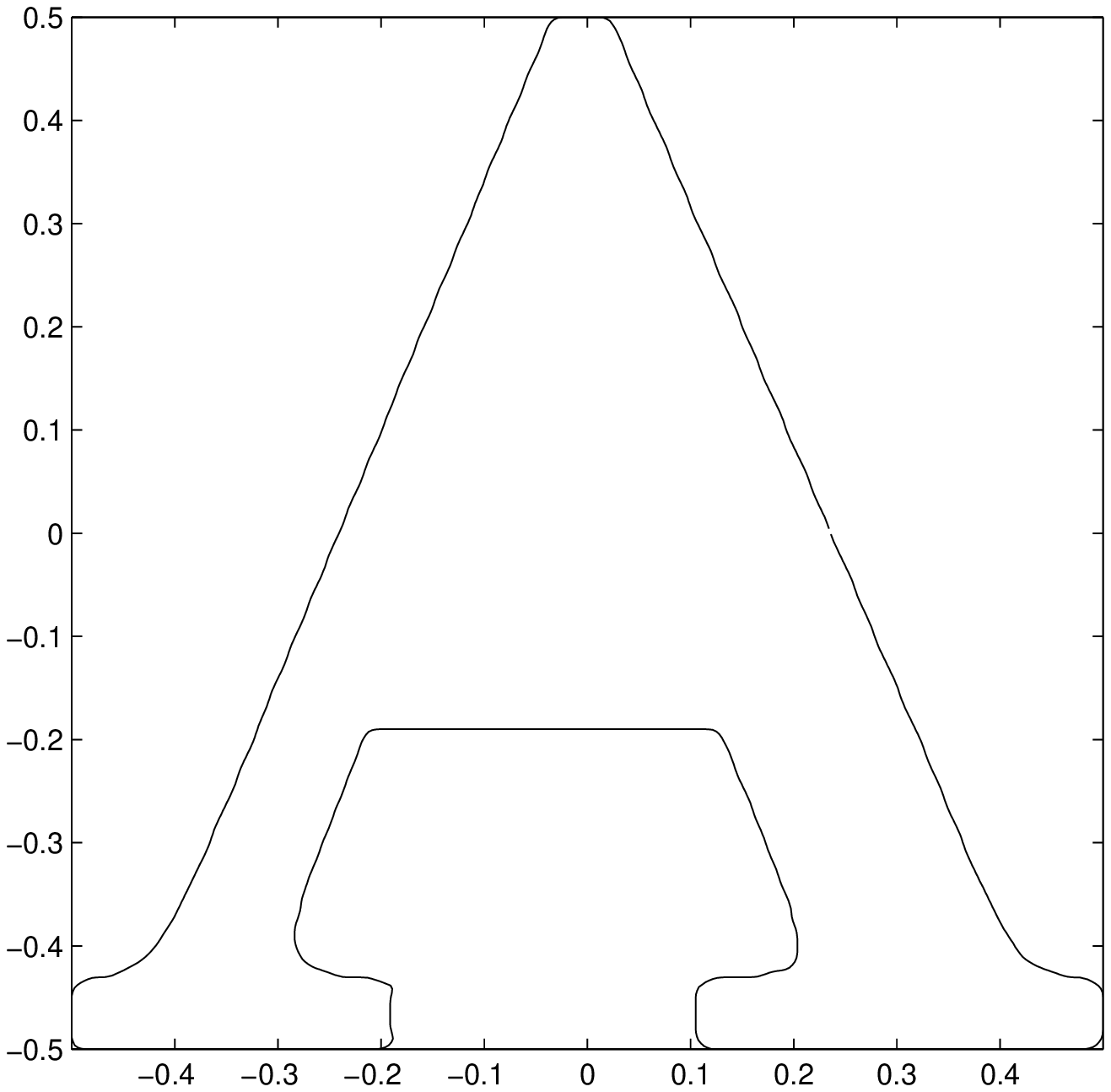}}
  \subfigure[]{\includegraphics[width=\lettersize]{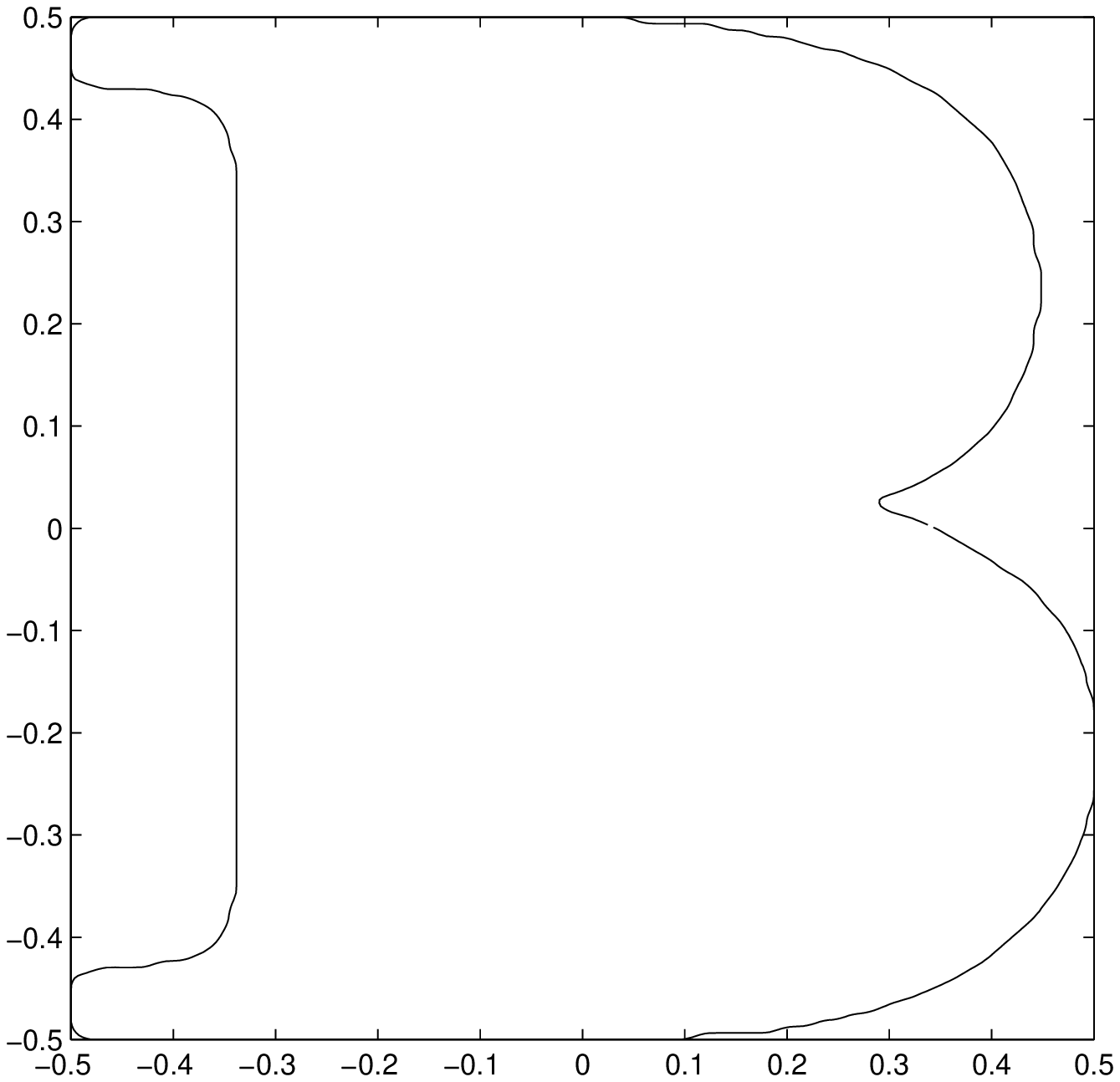}}
  \subfigure[]{\includegraphics[width=\lettersize]{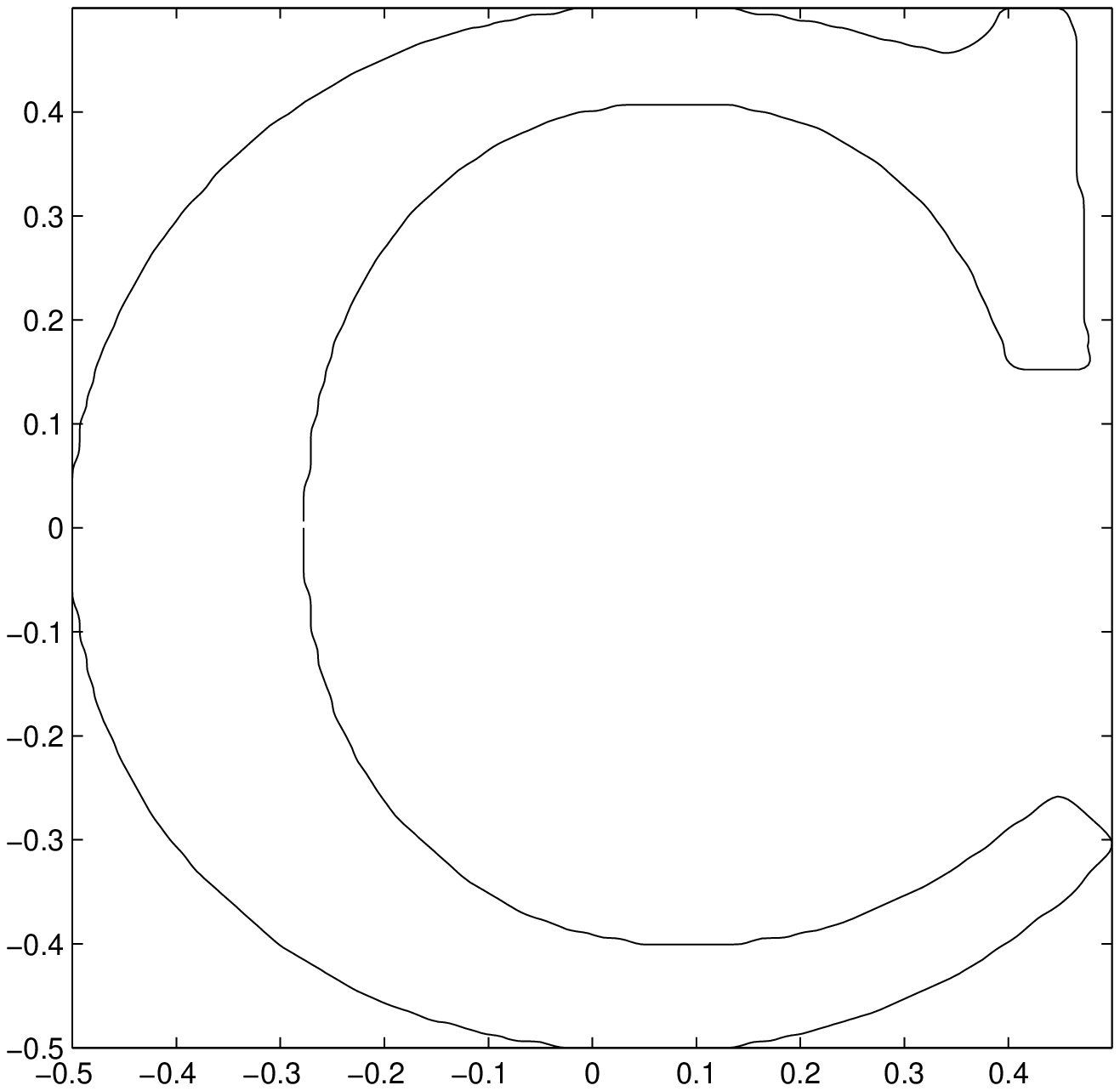}}
  \subfigure[]{\includegraphics[width=\lettersize]{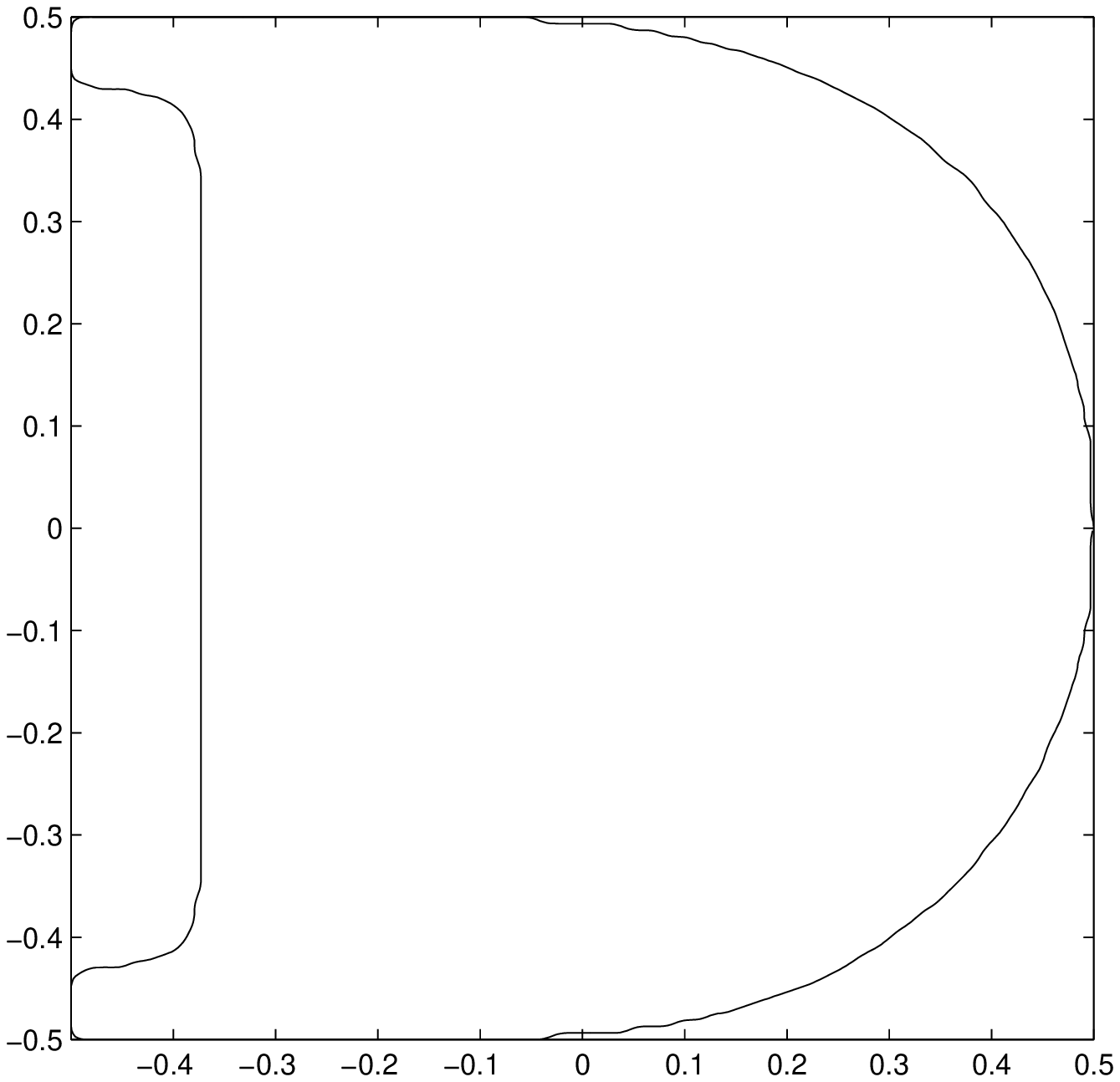}}
  \subfigure[]{\includegraphics[width=\lettersize]{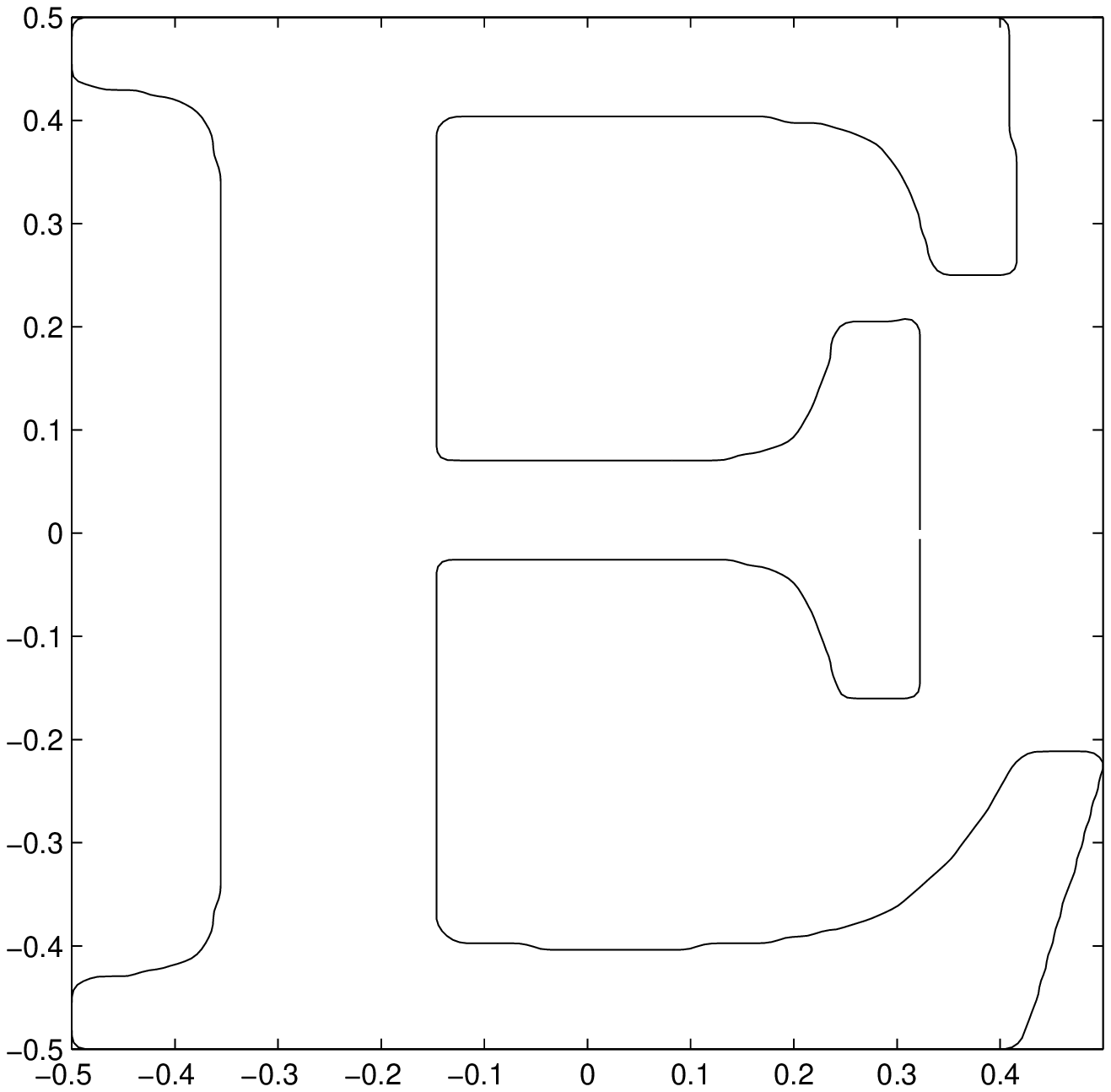}}
  \subfigure[]{\includegraphics[width=\lettersize]{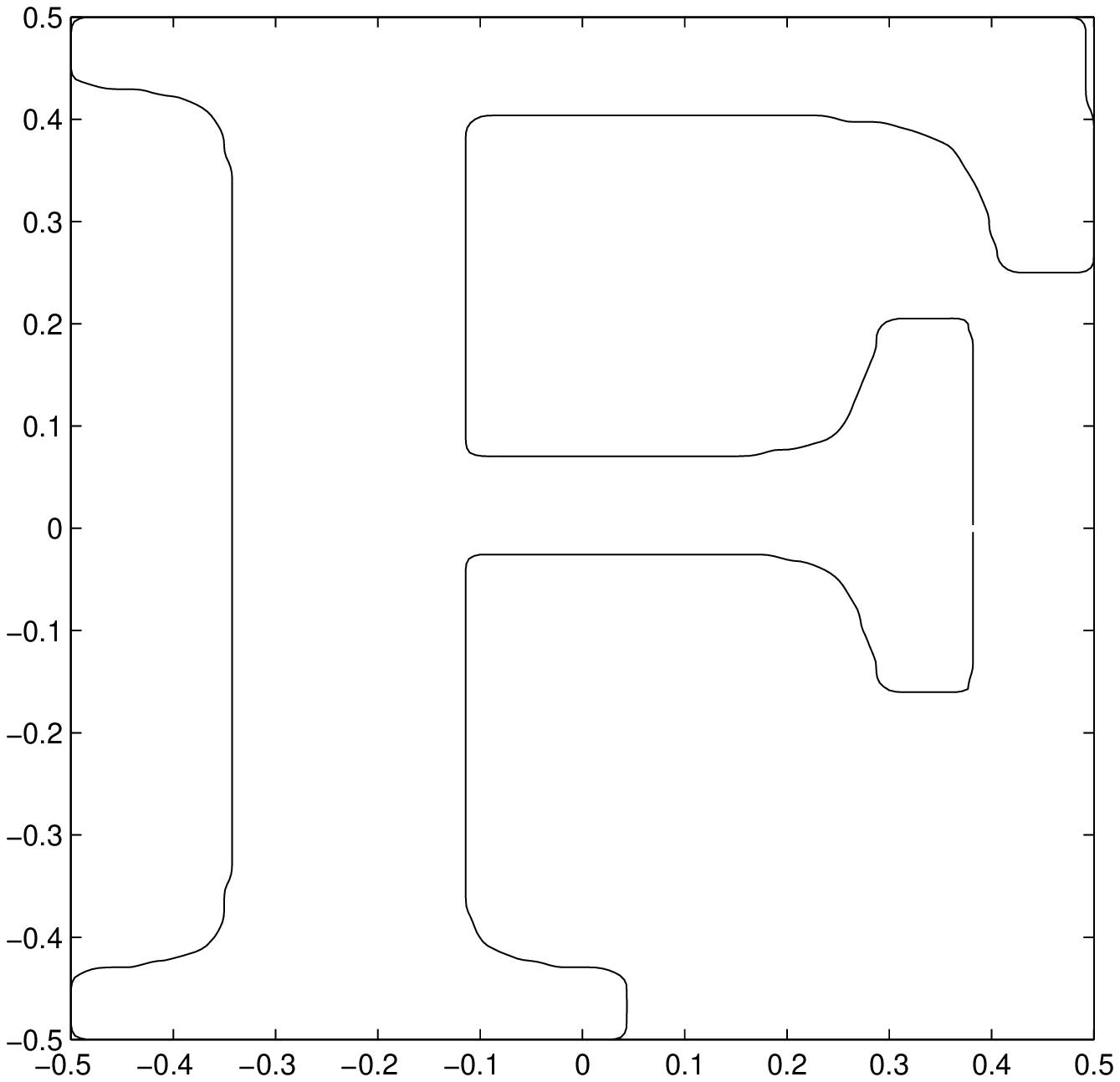}}
  \subfigure[]{\includegraphics[width=\lettersize]{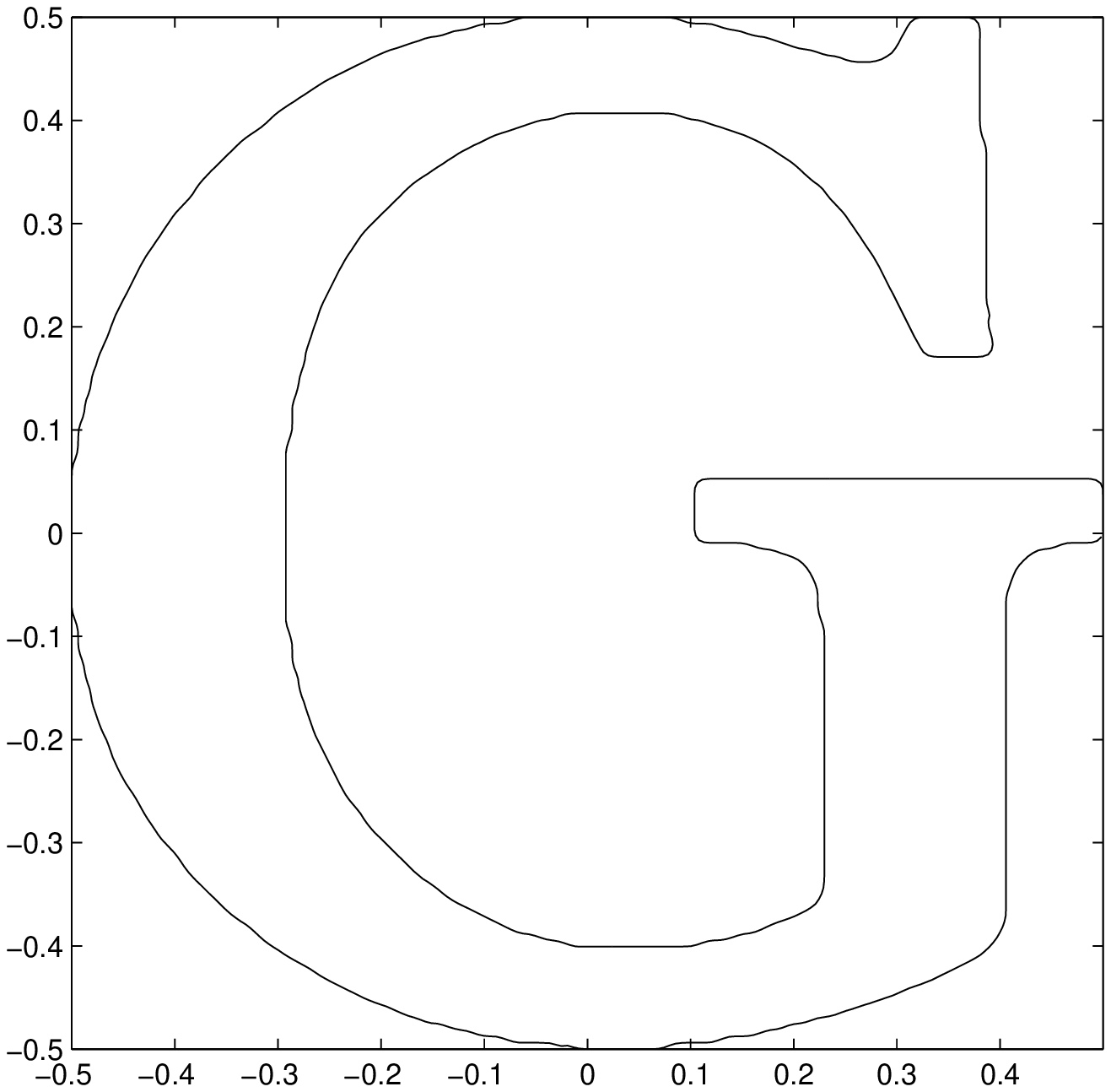}}
  \subfigure[]{\includegraphics[width=\lettersize]{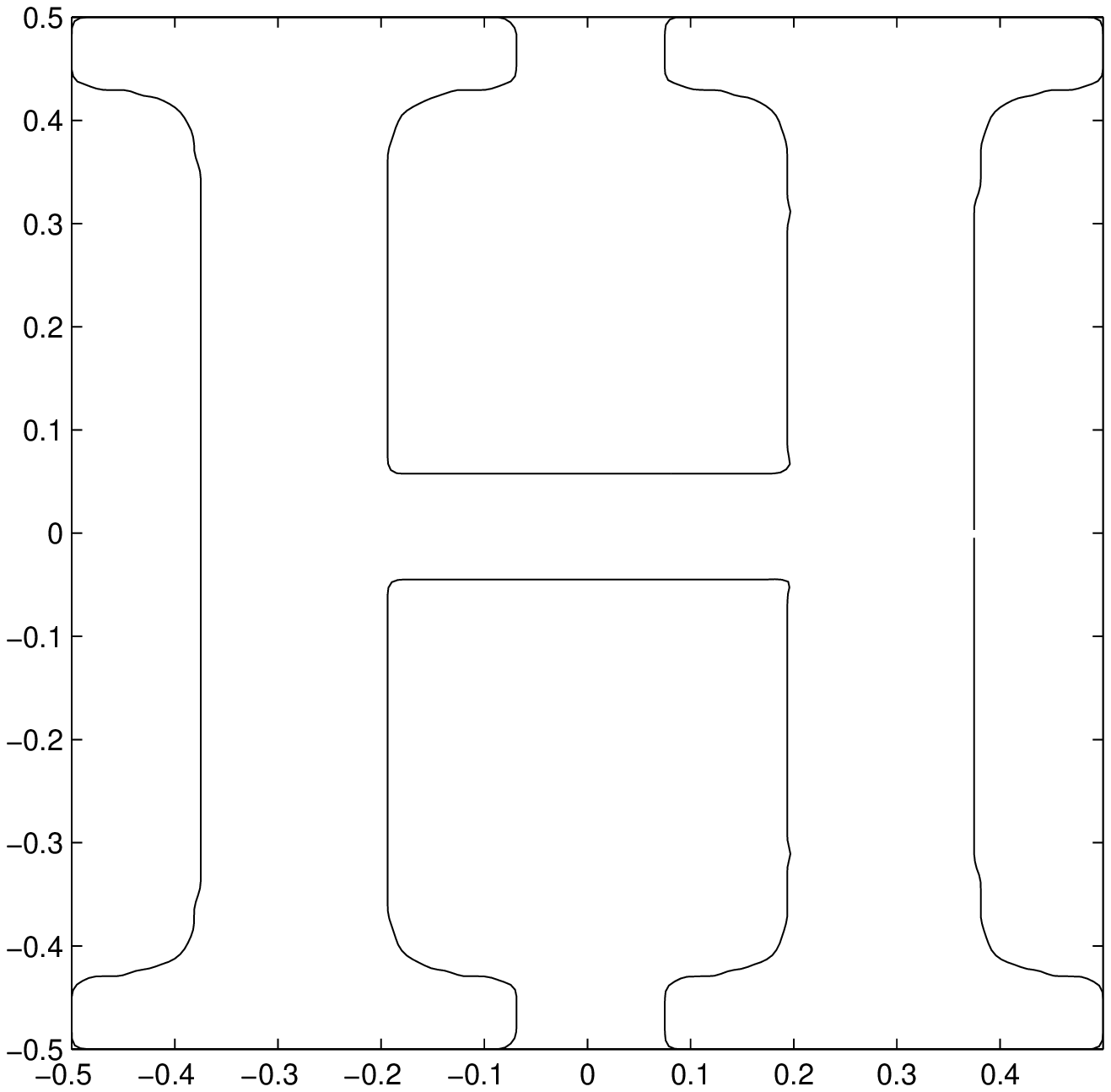}}
  \subfigure[]{\includegraphics[width=\lettersize]{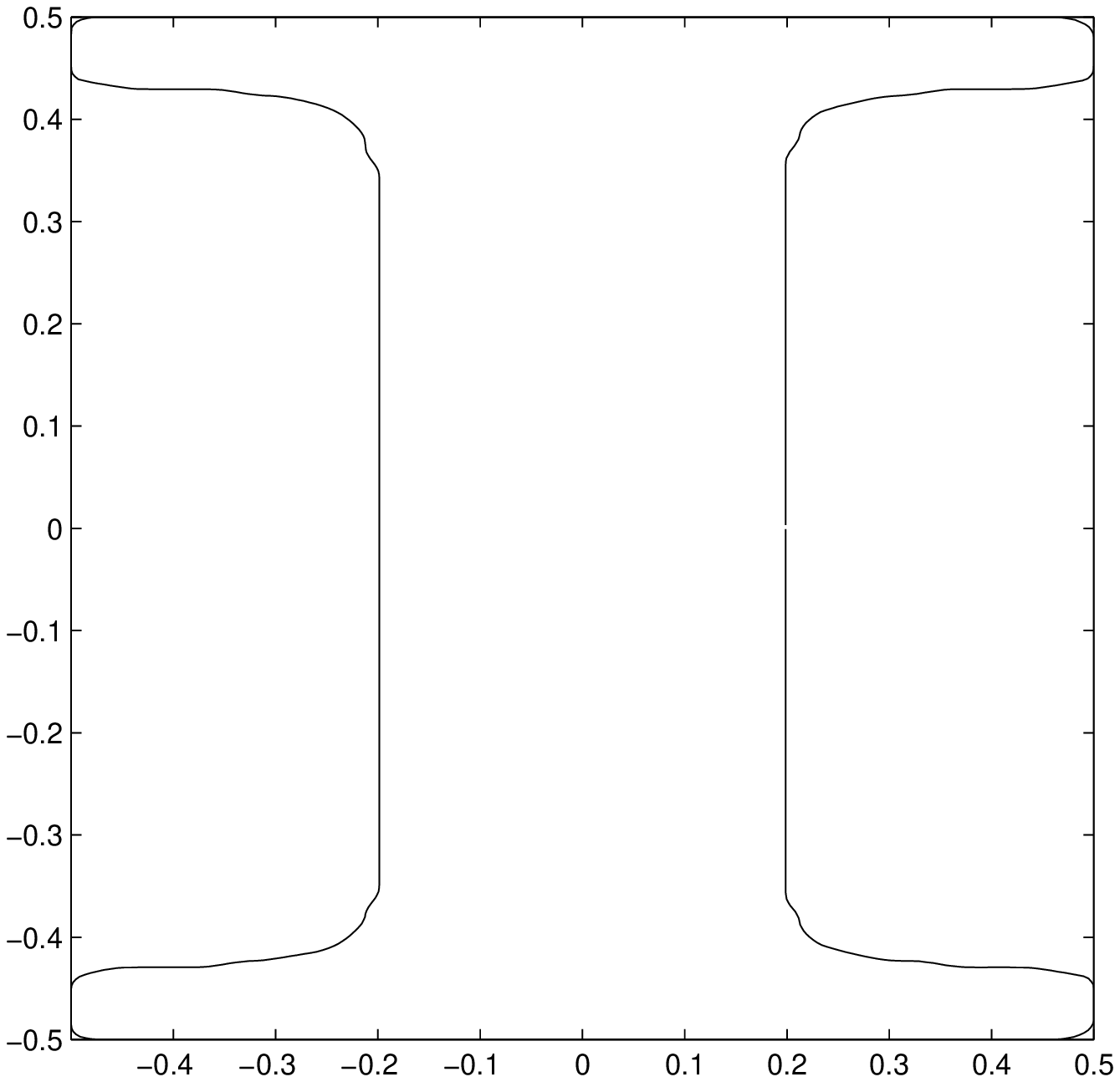}}
  \subfigure[]{\includegraphics[width=\lettersize]{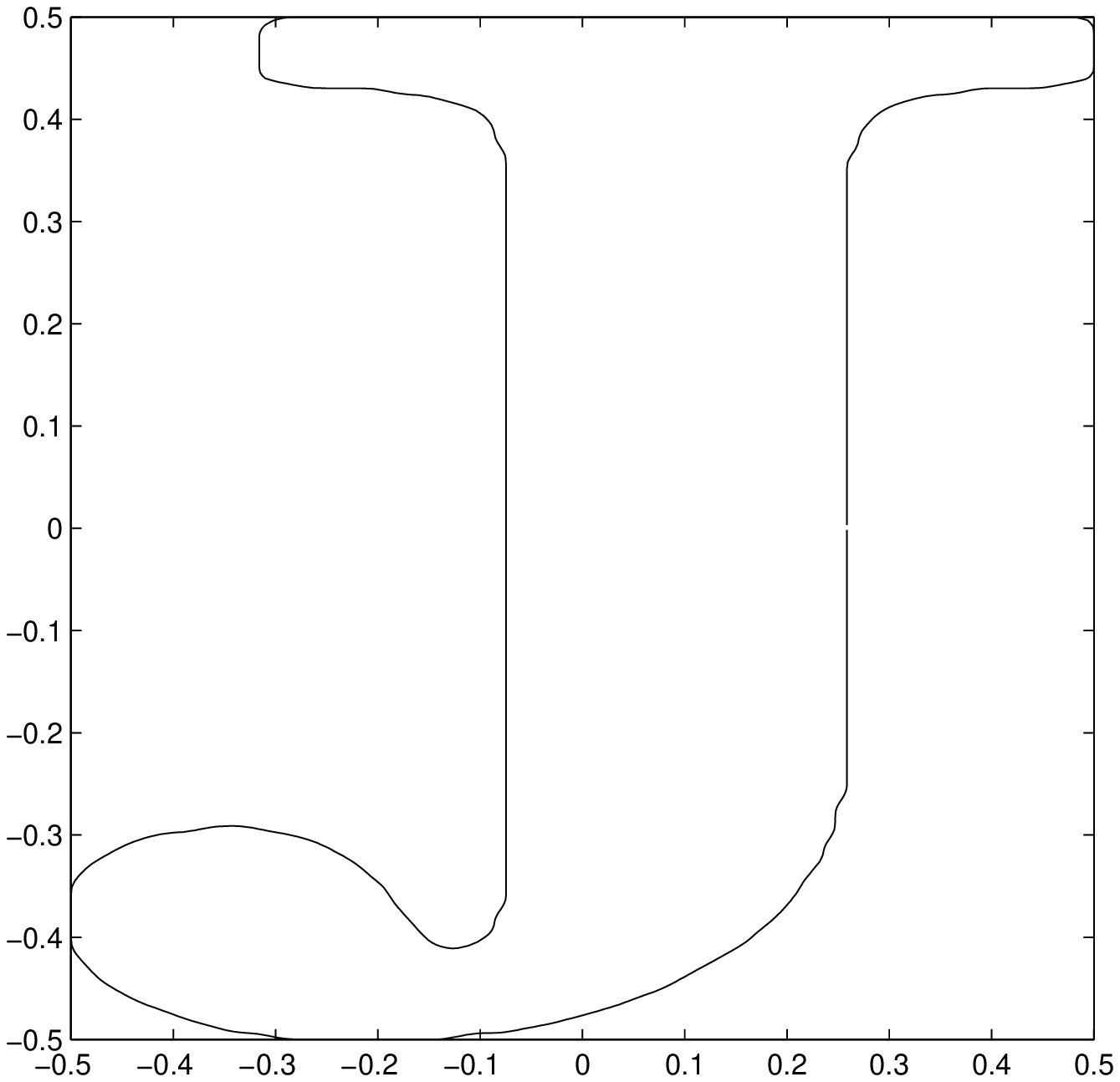}}
  \subfigure[]{\includegraphics[width=\lettersize]{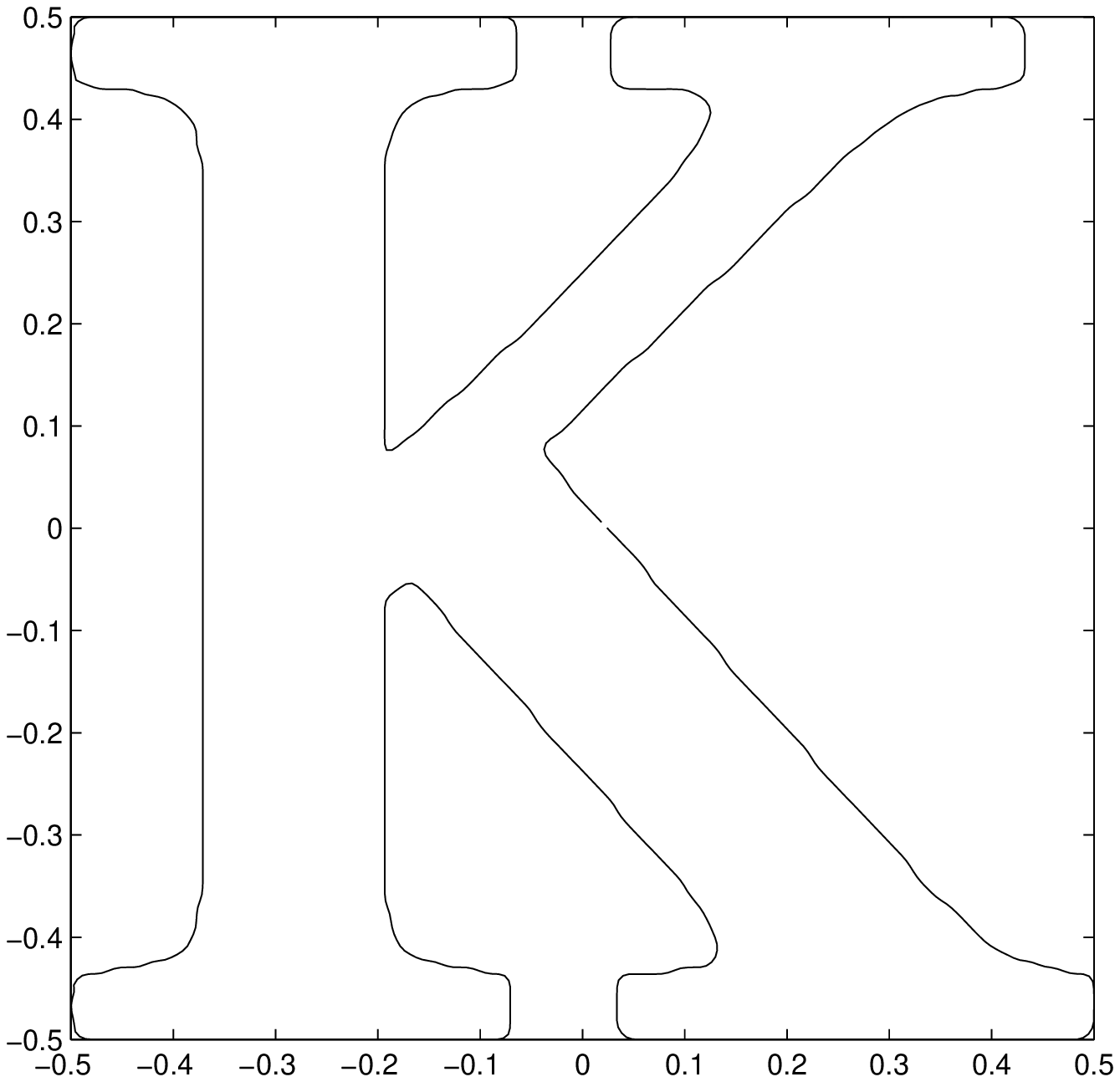}}
  \subfigure[]{\includegraphics[width=\lettersize]{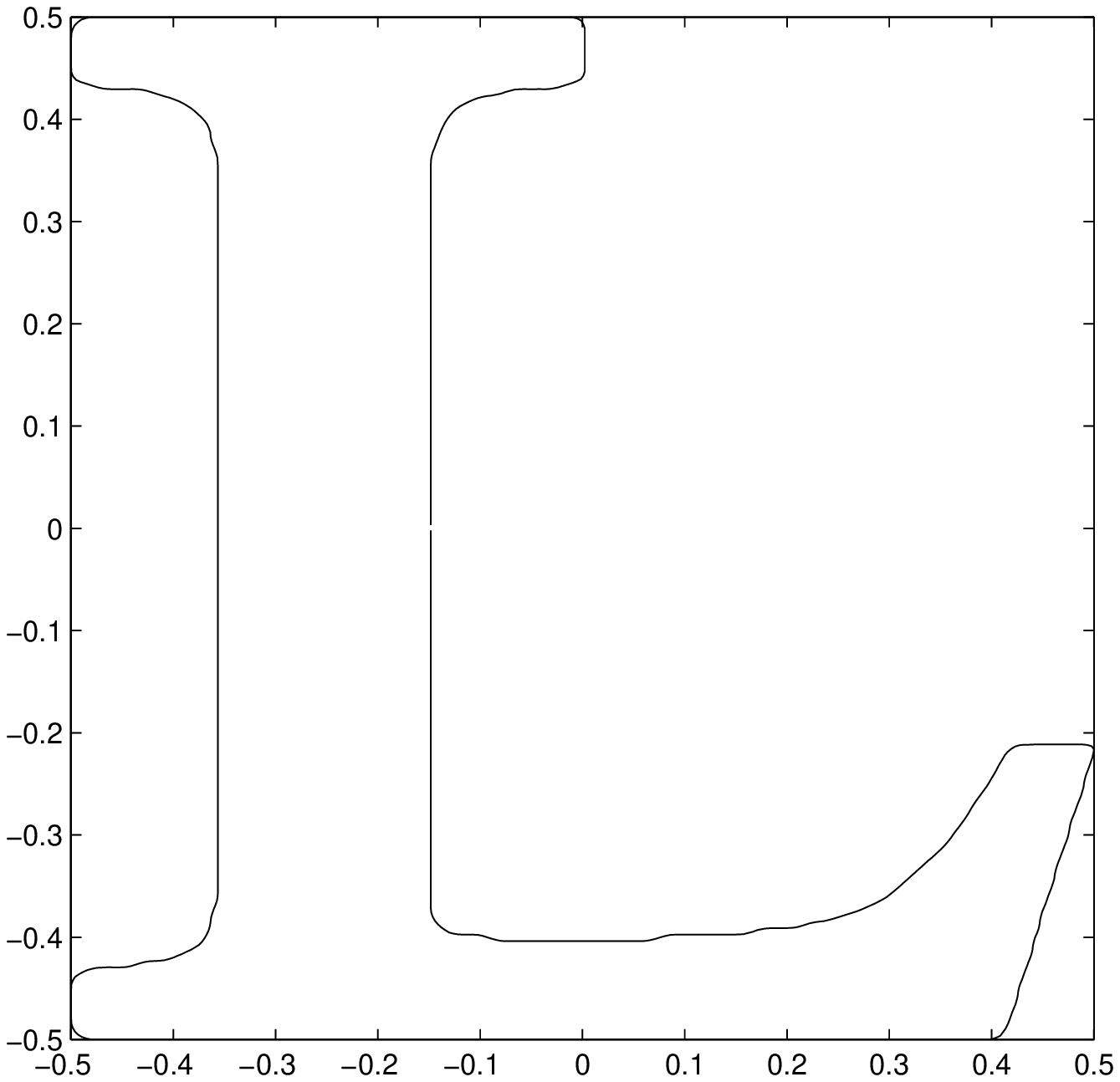}}
  \subfigure[]{\includegraphics[width=\lettersize]{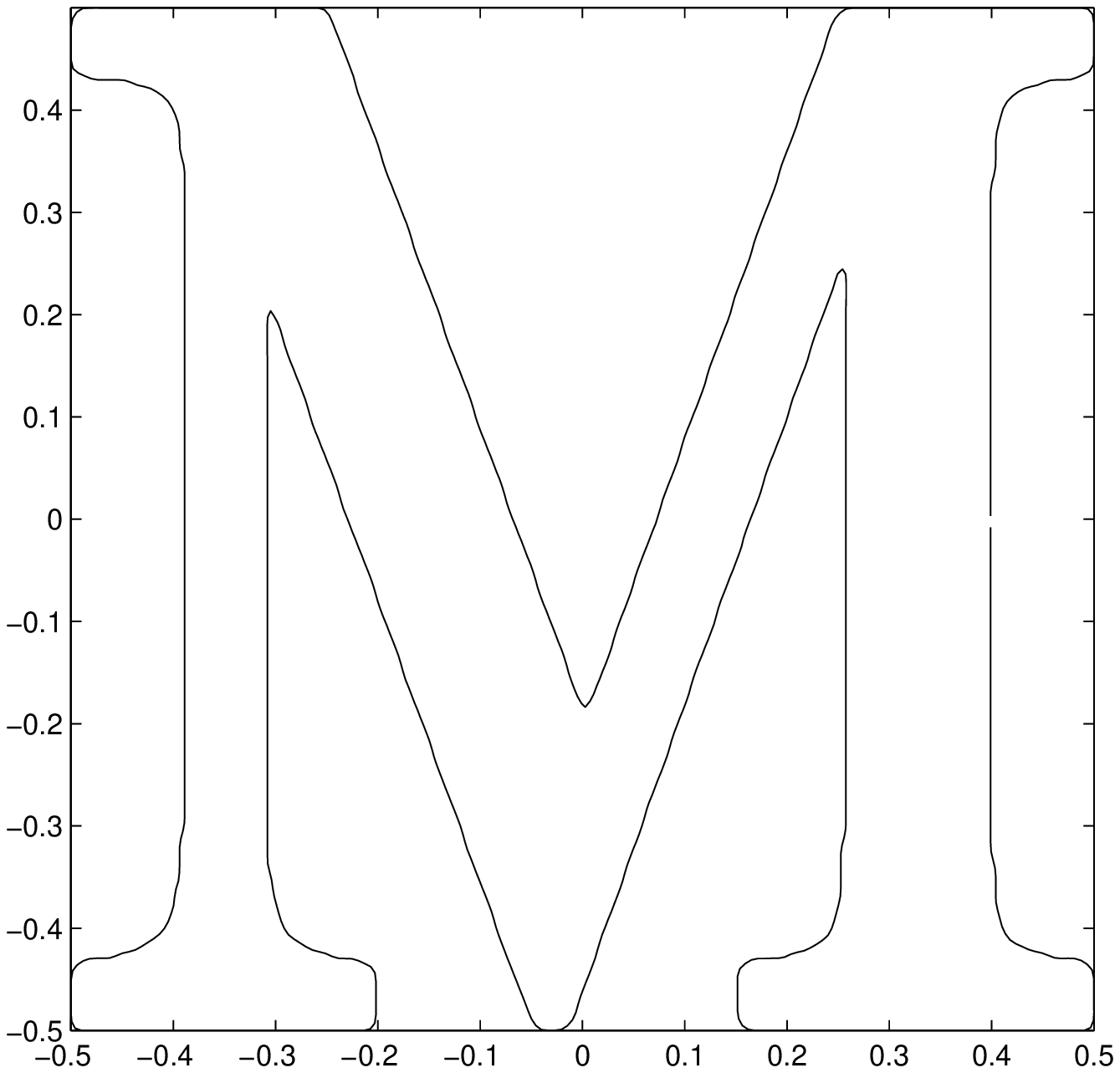}}
  \subfigure[]{\includegraphics[width=\lettersize]{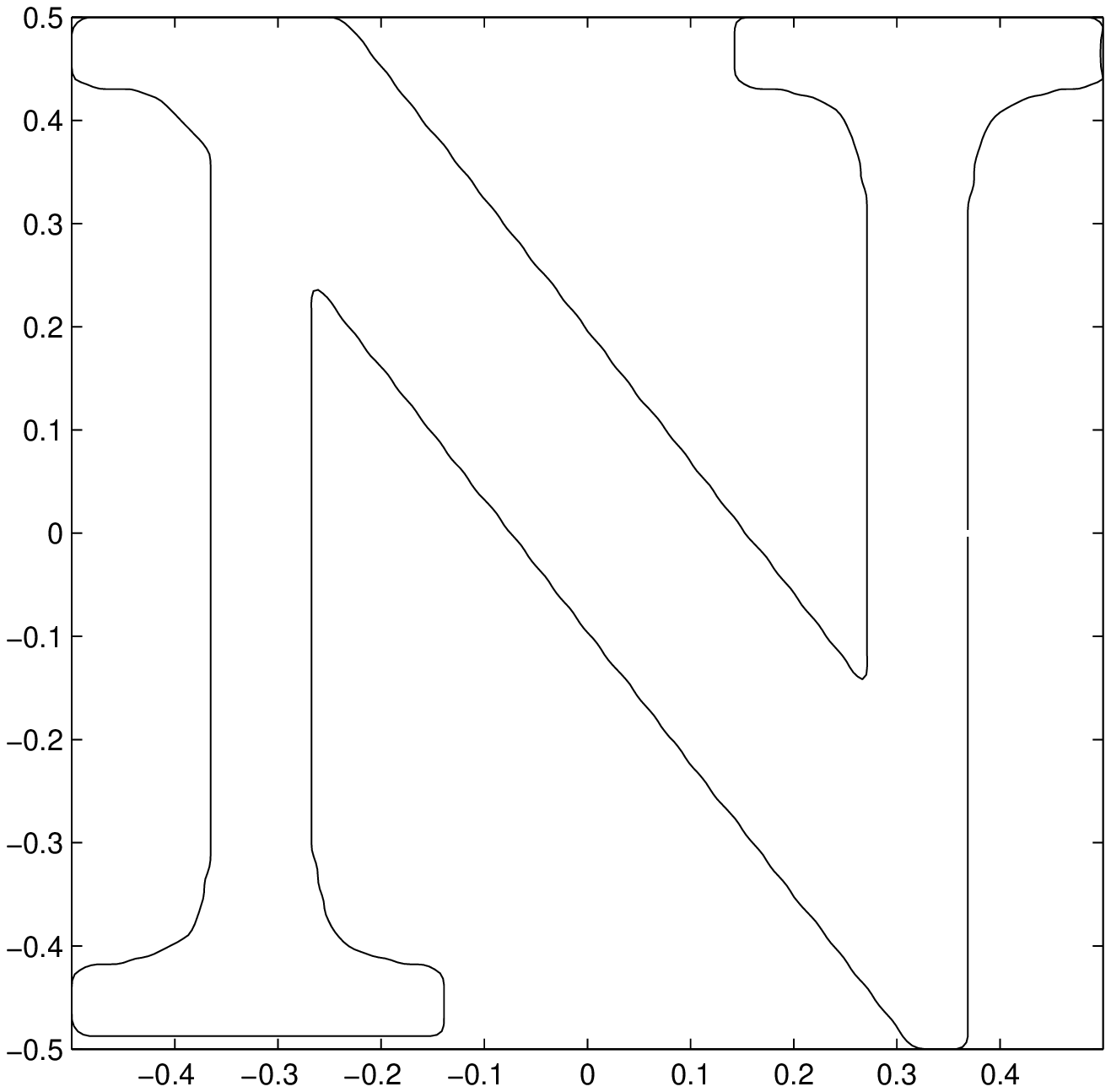}}
  \subfigure[]{\includegraphics[width=\lettersize]{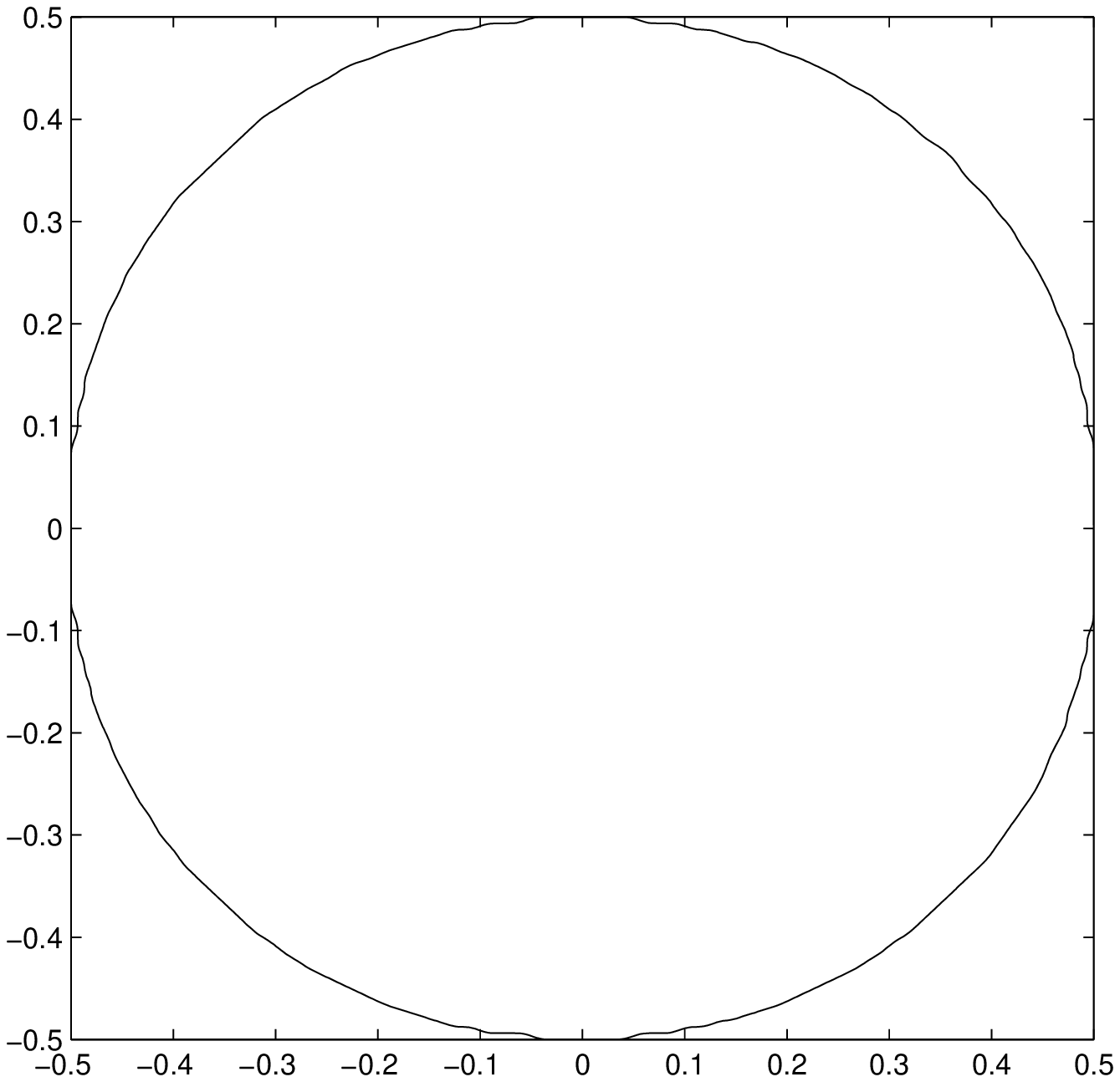}}
  \subfigure[]{\includegraphics[width=\lettersize]{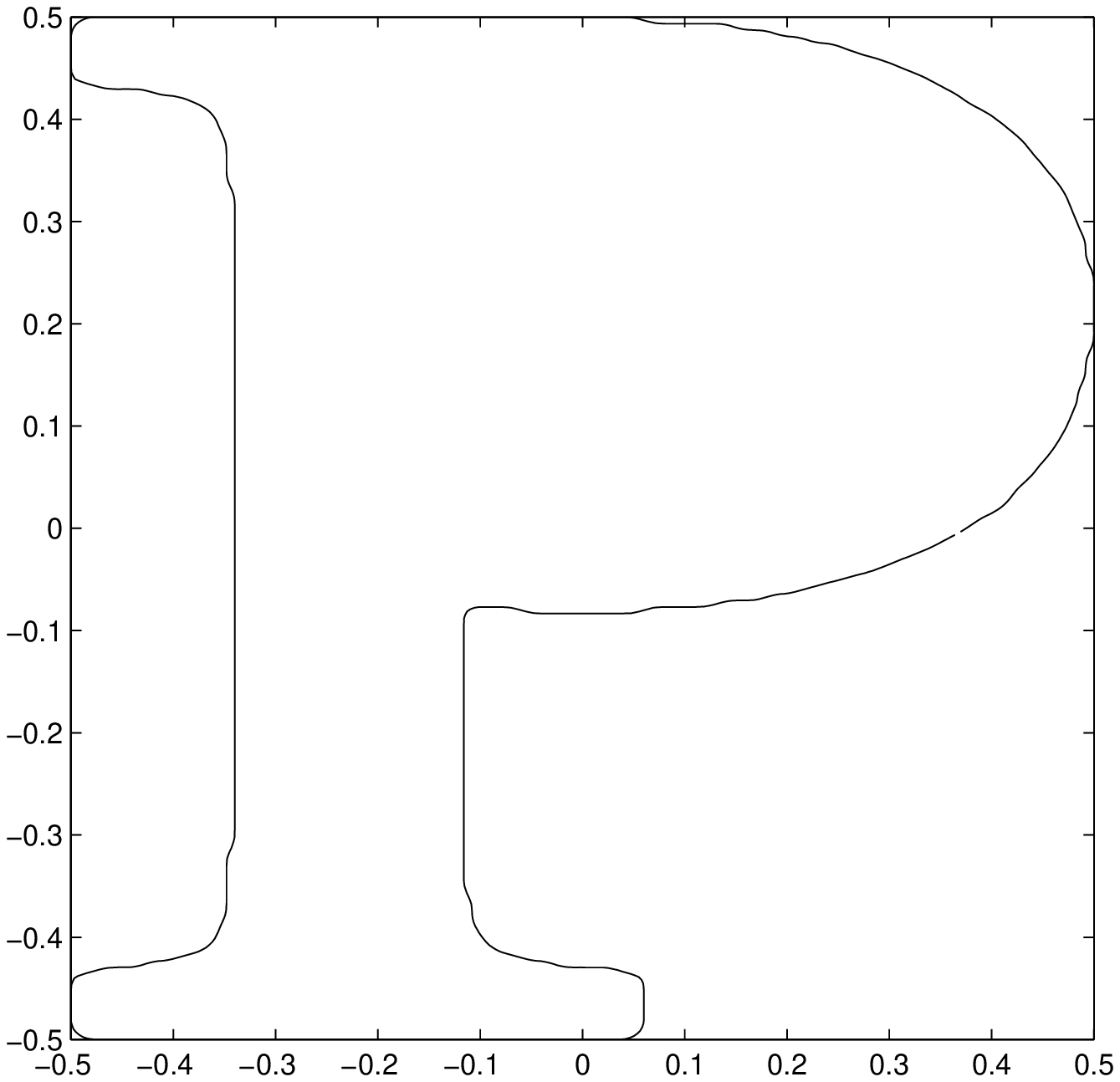}}
  \subfigure[]{\includegraphics[width=\lettersize]{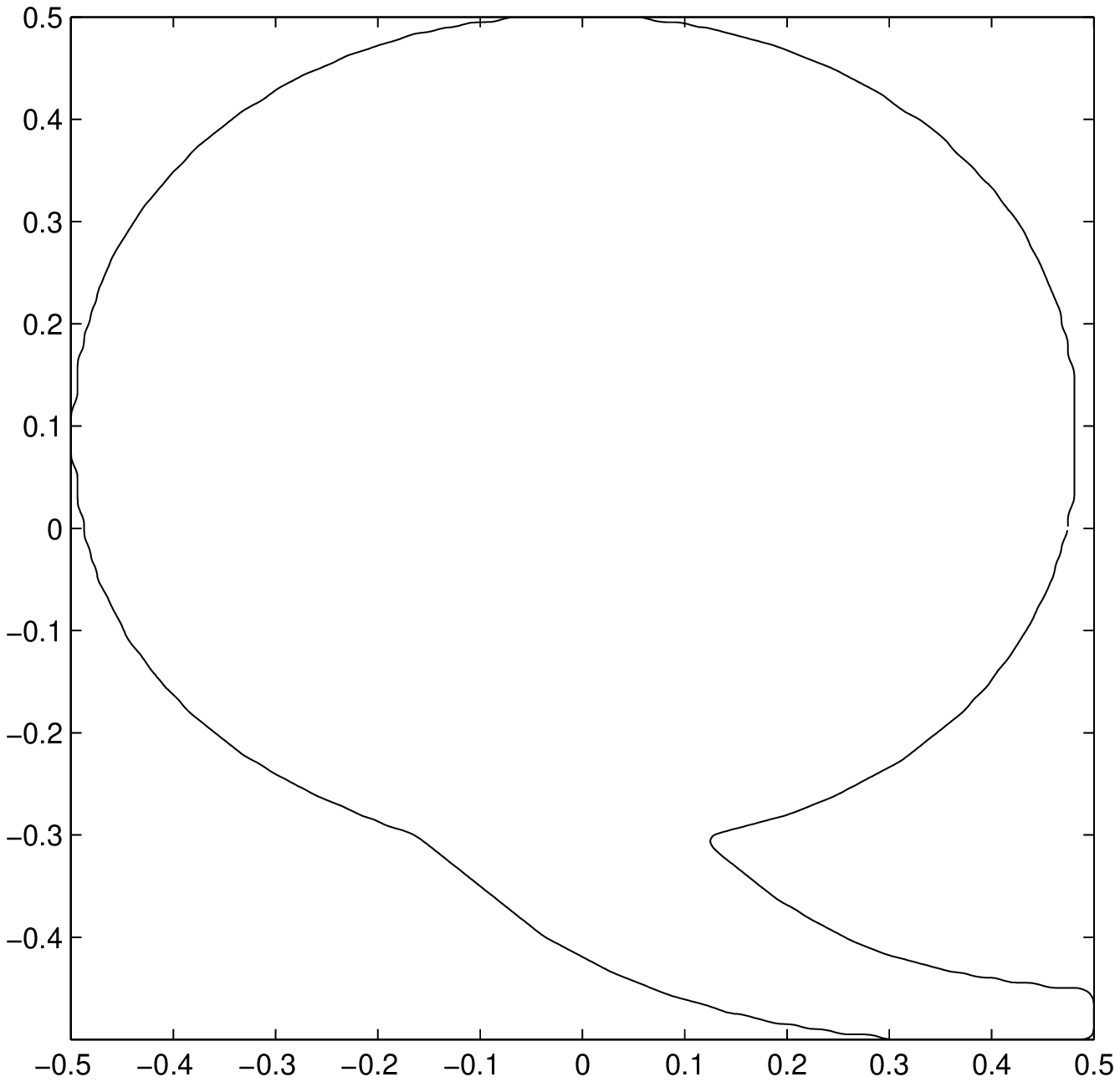}}
  \subfigure[]{\includegraphics[width=\lettersize]{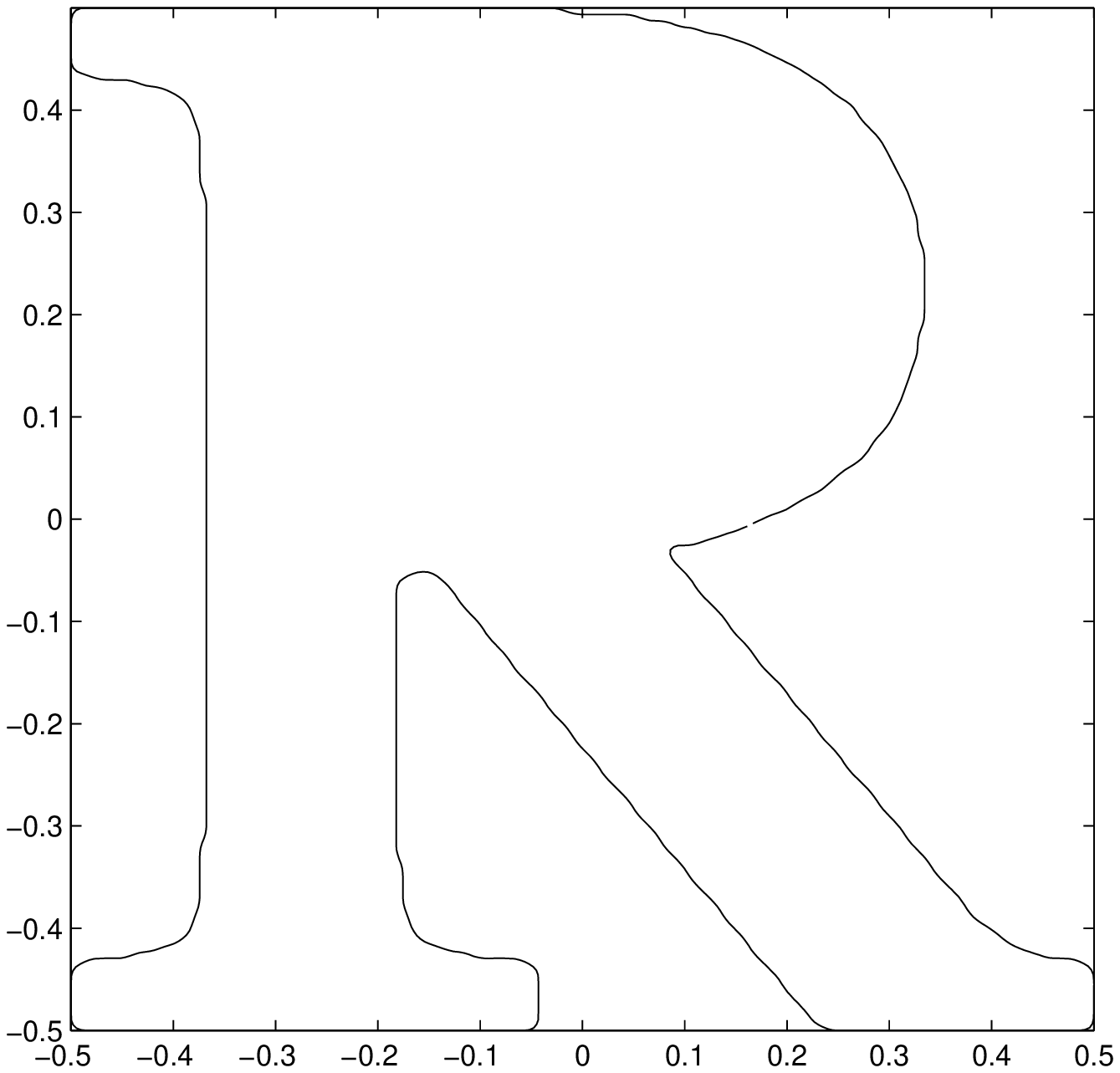}}
  \subfigure[]{\includegraphics[width=\lettersize]{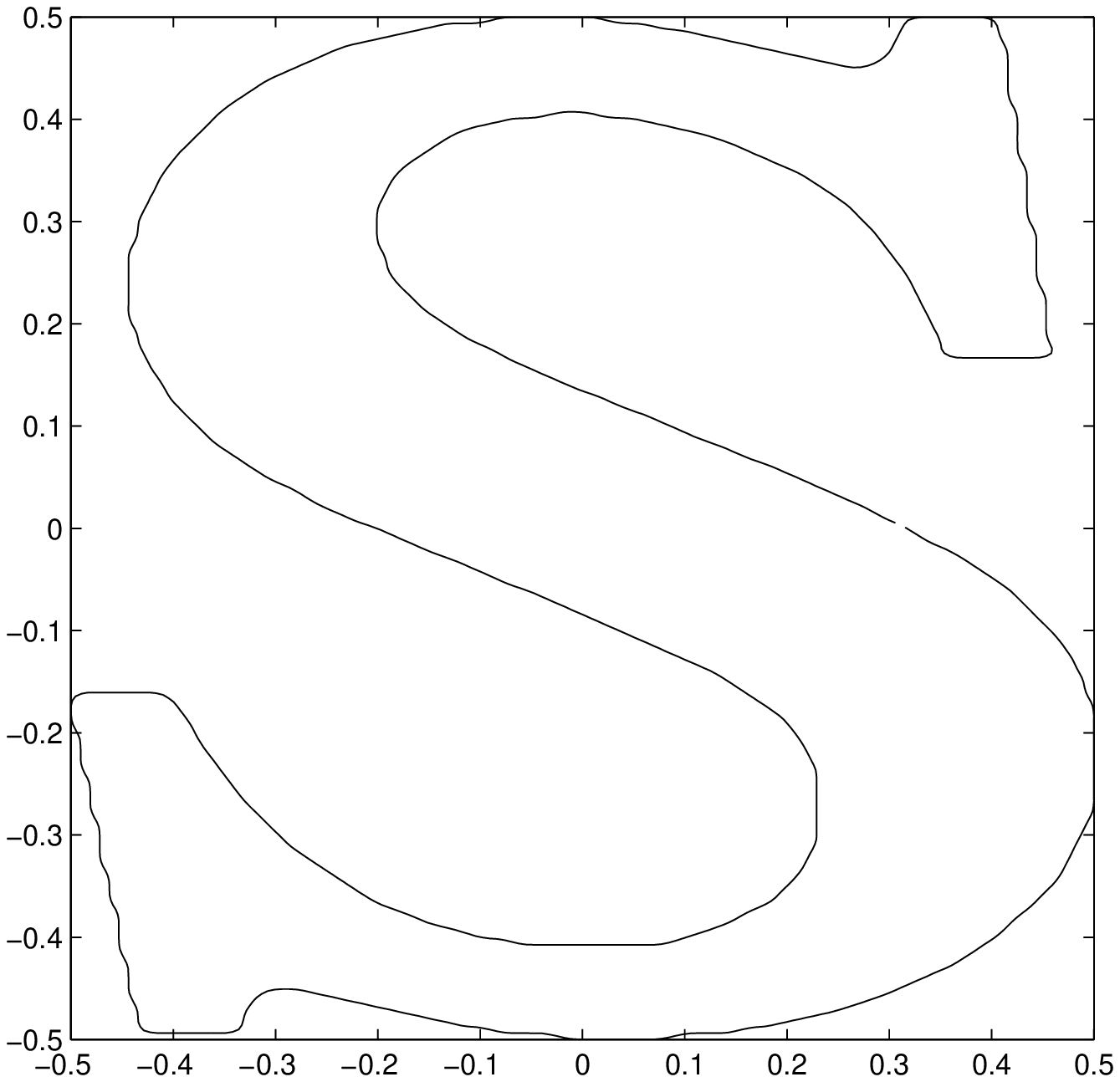}}
  \subfigure[]{\includegraphics[width=\lettersize]{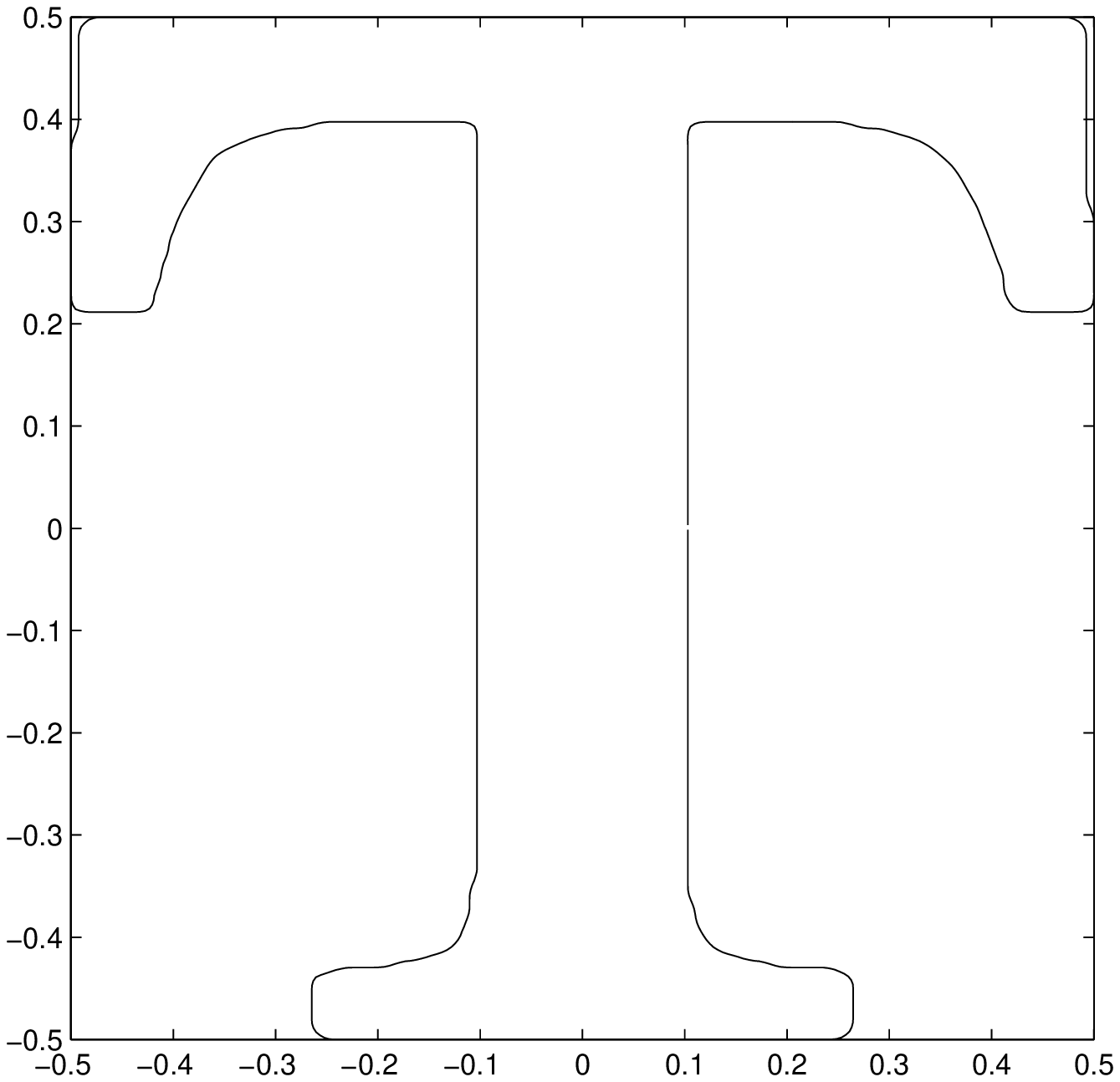}}
  \subfigure[]{\includegraphics[width=\lettersize]{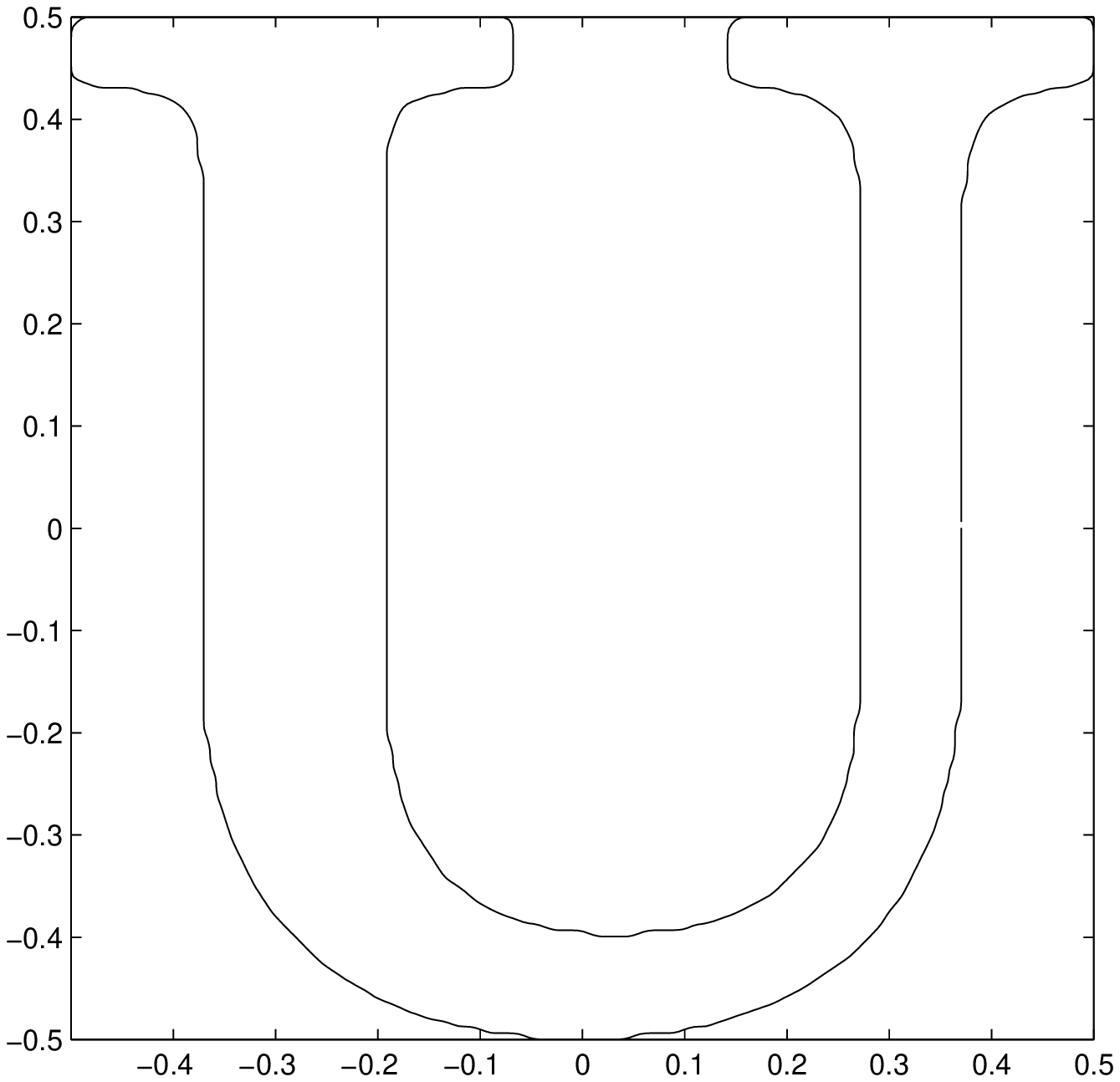}}
  \subfigure[]{\includegraphics[width=\lettersize]{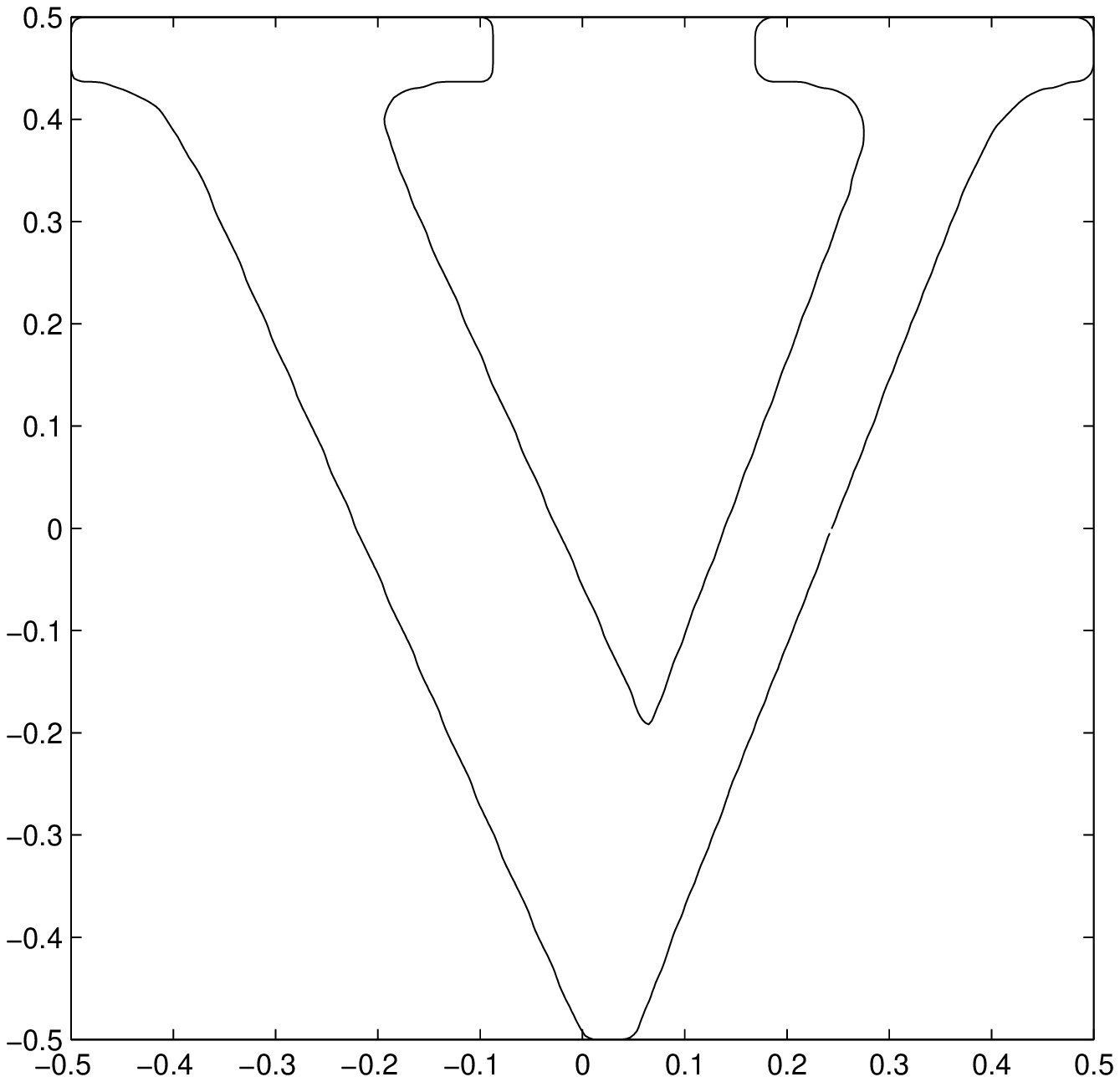}}
  \subfigure[]{\includegraphics[width=\lettersize]{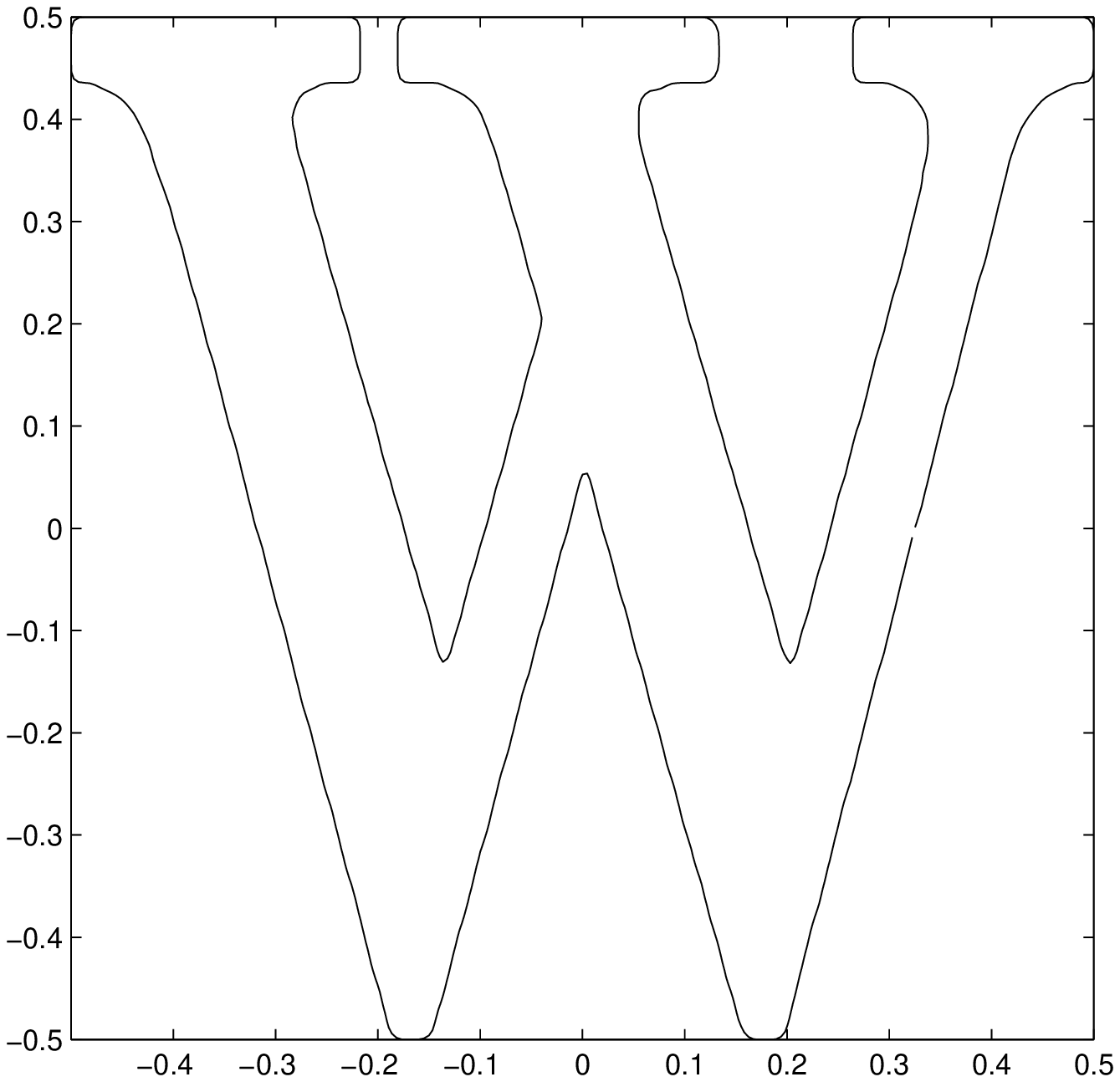}}
  \subfigure[]{\includegraphics[width=\lettersize]{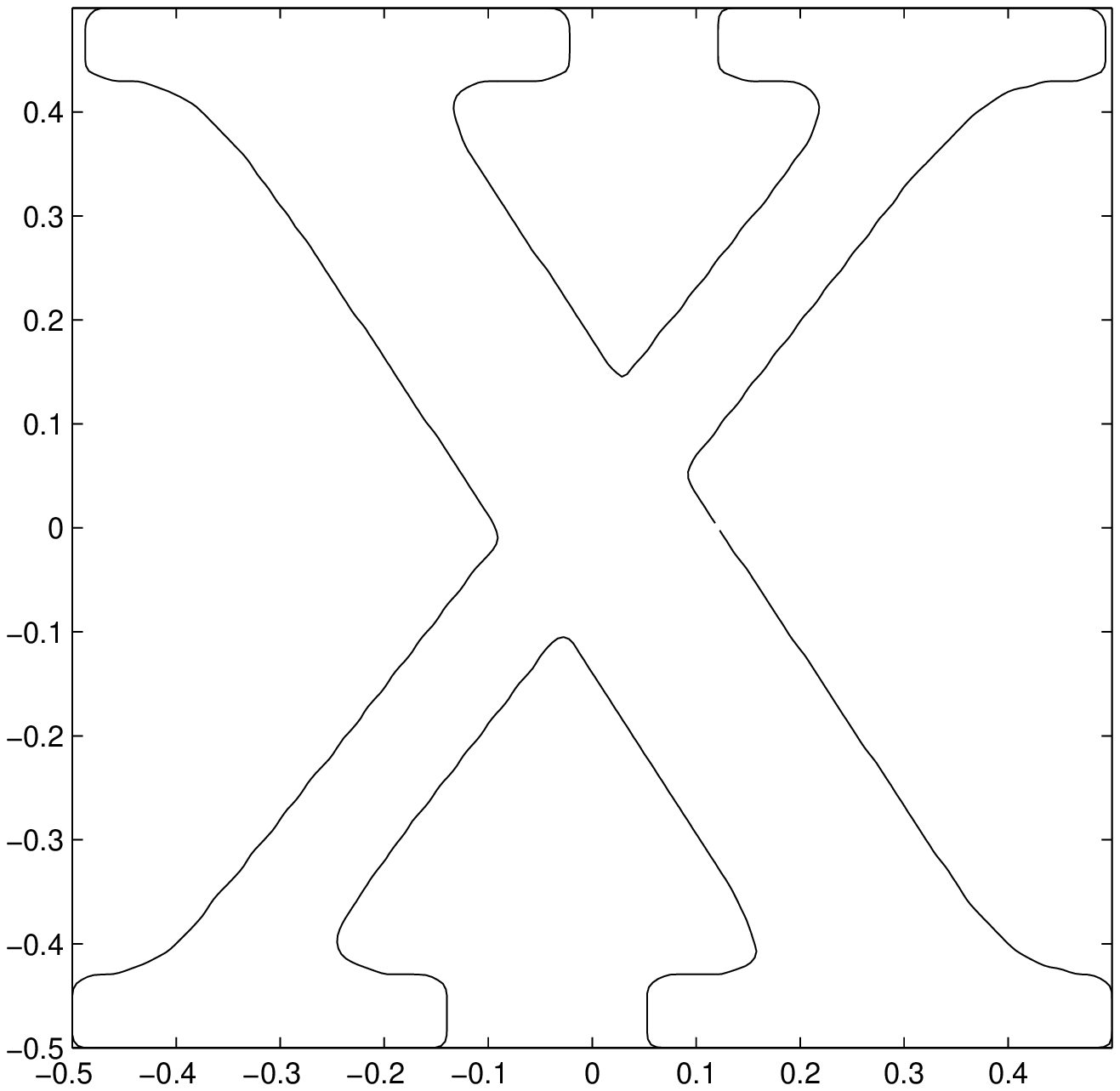}}
  \subfigure[]{\includegraphics[width=\lettersize]{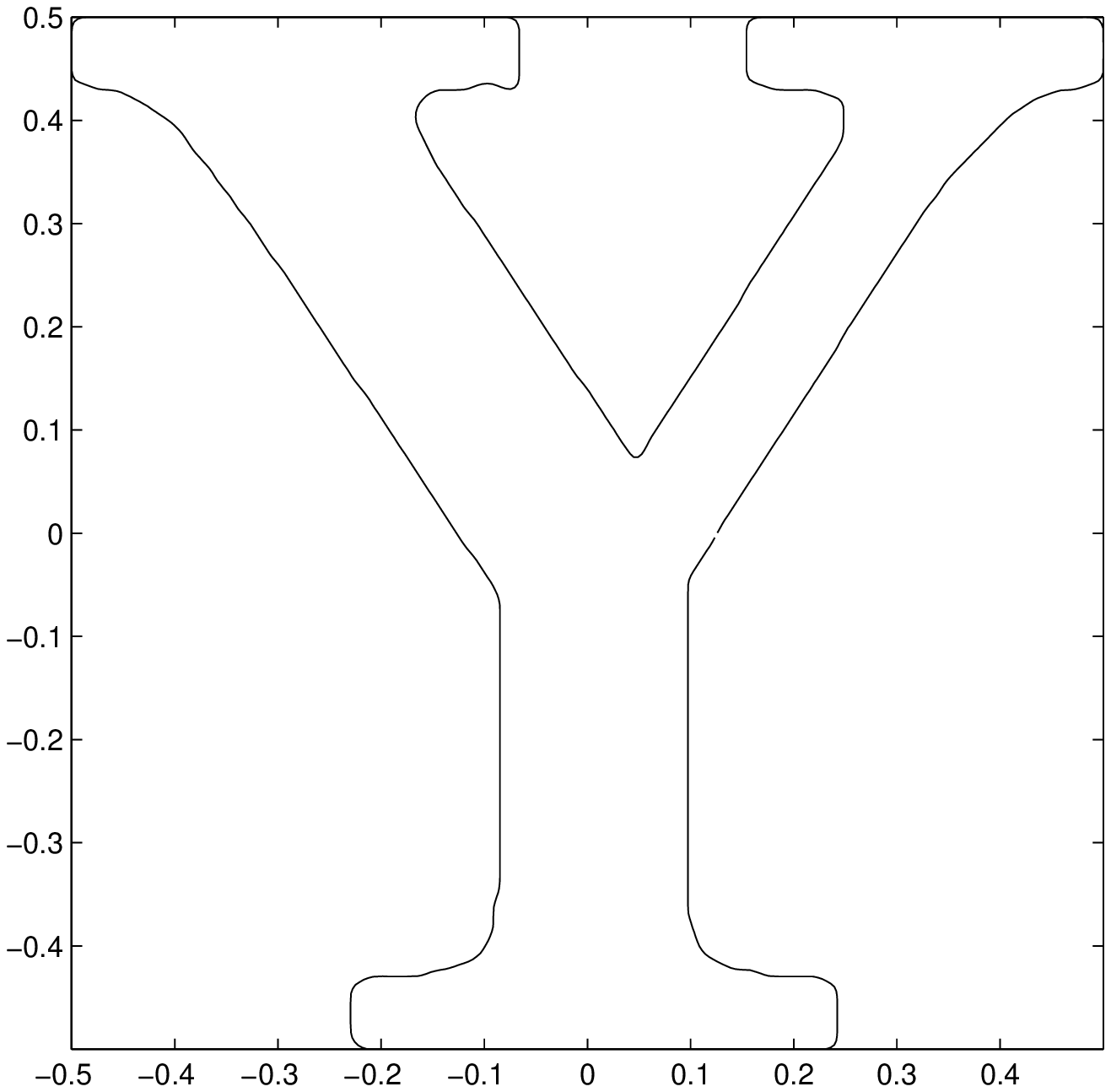}}
  \subfigure[]{\includegraphics[width=\lettersize]{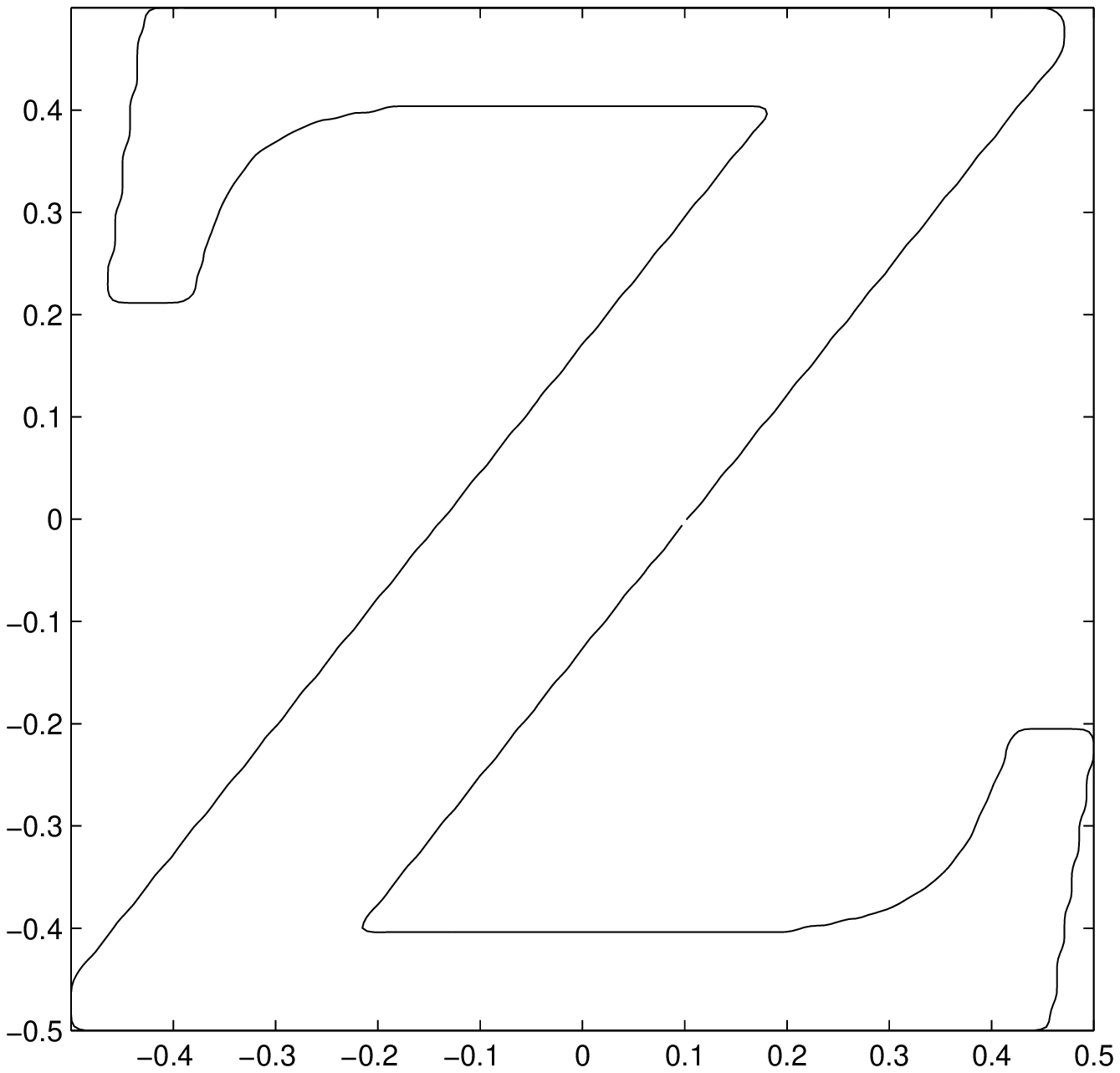}}
  \caption{Dictionary of standard letters.}
  \label{fig:all_letters_A_Z}
\end{figure}

\def\lettersize{2.7cm}
\begin{figure}[htp]
  \centering
  \subfigure[]{\includegraphics[width=\lettersize]{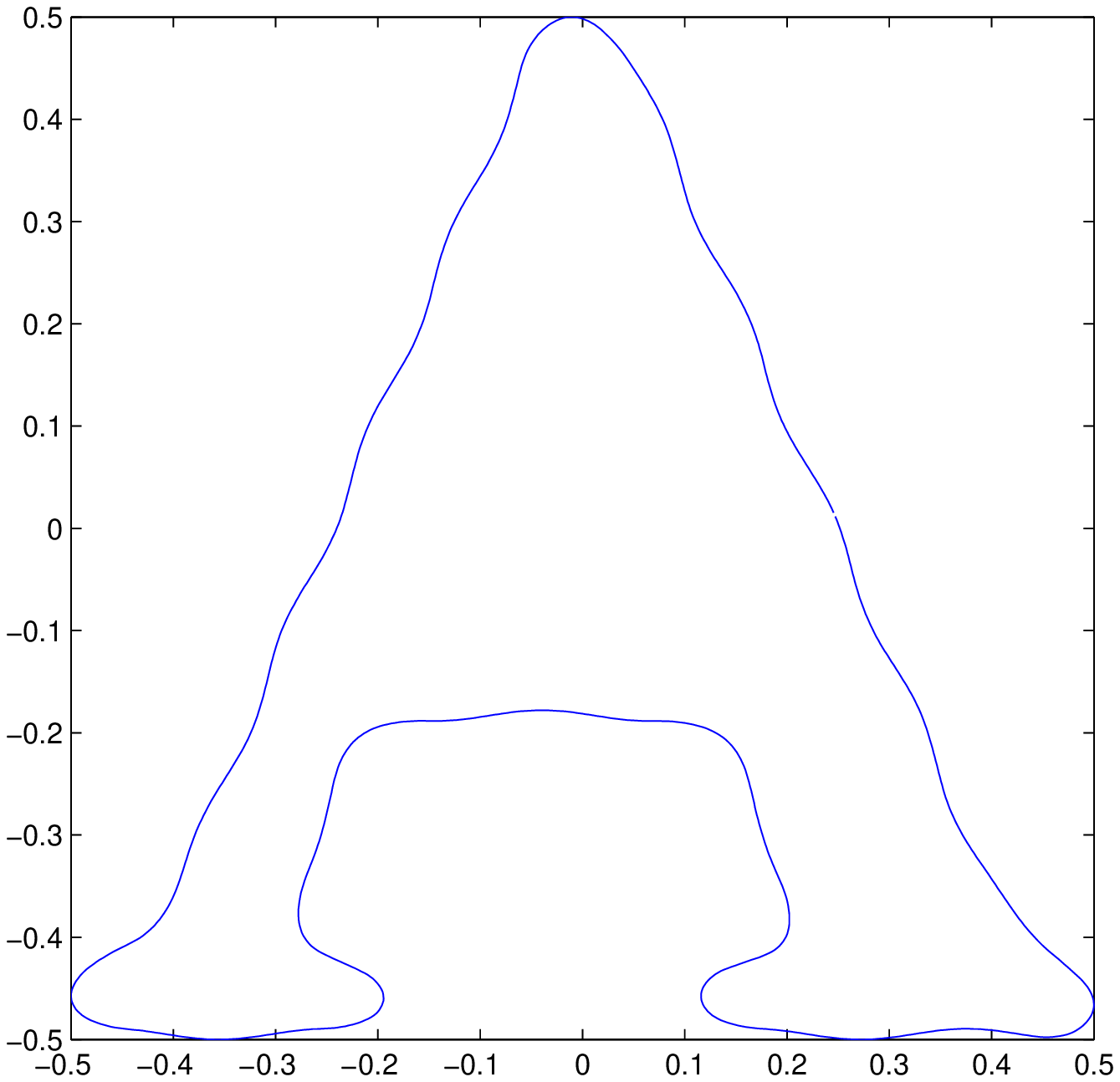}}
  \subfigure[]{\includegraphics[width=\lettersize]{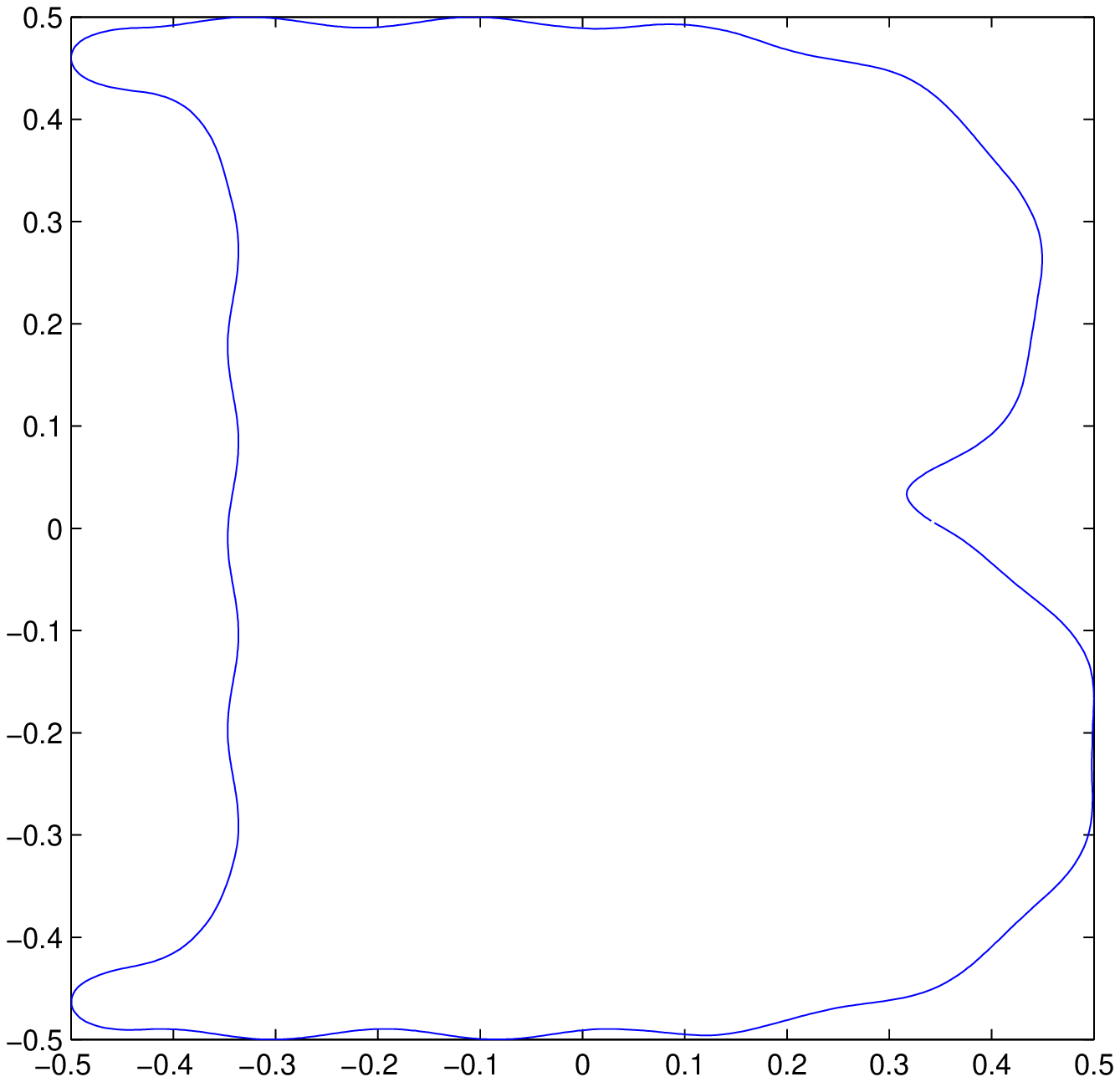}}
  \subfigure[]{\includegraphics[width=\lettersize]{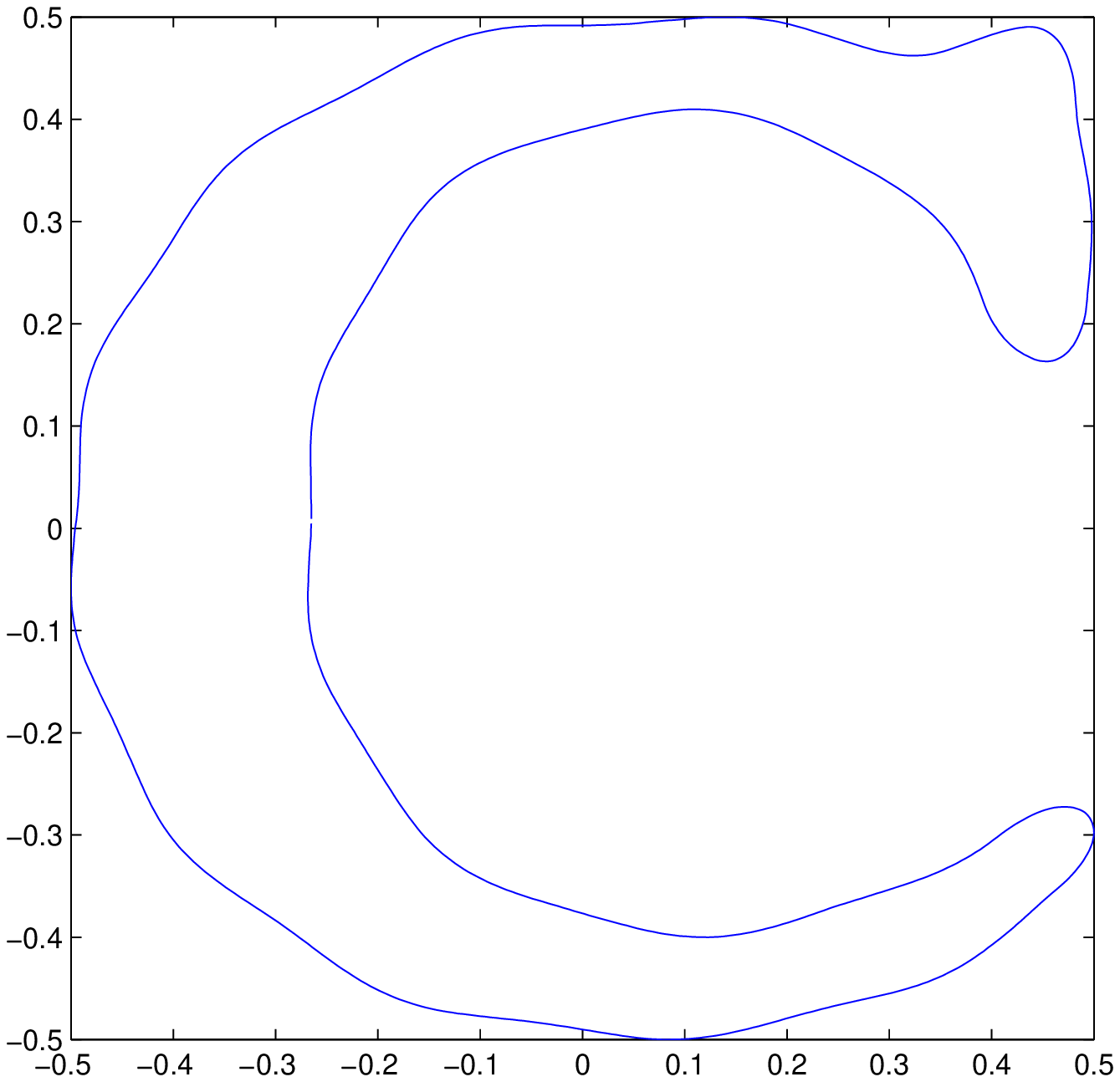}}
  \subfigure[]{\includegraphics[width=\lettersize]{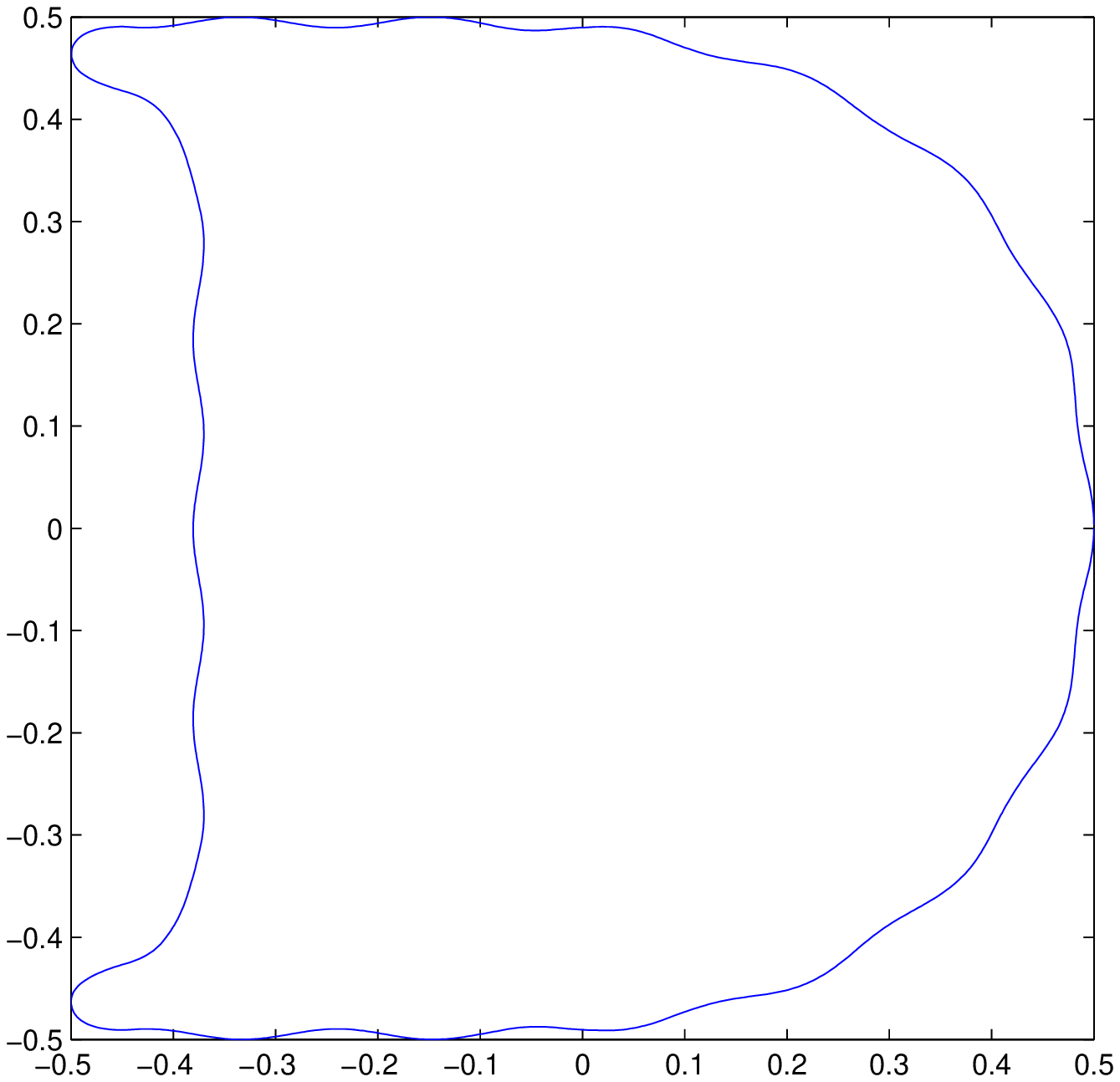}}
  \subfigure[]{\includegraphics[width=\lettersize]{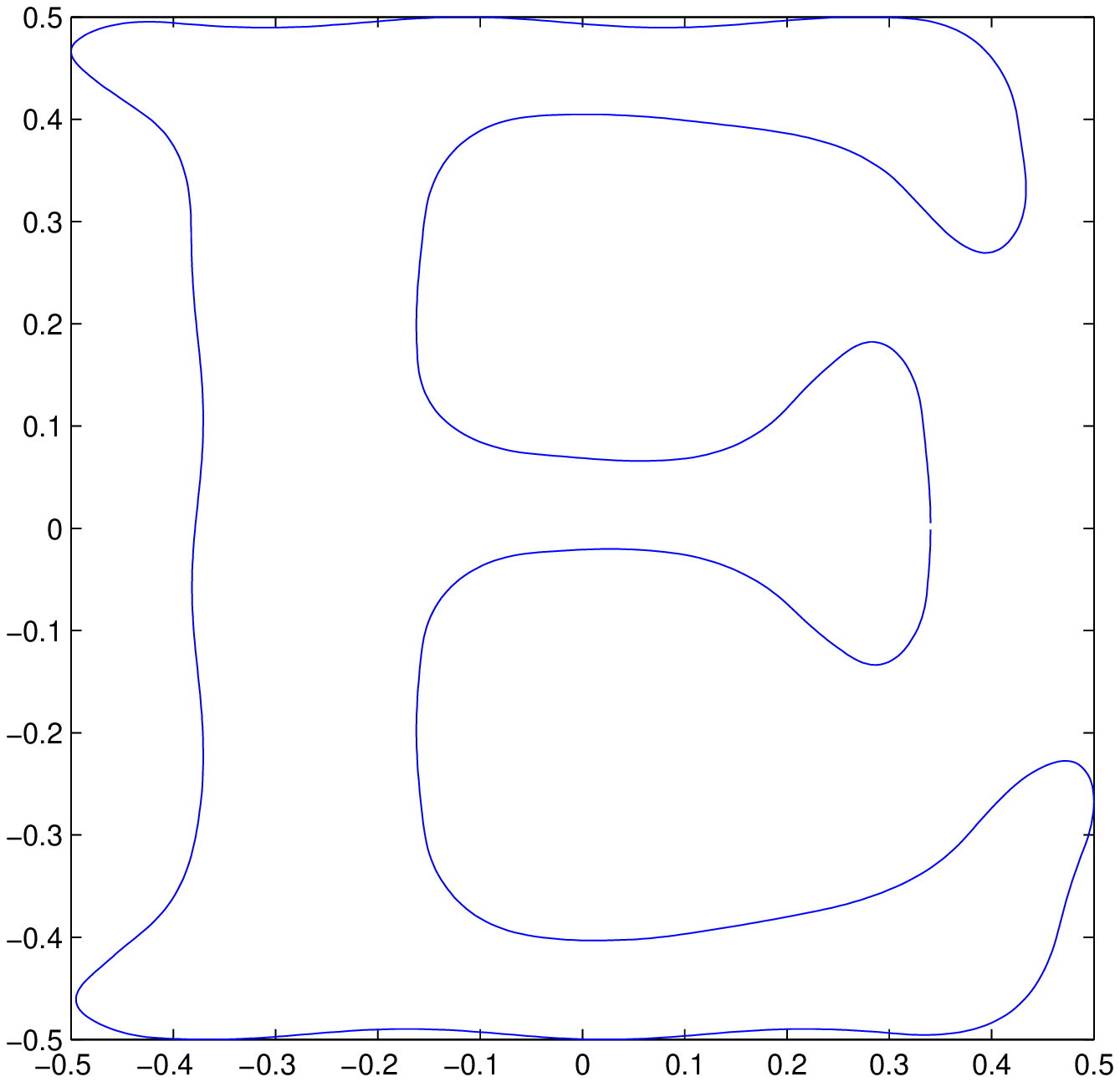}}
  \subfigure[]{\includegraphics[width=\lettersize]{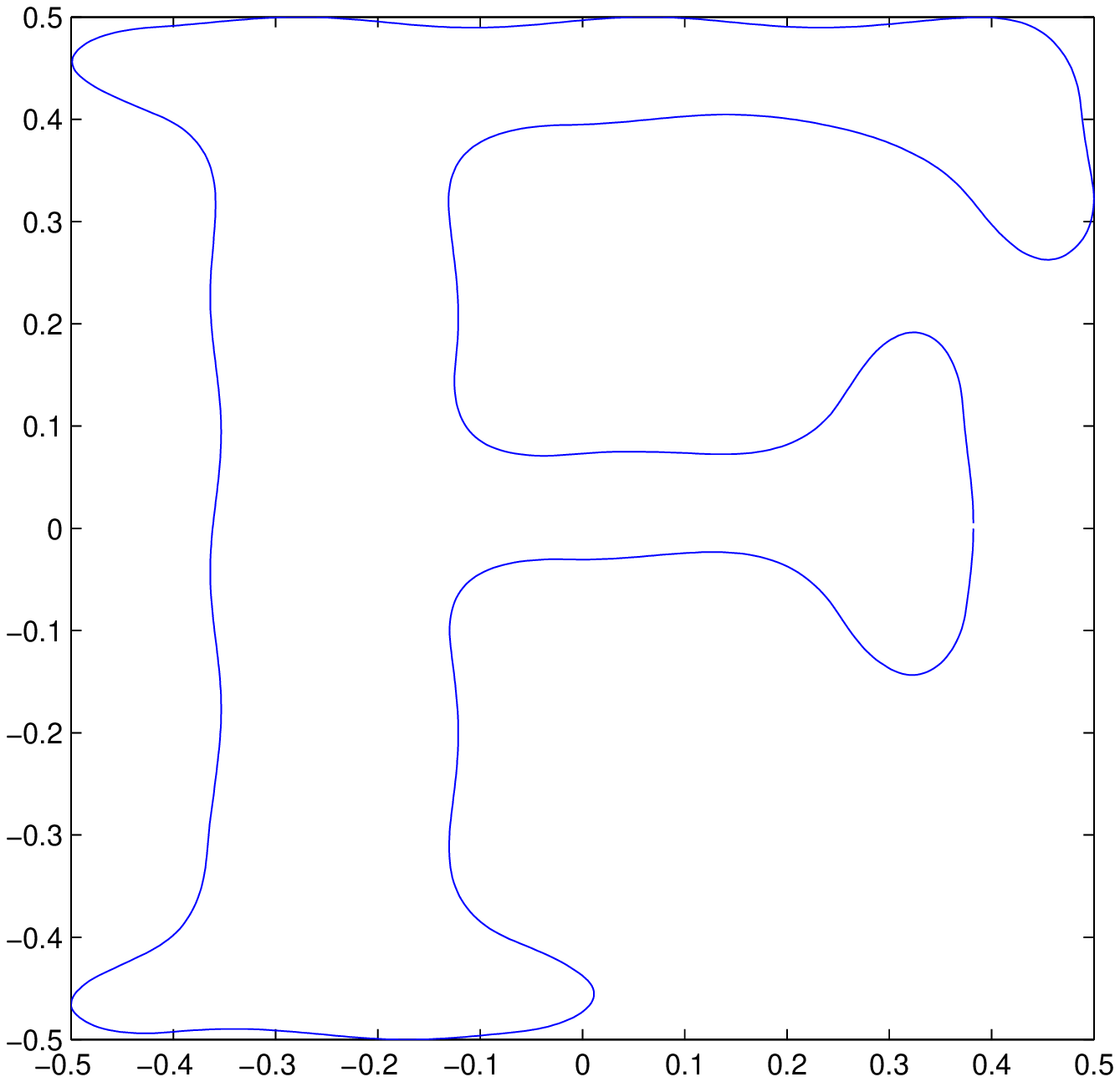}}
  \subfigure[]{\includegraphics[width=\lettersize]{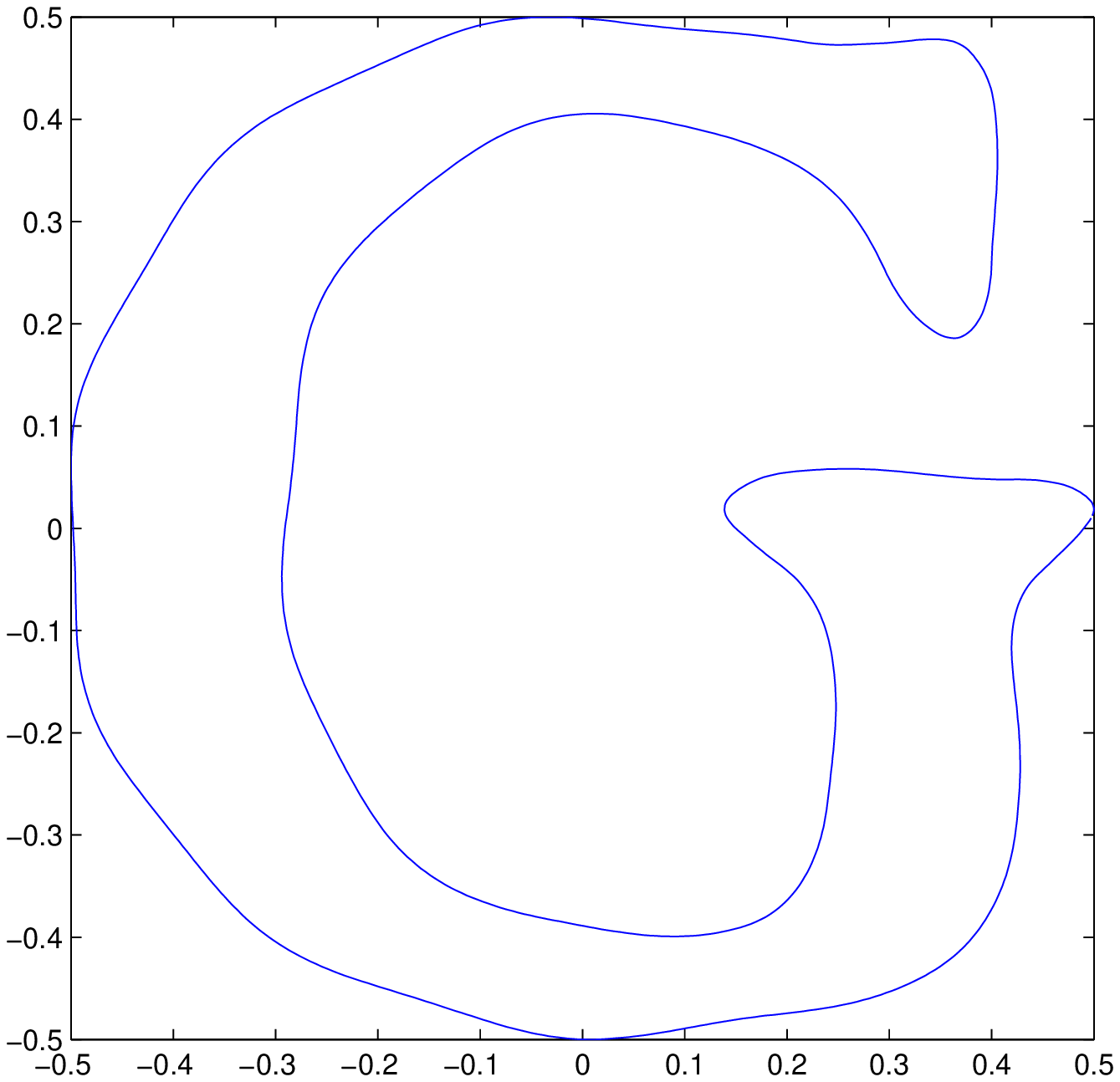}}
  \subfigure[]{\includegraphics[width=\lettersize]{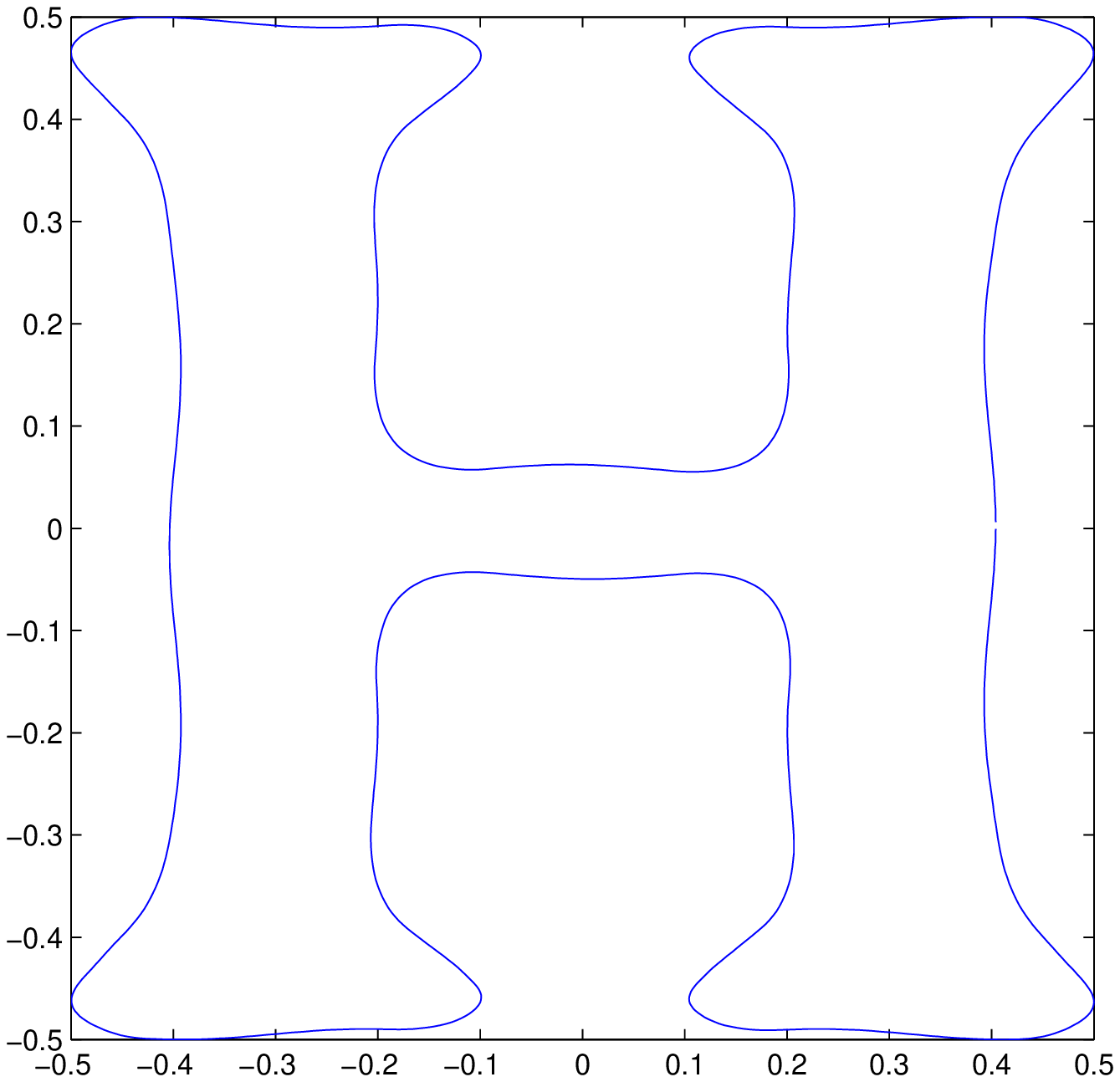}}
  \subfigure[]{\includegraphics[width=\lettersize]{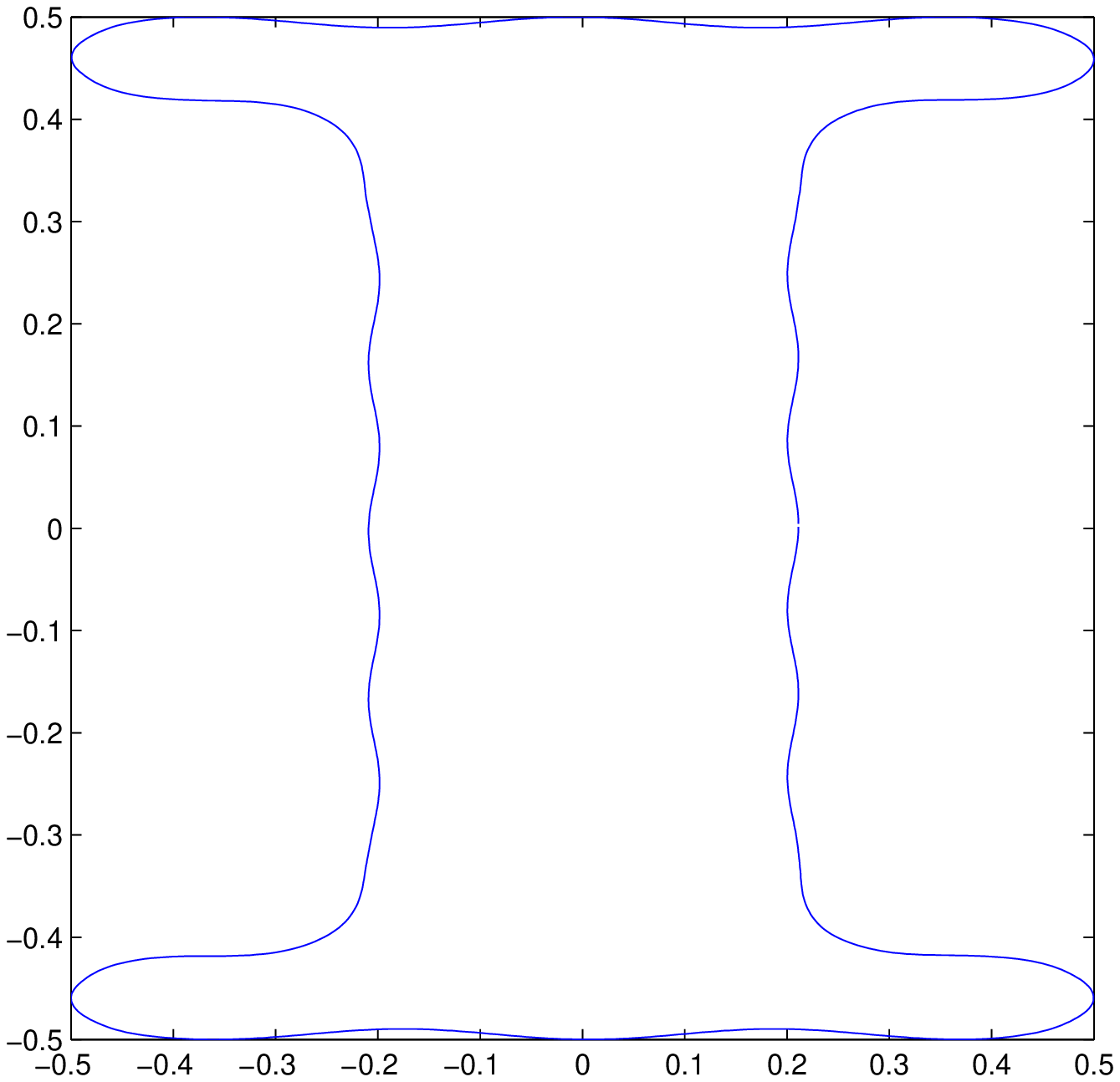}}
  \subfigure[]{\includegraphics[width=\lettersize]{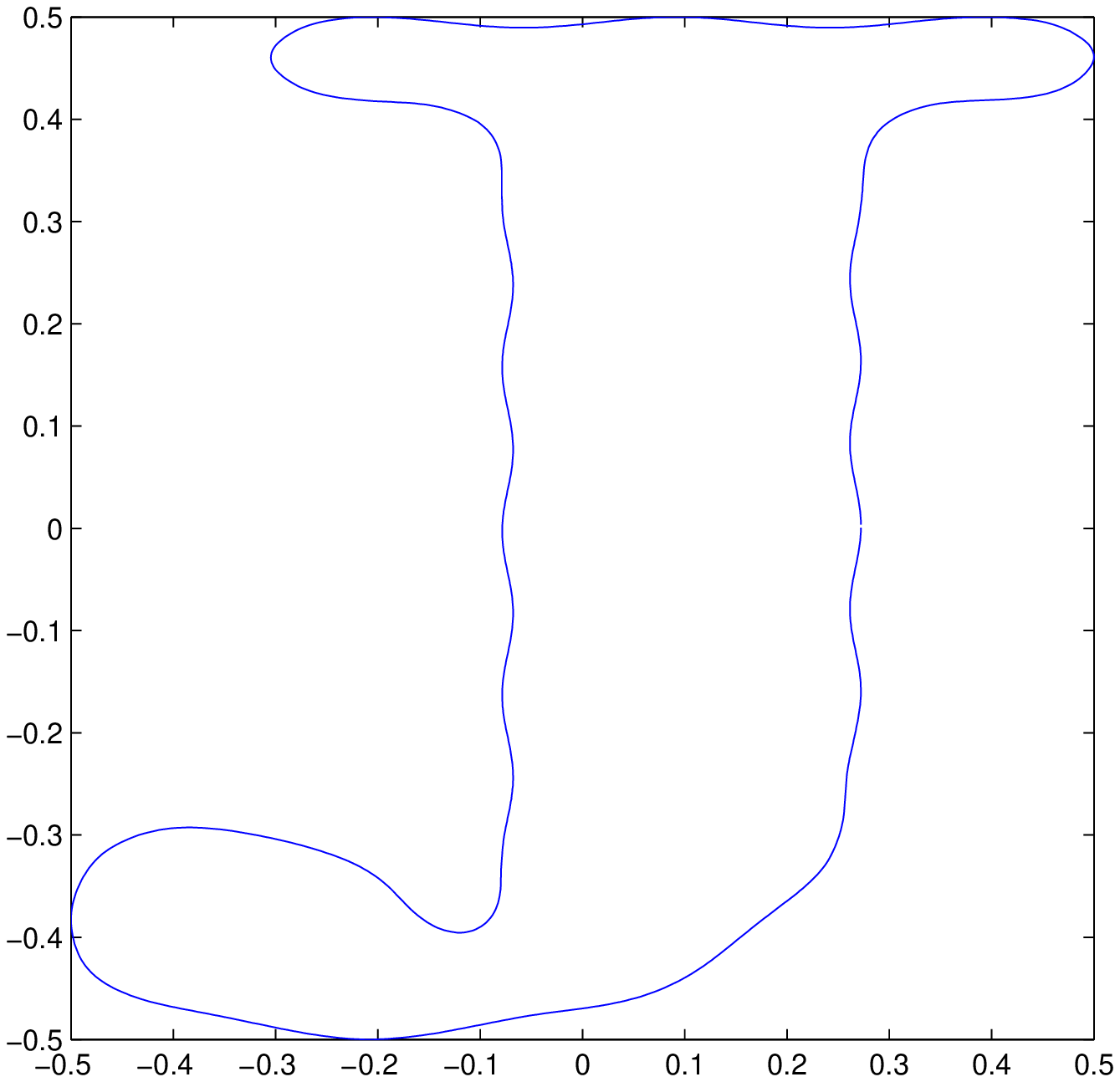}}
  \subfigure[]{\includegraphics[width=\lettersize]{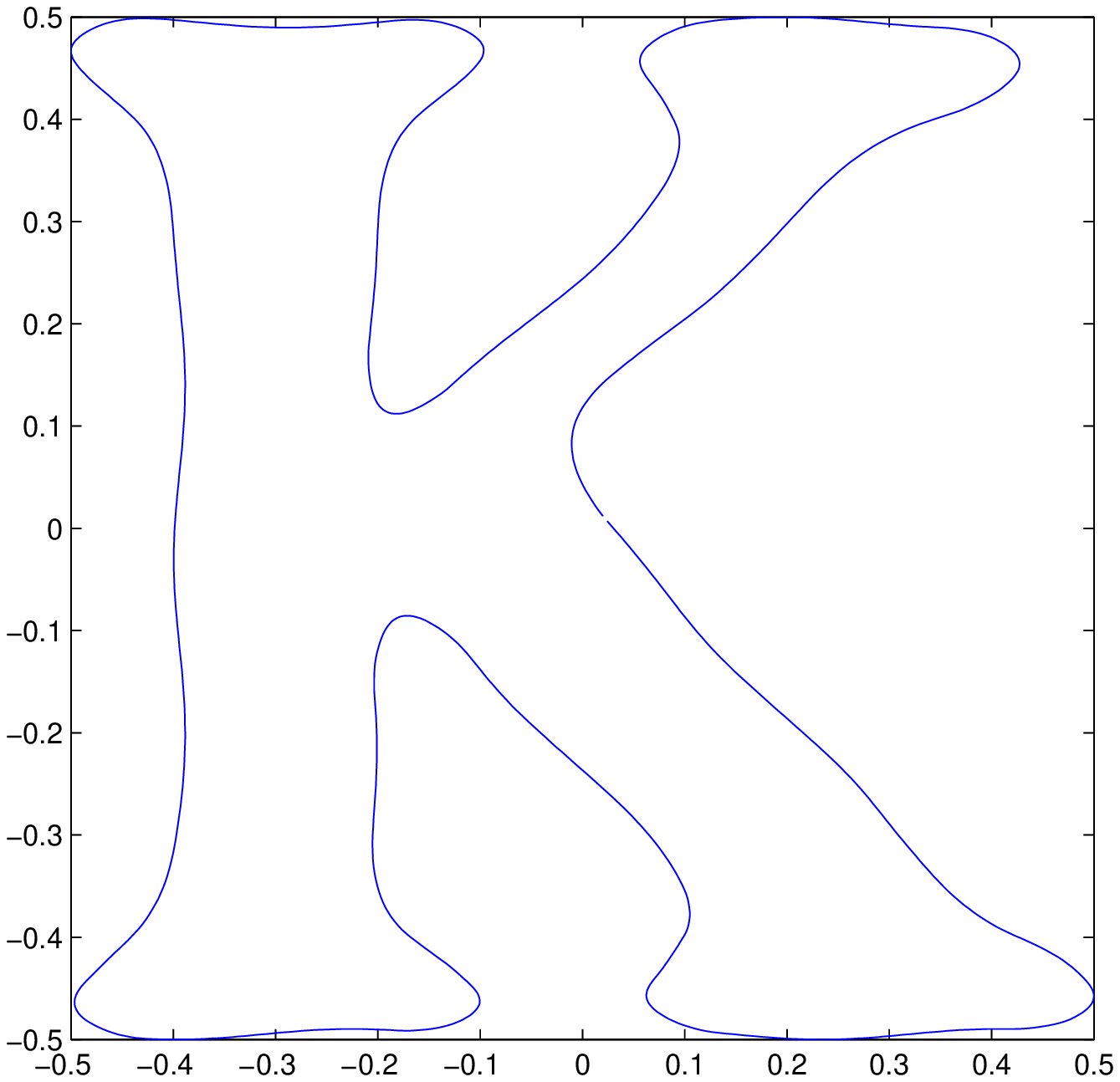}}
  \subfigure[]{\includegraphics[width=\lettersize]{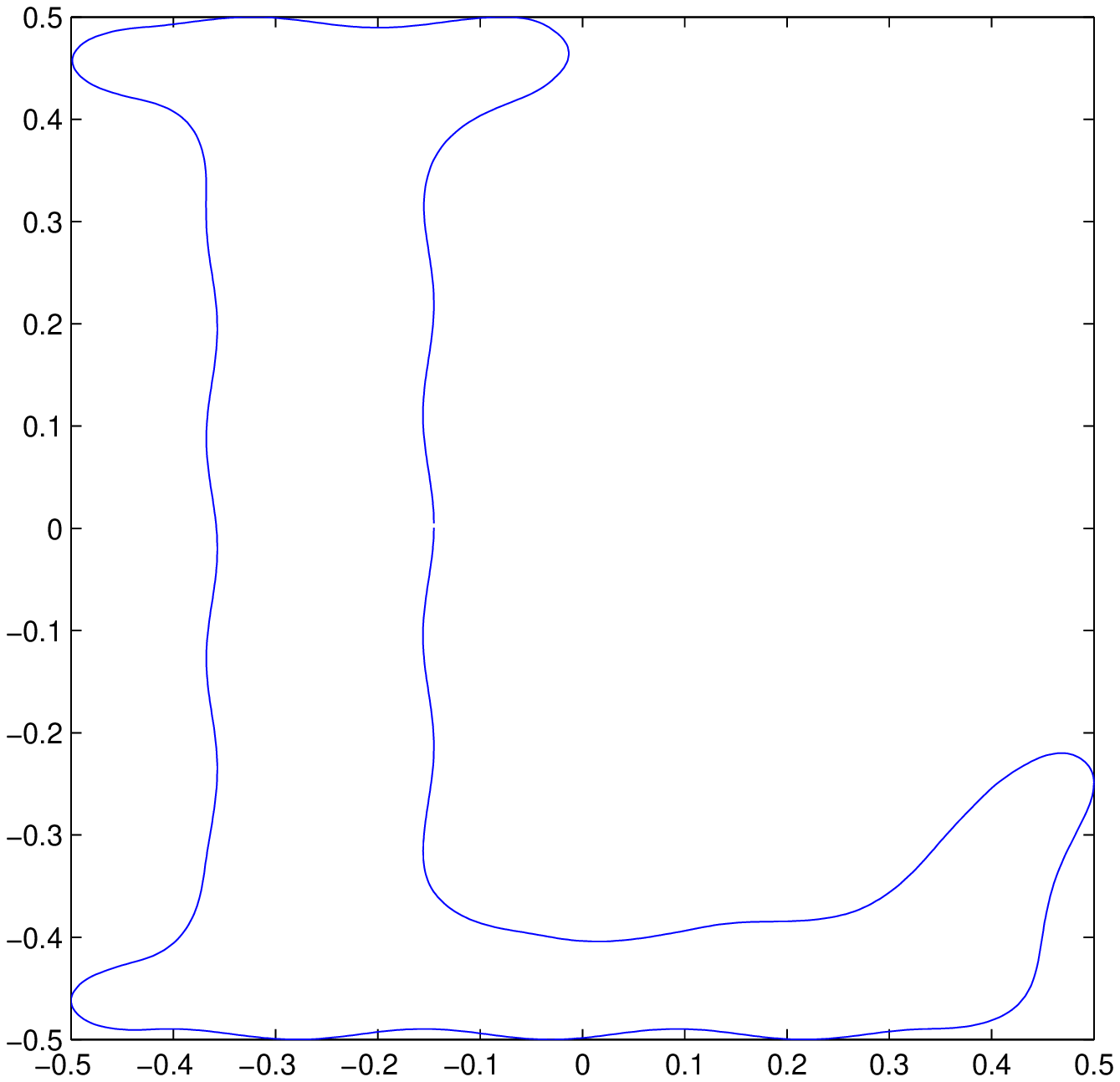}}
  \subfigure[]{\includegraphics[width=\lettersize]{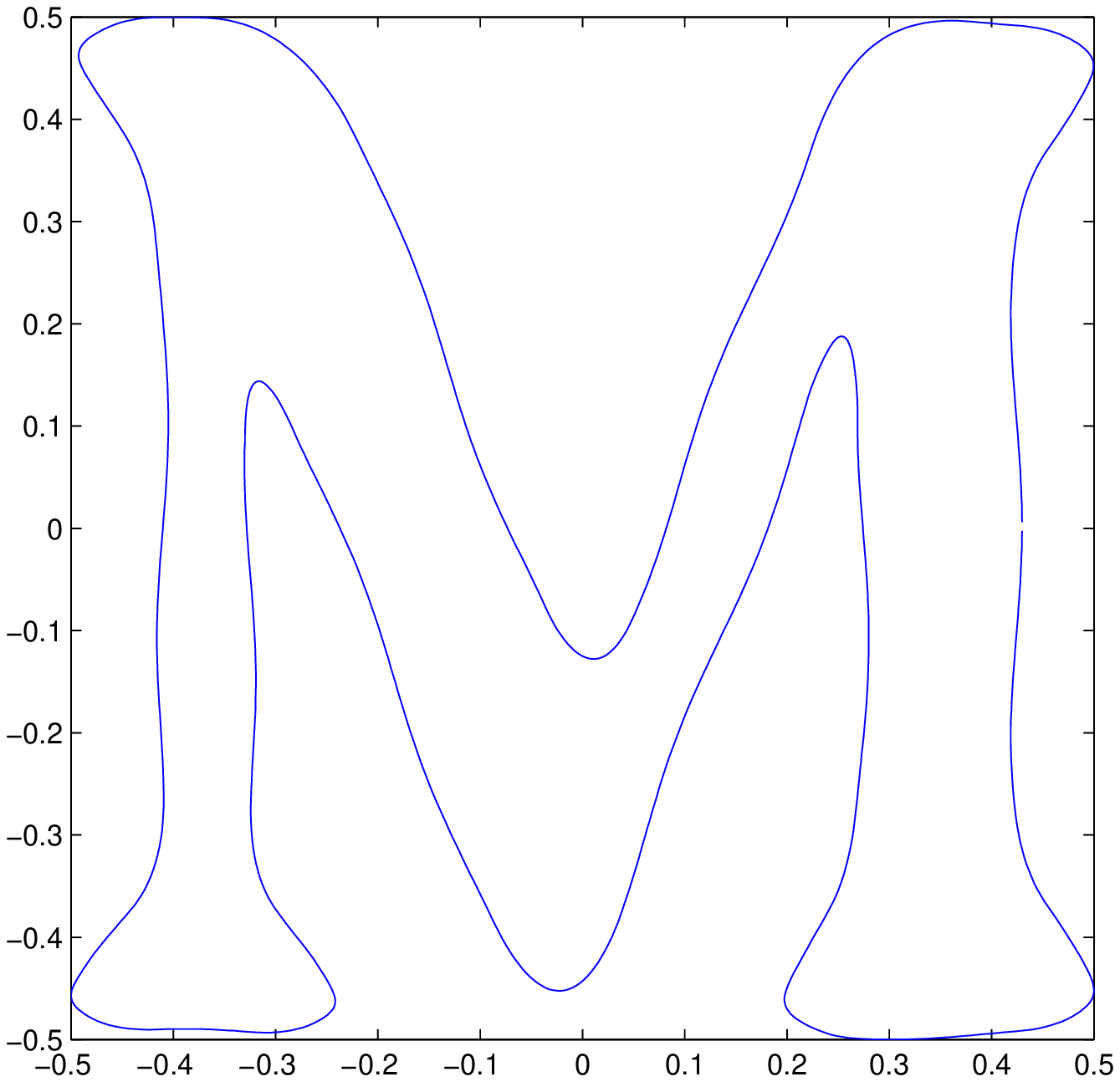}}
  \subfigure[]{\includegraphics[width=\lettersize]{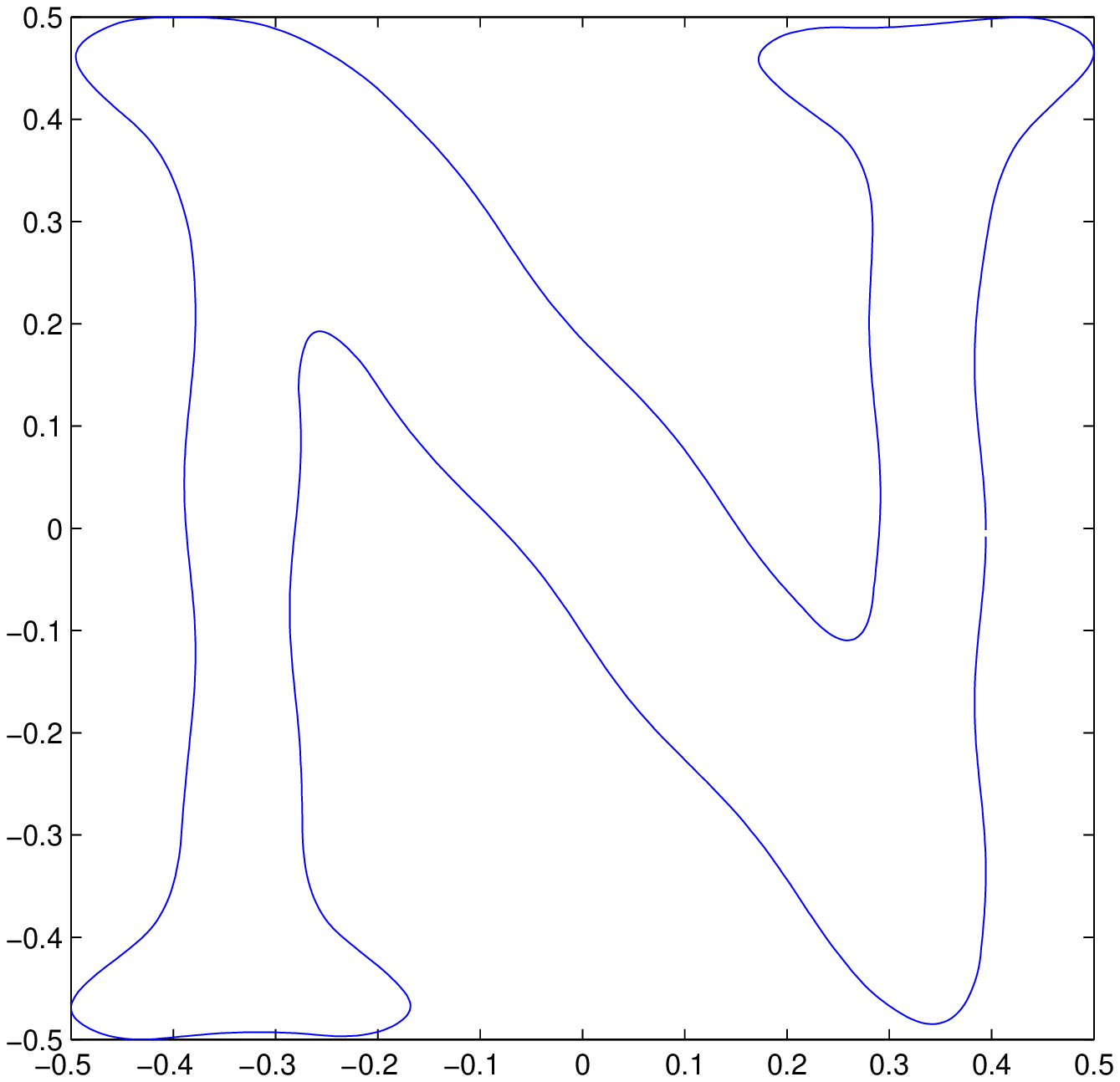}}
  \subfigure[]{\includegraphics[width=\lettersize]{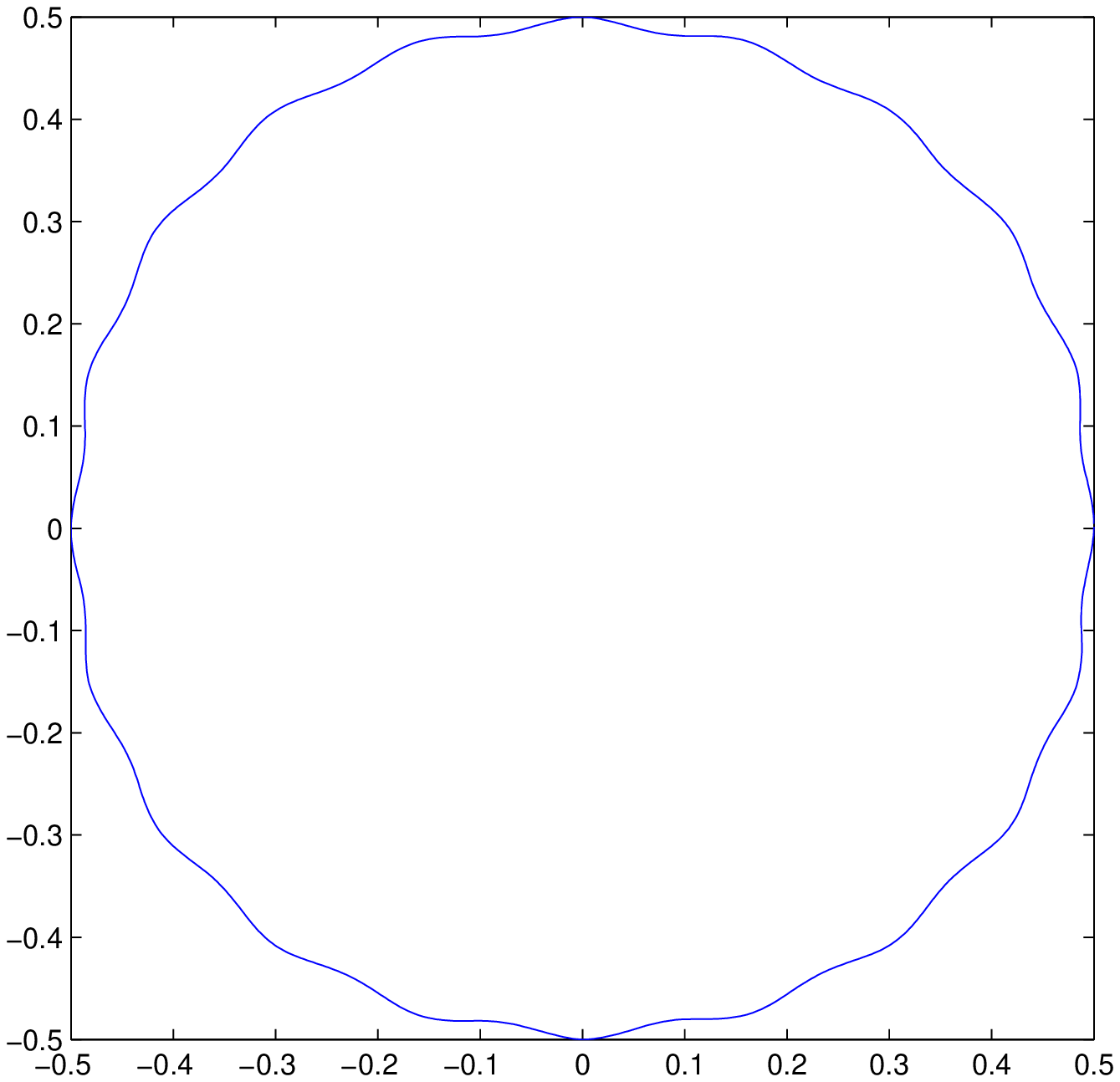}}
  \subfigure[]{\includegraphics[width=\lettersize]{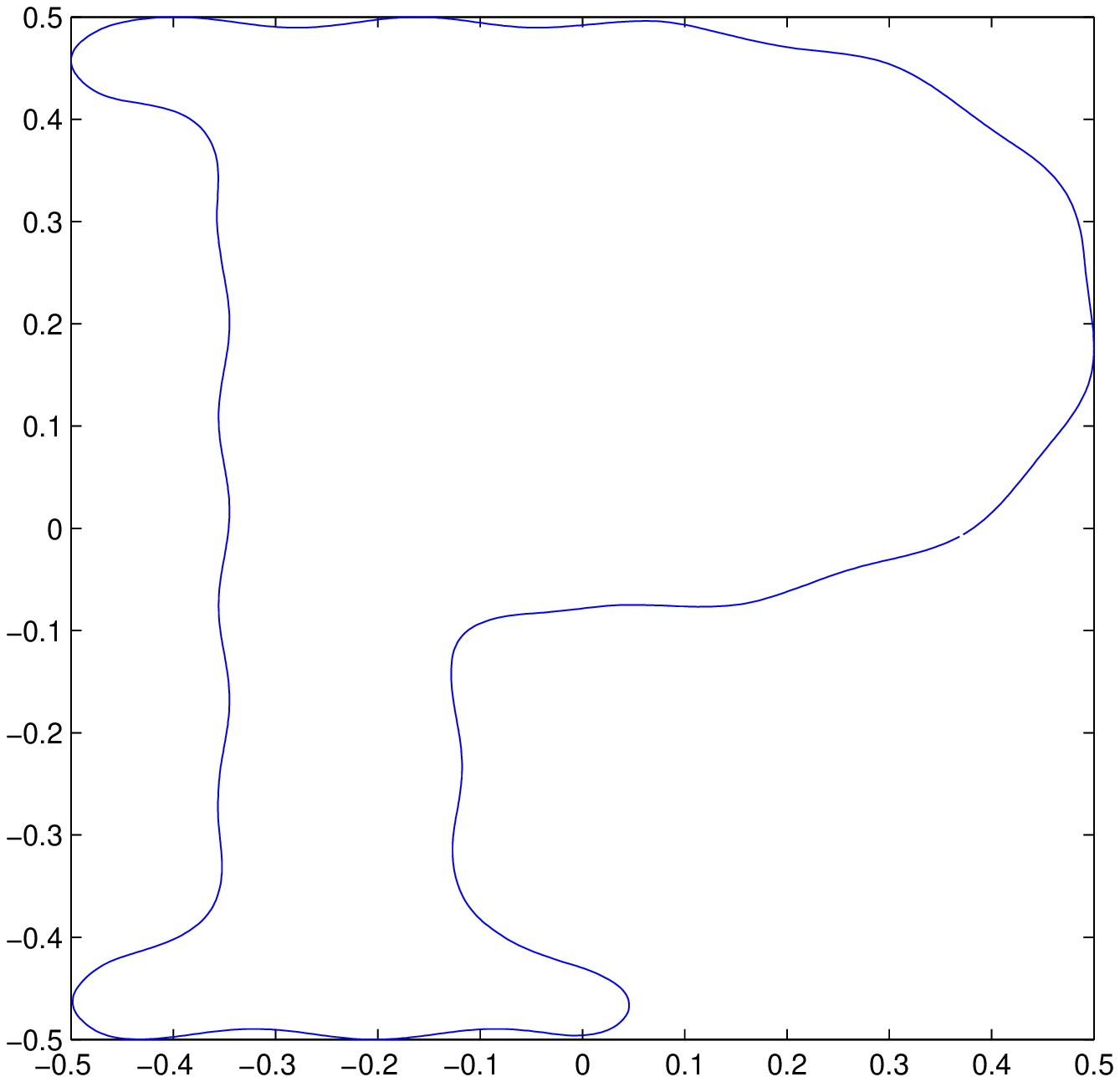}}
  \subfigure[]{\includegraphics[width=\lettersize]{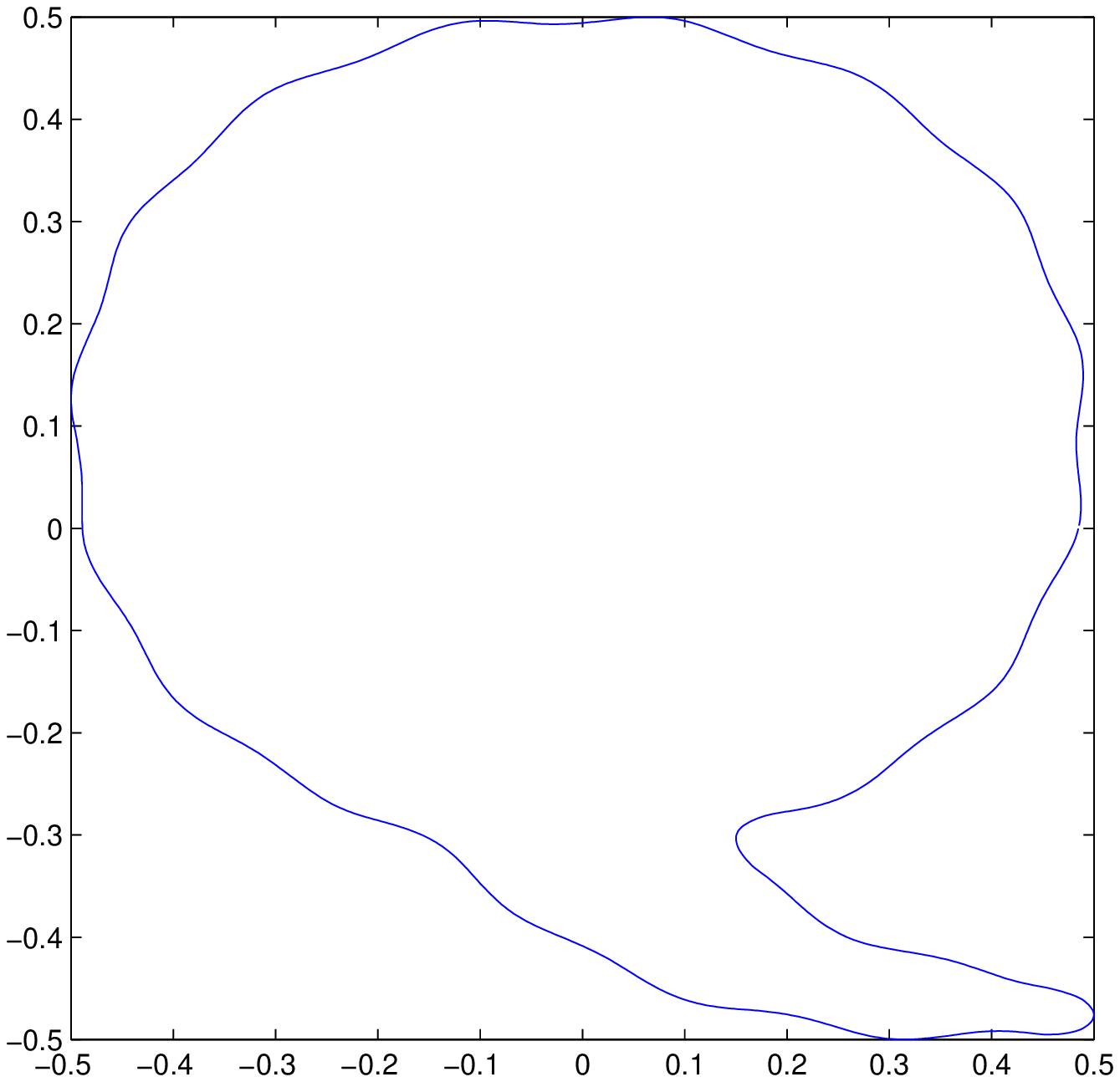}}
  \subfigure[]{\includegraphics[width=\lettersize]{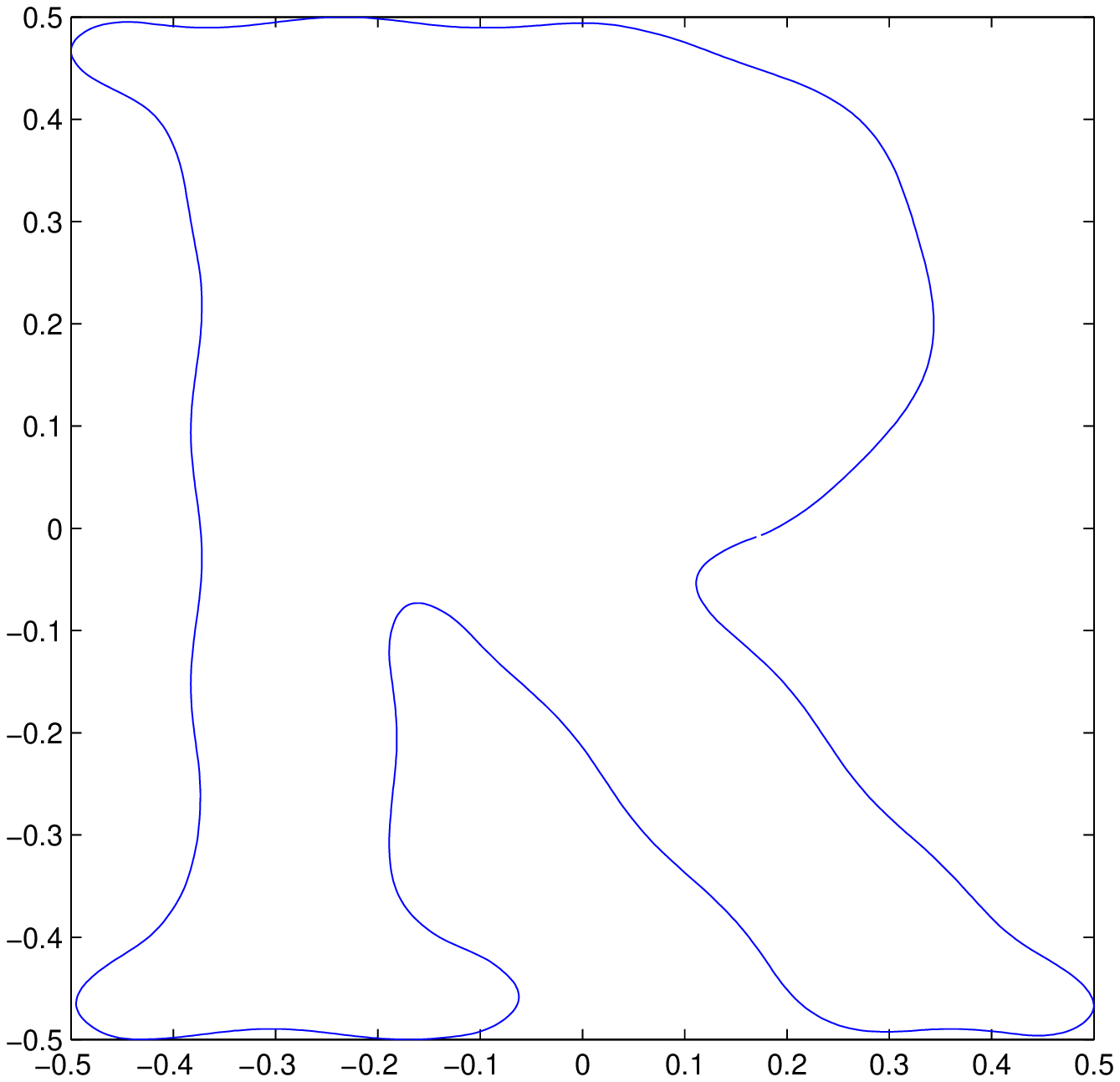}}
  \subfigure[]{\includegraphics[width=\lettersize]{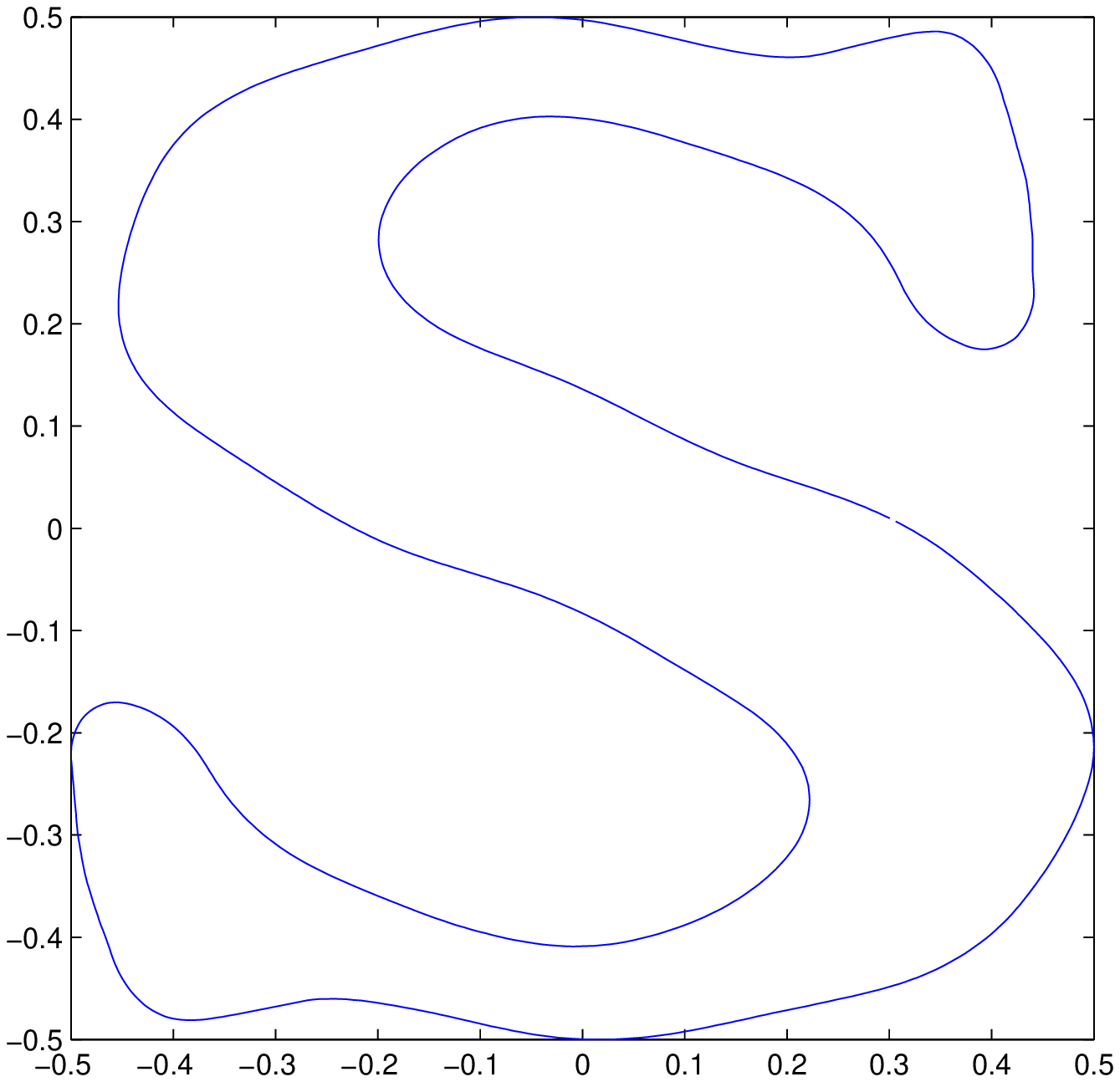}}
  \subfigure[]{\includegraphics[width=\lettersize]{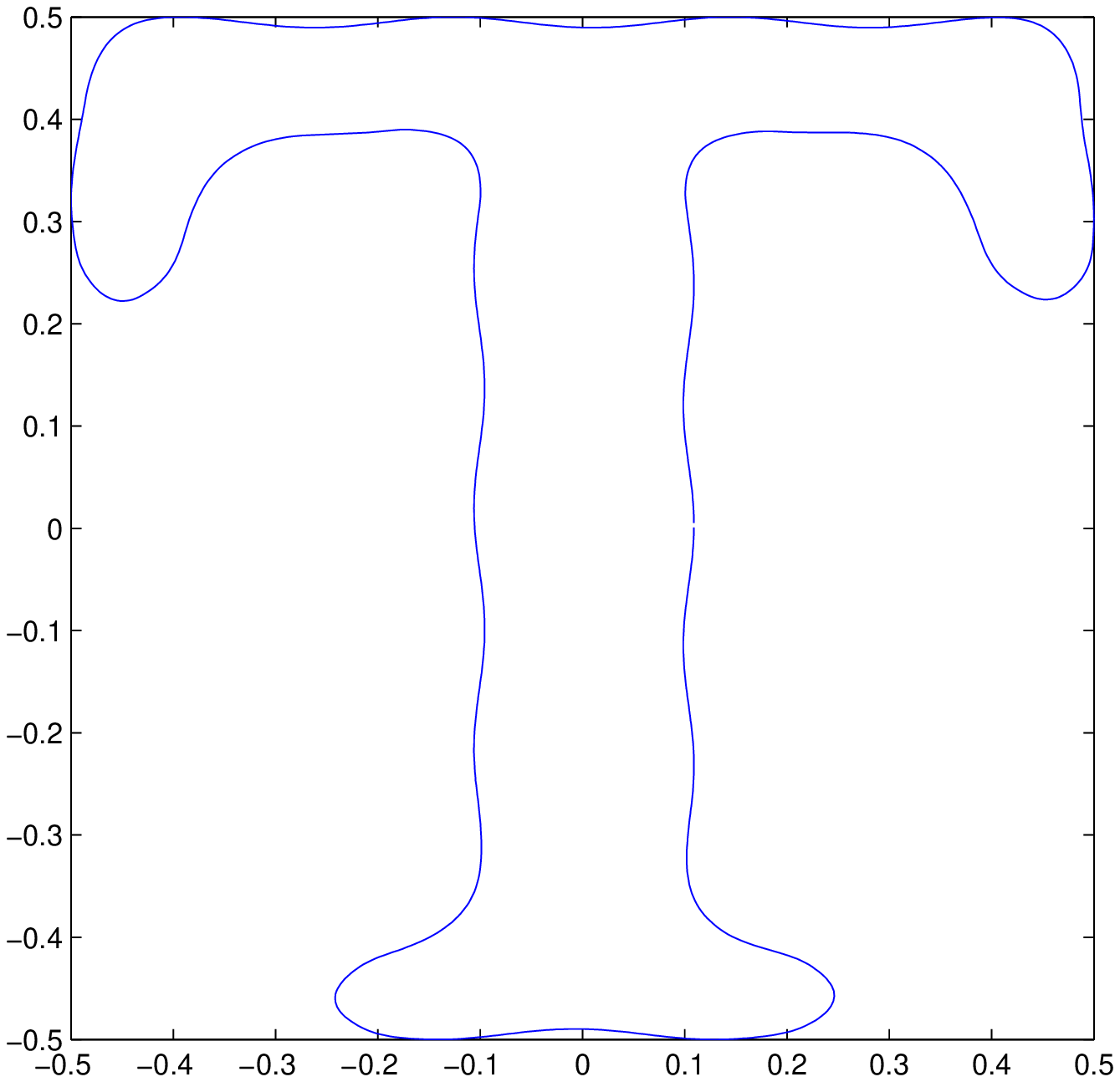}}
  \subfigure[]{\includegraphics[width=\lettersize]{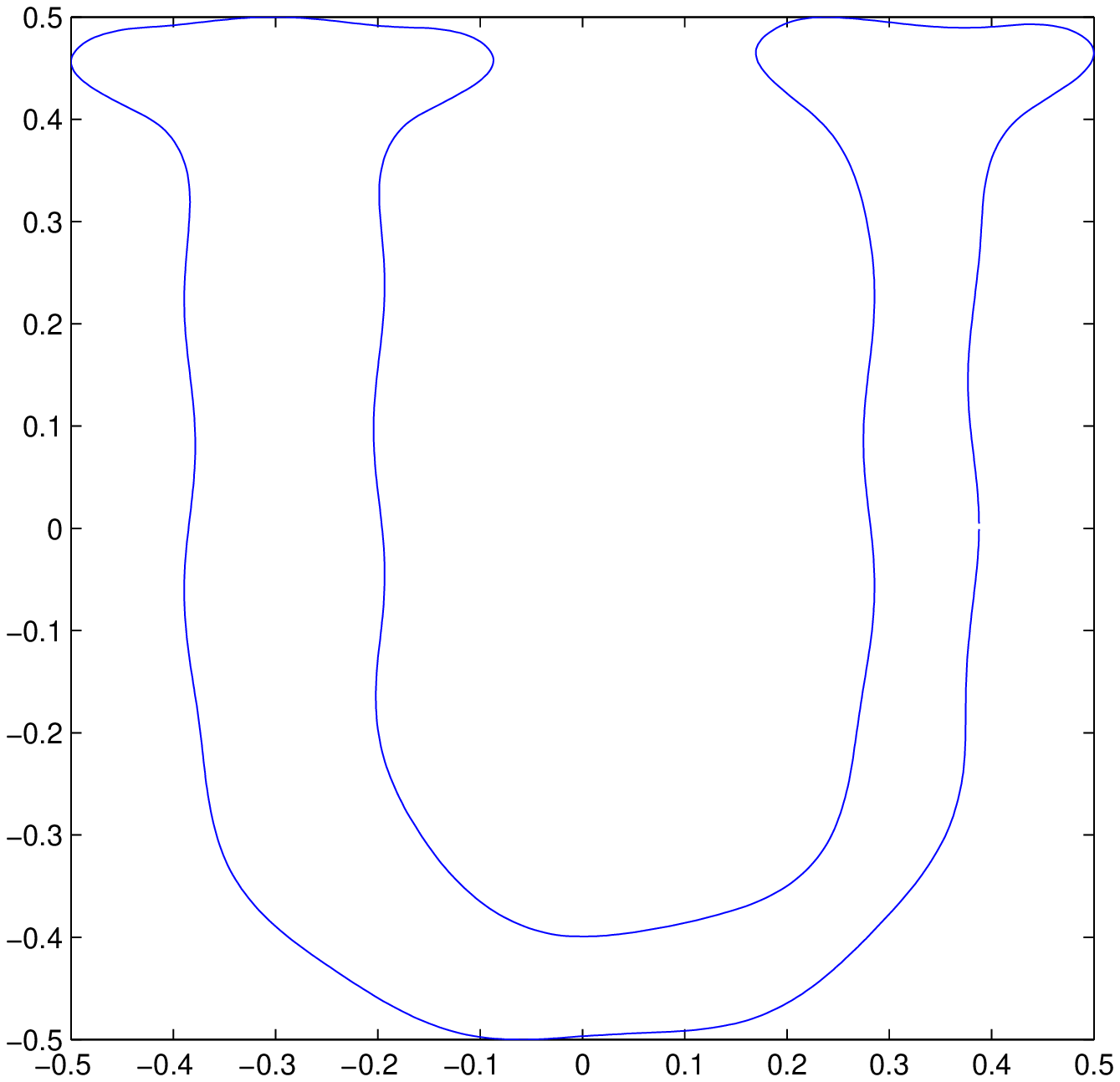}}
  \subfigure[]{\includegraphics[width=\lettersize]{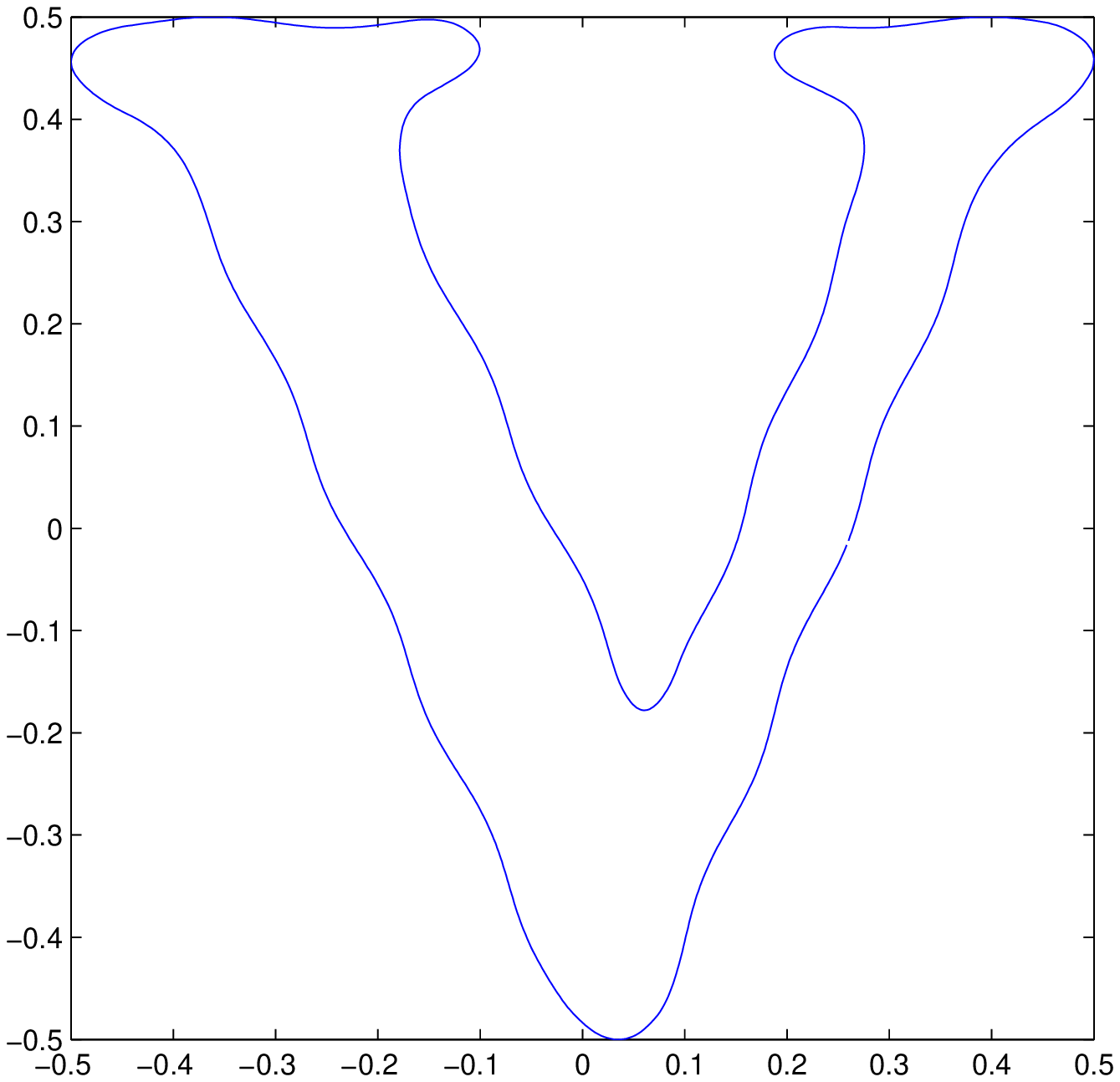}}
  \subfigure[]{\includegraphics[width=\lettersize]{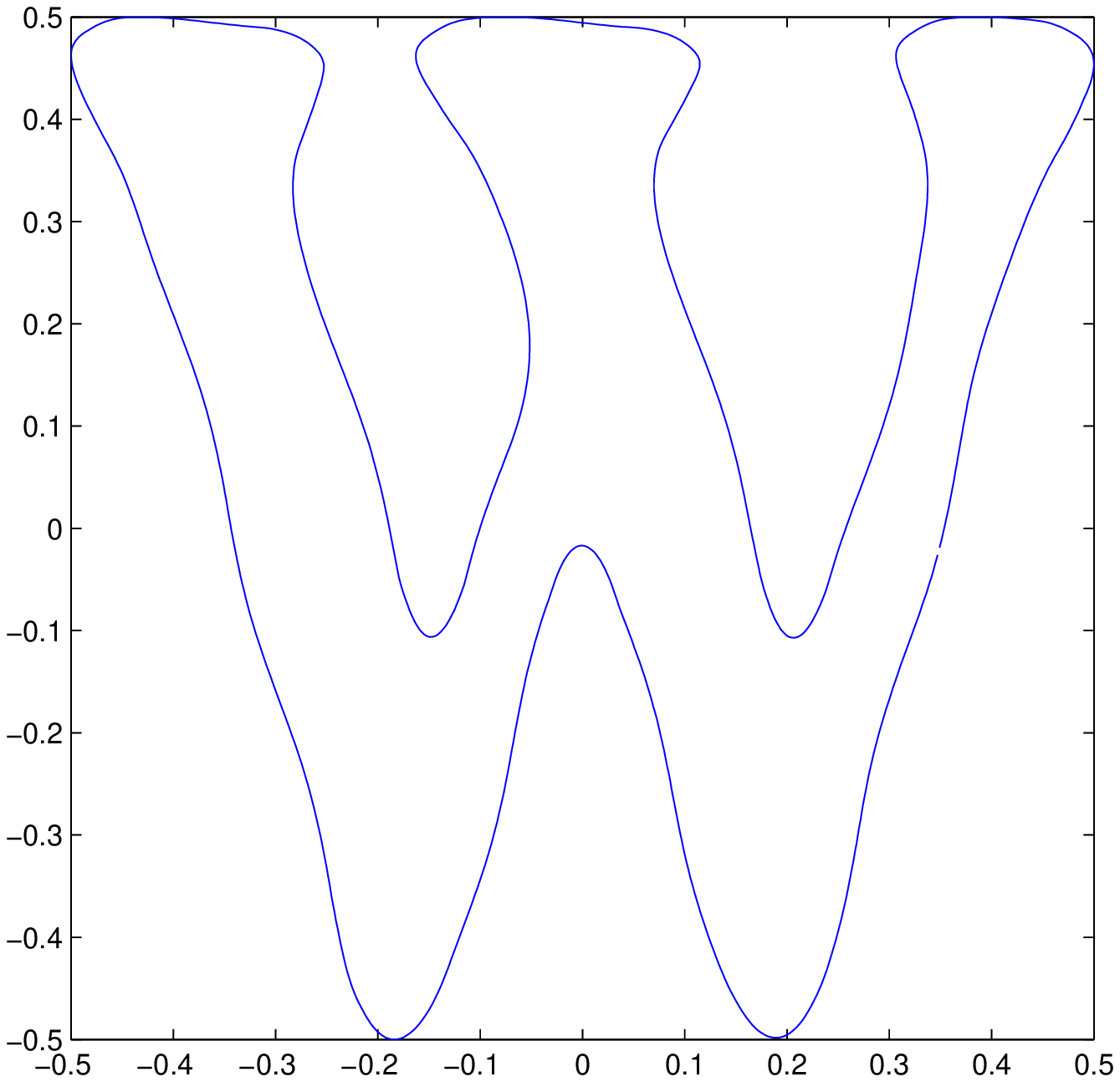}}
  \subfigure[]{\includegraphics[width=\lettersize]{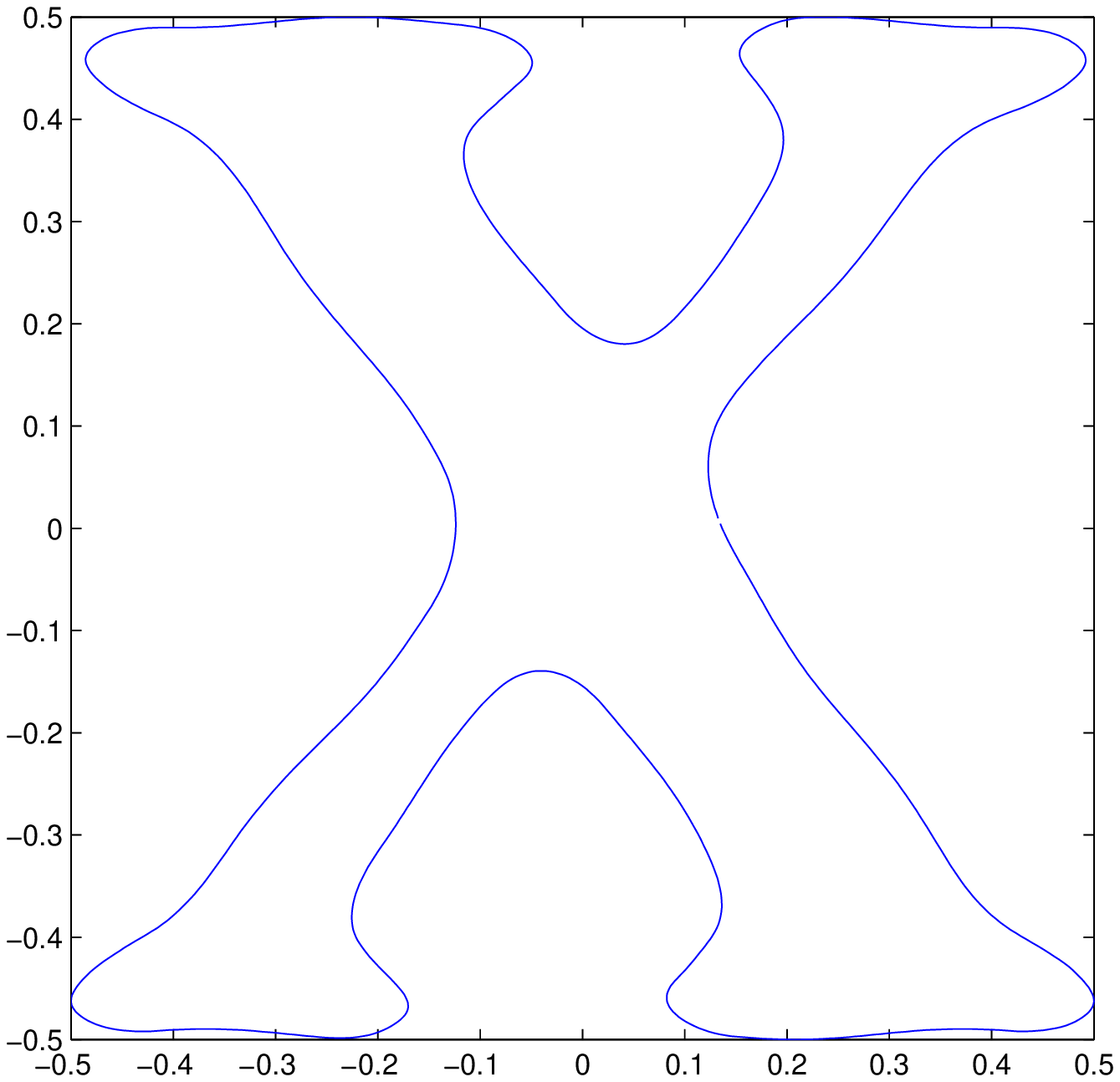}}
  \subfigure[]{\includegraphics[width=\lettersize]{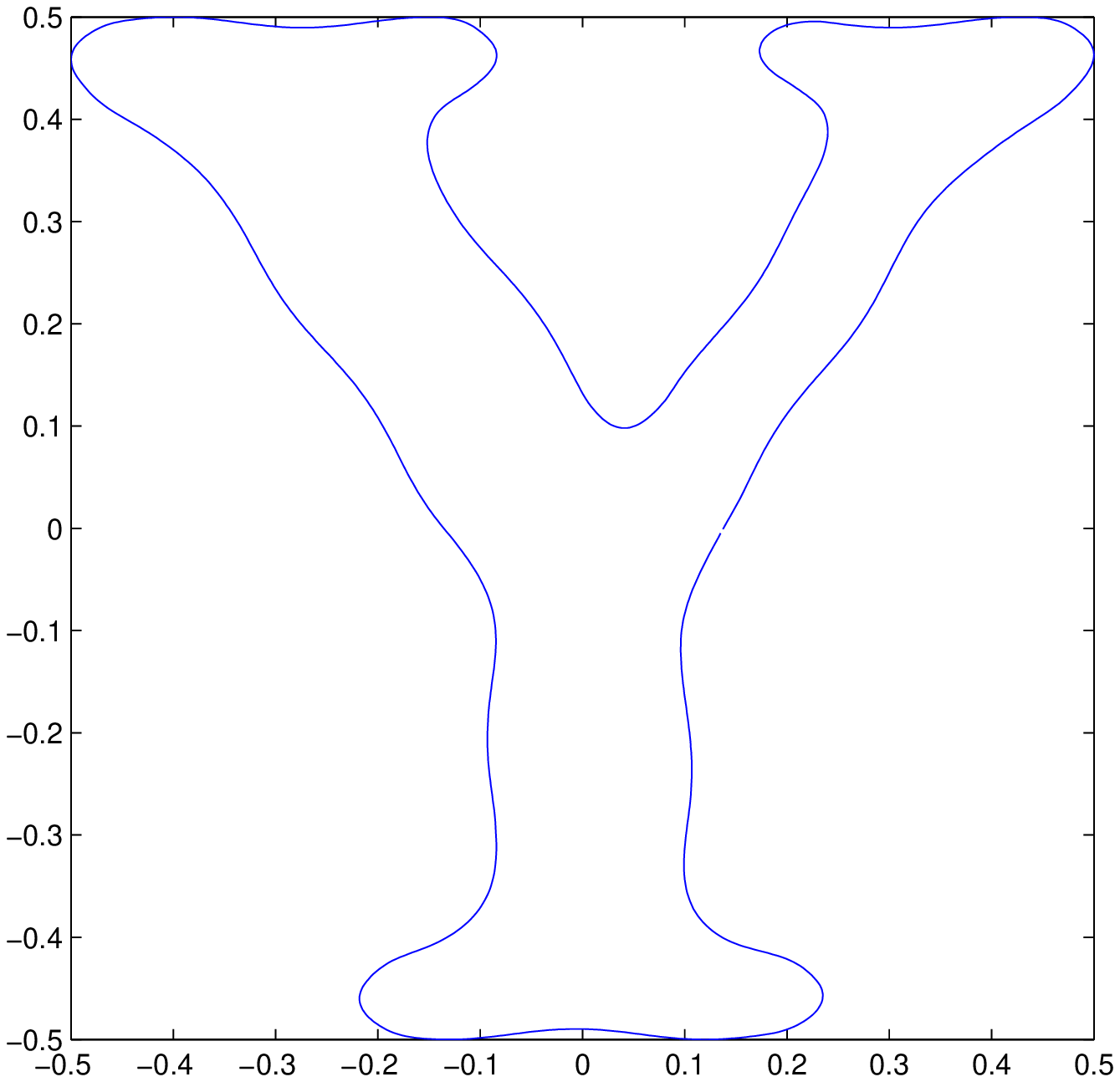}}
  \subfigure[]{\includegraphics[width=\lettersize]{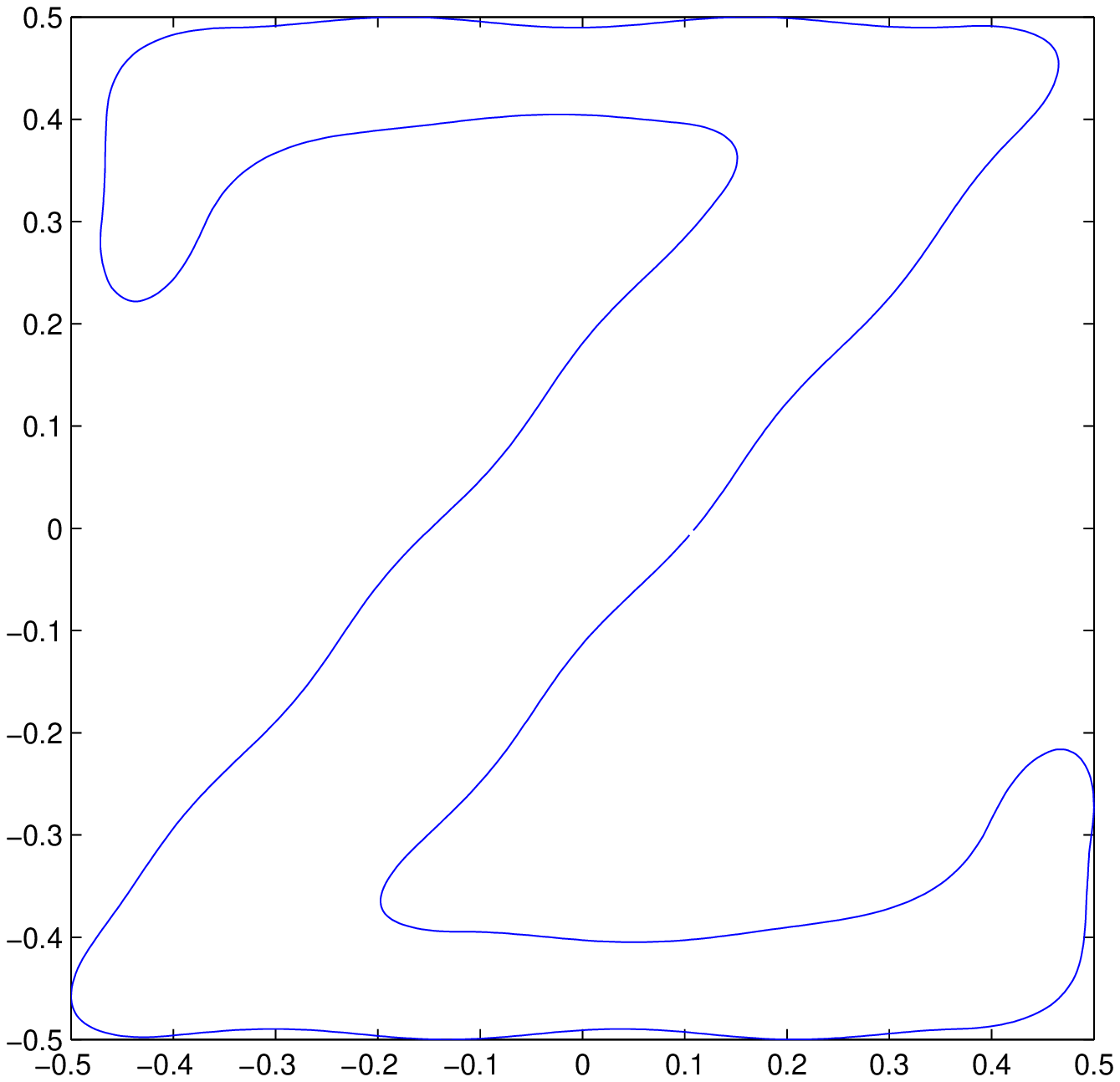}}
  \caption{Non standard letters obtained by perturbing and smoothing
    those in Figure~\ref{fig:all_letters_A_Z}.}
  \label{fig:all_ptb_letters_A_Z}
\end{figure}

\section{Conclusion}\label{sec:conclusion}
In this paper, we have designed two fast algorithms which identify
a target using a dictionary of precomputed GPTs data. The target
GPTs are computed from multistatic measurements by solving a
linear system. The first algorithm matches the computed GPTs to
precomputed ones (the dictionary elements) by finding rotation,
scaling, and translation parameters and therefore, identifies the
true target shape. The second algorithm  is based on new
invariants for the CGPTs. We have provided new shape descriptors
which are invariant under translation, rotation, and scaling. The
stability (in the presence of additive noise in multistatic
measurements) and the resolution issues for both algorithms have
been numerically investigated. The second algorithm is
computationally much cheaper than the first one. However, it is
more sensitive to measurement noise in the imaging data. To the
best of our knowledge, our procedure is the first approach for
real-time target identification in imaging using dictionary
matching. It shows that GPT-based representations are an
appropriate and natural tool for imaging. Our approach can be
extended to electromagnetic and elastic imaging as well
\cite{mc2,resol}. We also to plan to use it for target tracking
from imaging data.


\appendix
\section{Appendix: Several Technical Estimates}\label{sec:append-sever-techn}
\subsection{The truncation error in the MSR expansion}
\label{sec:app1}

Recall the expansion of the element in the MSR matrix
\eqref{eq:Vrsexp}. We prove the following estimate of the
truncation error.
\begin{proposition} \label{prop:Ers}
Let $E_{rs}$ be as in \eqref{eq:Vrsexp}. Set $\eps = \delta/R$,
the ratio between the typical length scale of the inclusion $D$
and the distance of the receivers (sources) from the inclusion.
Assume also that $\eps$ is much smaller than one. Then
\begin{equation}
|E_{rs}| \lesssim \eps^{K+2}. \label{eq:prop:Ers}
\end{equation}
\end{proposition}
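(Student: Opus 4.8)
The plan is to track the truncation error through the derivation that led to \eqref{eq:Vrsexp}. Recall that $V_{rs} = \int_{\partial D} \Gamma(x_r - y)\,(\lambda I - \mathcal{K}_D^*)^{-1}\bigl[\tfrac{\partial \Gamma_s}{\partial \nu}\bigr](y)\,ds(y)$, and that the expansion of $V_{rs}$ was obtained by replacing \emph{both} $\Gamma(x_r - y)$ and $\partial_\nu \Gamma_s(y) = \partial_\nu \Gamma(y - x_s)$ by their Taylor polynomials of degree $K$ around $y = z$. So $E_{rs}$ collects three pieces: the contribution of the remainder $e_K$ in the expansion of $\Gamma(x_r - \cdot)$, paired against the full density; the analogous remainder for $\Gamma(\cdot - x_s)$; and the cross term. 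The first step is to write $E_{rs}$ explicitly as a sum of such integrals over $\partial D$.

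Next I would estimate each ingredient on $\partial D = z + \delta\,\partial B$. For $y \in \partial D$ we have $|y - z| \le C\delta$ while $|x_r - z|, |x_s - z| \ge R$ (the sources/receivers are away from the inclusion), so the ratio controlling the Taylor remainder is $|y-z|/|x_r - z| \lesssim \delta/R = \eps$. Since $\Gamma$ is smooth and homogeneous-like away from the origin, the degree-$K$ Taylor remainder of $\Gamma(x_r - \cdot)$ at $z$ satisfies $|e_K(y)| \lesssim \tfrac{1}{|x_r - z|}\,\eps^{K+1} \lesssim R^{-1}\eps^{K+1}$, with the implied constant depending on $K$ and on bounds for the derivatives of $\Gamma$ on the relevant annulus; similarly for $\Gamma(\cdot - x_s)$ one gets a factor $R^{-1}\eps^{K+1}$ after differentiating once in $\nu$ (the normal derivative of $\Gamma(y - x_s)$ in $y$, being one order more singular, contributes an extra $R^{-1}$ but the degree-$K$ polynomial part is subtracted, so what remains is again $O(R^{-1}\eps^{K+1})$ — here one must be a little careful that "normal derivative of the Taylor polynomial'' equals "Taylor polynomial of the normal derivative'' up to one degree, which is why the order in \eqref{eq:Vrsexp} runs to $K$ on both indices). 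Then I would use the boundedness of $(\lambda I - \mathcal{K}_D^*)^{-1}$ on $L^2(\partial D)$ together with $\|\partial_\nu \Gamma_s\|_{L^2(\partial D)} \lesssim |\partial D|^{1/2} R^{-1} \lesssim \delta^{1/2} R^{-1}$ to bound the density, and finally integrate over $\partial D$, whose length is $O(\delta)$. Collecting the powers: a remainder factor $\eps^{K+1}$, a "smooth kernel on the other side'' factor $O(R^{-1})$, the density/measure factors, and carefully counting the net effect of the $\delta$'s versus $R$'s, one arrives at $|E_{rs}| \lesssim \eps^{K+2}$.

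The main obstacle, and the point deserving the most care, is the uniformity of the operator norm of $(\lambda I - \mathcal{K}_D^*)^{-1}$ and of the Taylor-remainder constants as the inclusion shrinks. The resolvent bound is scale-invariant — $\mathcal{K}_{\delta B}^*$ is unitarily equivalent to $\mathcal{K}_B^*$ under $y \mapsto \delta y$ — so $\|(\lambda I - \mathcal{K}_D^*)^{-1}\|$ depends only on $B$ and $\lambda$, not on $\delta$; this is the clean way to see it, and it is the step I would present most explicitly (alternatively one invokes Section~2.4 of \cite{AK04}). The remaining bookkeeping — keeping track of the exact power of $\eps$ and confirming it is $K+2$ rather than $K+1$ — is routine once one commits to doing the change of variables $y = z + \delta\tilde y$ at the outset and expressing everything in terms of $\eps$ and $B$; I would do that substitution first and then the powers essentially count themselves.
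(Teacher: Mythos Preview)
Your plan is essentially the paper's: decompose $E_{rs}$ into pieces carrying a Taylor remainder on one side, invoke the scale-invariance of $(\lambda I-\mathcal{K}_D^*)^{-1}$ (exactly the ``clean'' argument you single out), and count powers of $\delta$ and $R$. Two small corrections worth making when you write it up: the paper uses the two-term splitting $(f-f_K)\,g + f_K\,(g-g_K)$, so no separate cross term is needed once the first piece is paired with the \emph{full} density; and since $\Gamma$ is logarithmic in 2D, $|\partial^\alpha\Gamma|\sim R^{-|\alpha|}$ gives $|e_K|\lesssim (\delta/R)^{K+1}=\eps^{K+1}$ rather than $R^{-1}\eps^{K+1}$ --- your extra $R^{-1}$ is harmless for the conclusion but not actually there.
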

\begin{proof} From the Taylor expansion of multivariate functions (\cite{Taylor1}, Chapter 1),  we verify that the truncation error $E_{rs}$ can be written as
\begin{equation*}
\begin{aligned}
\int_{\partial D} e_K(y; x_r,z) (\lambda I - \mathcal{K}_D^*)^{-1}
\bigg[\frac{\partial \Gamma(\cdot - x_s)}{\partial \nu}\bigg]&(y) ds(y) \\
+ \int_{\partial D}  \Gamma_K(y; x_r, z) &(\lambda I -
\mathcal{K}_D^*)^{-1} \bigg[\frac{\partial}{\partial \nu}
e_K(\cdot; z, x_s)\bigg](y) ds(y).
\end{aligned}
\end{equation*}
Here, $\Gamma_K(y; x_r,z)$ and $e_K(y; x_r,z)$ (and similarly
$e_K(y; z, x_s)$) are given by\begin{equation*}
\begin{aligned}
\Gamma_K(y; x_r,z) &= \sum_{k=1}^K \sum_{|\alpha| = k} \frac{(-1)^{|\alpha|}}{\alpha!} \partial^{\alpha} \Gamma(x_r - z) (y-z)^\alpha,\\
 e_K(y; x_r, z) &= \sum_{|\alpha|=K+1} \Big( \frac{1}{\alpha!} \int_0^1 (1-s)^K \partial^\alpha \Gamma(x_r - z - s(y-z)) ds \Big) (y-z)^\alpha.
\end{aligned}
\end{equation*}

Due to the invariance relation \eqref{eq:ImKrot}, the operator
$(\lambda I - \mathcal{K}_D^*)^{-1}$, as an operator from the
space $L^2(\partial D)$ to itself, is bounded uniformly with
respect to the scaling of $D$. Consequently, the first term in
$E_{rs}$ is bounded by
\begin{equation*}
C \|e_K(\cdot; x_r,z)\|_{L^\infty(\partial D)} \|\frac{\partial
\Gamma(\cdot - x_s)}{\partial \nu}\|_{L^2(\partial D)} |\partial
D|^{\frac{1}{2}} \le C \|e_K\|_{L^\infty(\partial D)}
\|\frac{\partial \Gamma(\cdot - x_s)}{\partial
\nu}\|_{L^\infty(\partial D)} |\partial D|.
\end{equation*}
Assume that $z \in D$; the distance between $\overline{D}$ and the
receivers (sources) is of order $R$. From the above expression of
$e_K$, the explicit form of $\partial^\alpha \Gamma$ in
\eqref{eq:DGamma}, and the fact that $|y-z| \le C\delta$ for $y
\in \overline{D}$, we have
\begin{equation*}
|e_K(y; x_r, z)| \le C \left(\sum_{|\alpha| = K+1}
\frac{1}{\alpha!} \|\partial^\alpha \Gamma_r(x_r -
\cdot)\|_{\mathcal{C}(\overline{D})}\right) |y-z|^{K+1} \le C
\left(\frac{\delta}{R}\right)^{K+1}.
\end{equation*}
Similarly, we have $\|\partial_\nu \Gamma(\cdot -
x_s)\|_{L^\infty(\partial D)} \le CR^{-1}$. The measure $|\partial
D|$ in dimension two is of order $\delta$. Substituting these
estimates into the bound for the first term in $E_{rs}$, we see
that it is bounded by $C\eps^{K+2}$.

The second term can be bounded from above by
\begin{equation*}
C \|\Gamma_K\|_{L^\infty(\partial D)} \|\frac{\partial e_K(\cdot;
z, x_s)}{\partial \nu}\|_{L^\infty(\partial D)} |\partial D|.
\end{equation*}
We have $\|\Gamma_K(\cdot; x_r, z)\|_{L^\infty(\partial D)} \le
C\eps$, which is the order of the leading term. Further, from the
explicit form of $e_K$, we verify that
\begin{equation*}
\|\frac{\partial e_K(\cdot; z, x_s)}{\partial
\nu}\|_{L^\infty(\partial D)} \le C \left( \|\Gamma(\cdot -
x_s)\|_{\mathcal{C}^{K+2}(\overline{D})} \delta^{K+1} +
\|\Gamma(\cdot - x_s)\|_{\mathcal{C}^{K+1}(\overline{D})} \delta^K
\right) \le C\frac{\delta^K}{R^{K+1}}.
\end{equation*}
 As
a result, the above upper bound for the second term in $E_{rs}$ is
of order $\eps^{K+2}$ as well. This proves \eqref{eq:prop:Ers}.
\end{proof}

\begin{proposition} The solution $u_s(x)$ defined by the transmission problem \eqref{eq:transm}
satisfies the symmetry property
\begin{equation}
u_s(x_r) = u_r(x_s). \label{eq:Vsym}
\end{equation}
\end{proposition}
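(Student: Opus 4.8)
The plan is to prove \eqref{eq:Vsym} by the classical reciprocity argument, i.e. by applying Green's second identity to the pair $u_s$, $u_r$ of solutions of \eqref{eq:transm} with sources at $x_s$ and $x_r$. Writing $\sigma = 1 + (\kappa-1)\chi_D$, I would fix a ball $B_\rho$ large enough to contain $\overline{D}$, $x_r$ and $x_s$, and consider
\begin{equation*}
I_\rho = \int_{D}\bigl(u_s\,\nabla\!\cdot(\sigma\nabla u_r) - u_r\,\nabla\!\cdot(\sigma\nabla u_s)\bigr)\,dx + \int_{B_\rho\setminus\overline{D}}\bigl(u_s\,\nabla\!\cdot(\sigma\nabla u_r) - u_r\,\nabla\!\cdot(\sigma\nabla u_s)\bigr)\,dx .
\end{equation*}
Since $u_r$ is harmonic in $D$ (the source $x_r$ lies outside $\overline{D}$) the first integral vanishes, while $\nabla\!\cdot(\sigma\nabla u_r) = \Delta u_r = \delta_{x_r}$ in $B_\rho\setminus\overline{D}$ and similarly for $u_s$, so evaluating the distributional pairings gives $I_\rho = u_s(x_r) - u_r(x_s)$.

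The next step is to recompute $I_\rho$ by integrating by parts on $D$ (where $\sigma\equiv\kappa$) and on $B_\rho\setminus\overline{D}$ (where $\sigma\equiv 1$) separately. The bulk terms $\int\sigma\nabla u_s\cdot\nabla u_r$ cancel, leaving an interface integral over $\partial D$ and an integral over $\partial B_\rho$. With $\nu$ the outward normal of $D$, the $\partial D$ part has the form $\int_{\partial D}\bigl[\kappa\bigl(u_s\,\partial_\nu u_r|_- - u_r\,\partial_\nu u_s|_-\bigr) - \bigl(u_s\,\partial_\nu u_r|_+ - u_r\,\partial_\nu u_s|_+\bigr)\bigr]\,ds$, where $u_s$, $u_r$ need no $\pm$ label by the continuity condition. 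The flux condition $\partial_\nu u_r|_+ = \kappa\,\partial_\nu u_r|_-$ (and likewise for $u_s$) makes this integrand vanish identically. Hence for every large $\rho$,
\begin{equation*}
u_s(x_r) - u_r(x_s) = \int_{\partial B_\rho}\bigl(u_s\,\partial_\nu u_r - u_r\,\partial_\nu u_s\bigr)\,ds ,
\end{equation*}
and it remains to show that this far-field integral tends to $0$ as $\rho\to\infty$.

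To pass to the limit, I would split $u_s = \Gamma_s + \mathcal{S}_D[\phi_s]$ as in \eqref{eq:dfield}, and similarly for $u_r$. The $\Gamma_s$–$\Gamma_r$ cross term is, by Green's identity on $B_\rho$ applied to $\Gamma_s$ and $\Gamma_r$ (smooth in $B_\rho$ apart from their point singularities), exactly $\Gamma(x_r - x_s) - \Gamma(x_s - x_r) = 0$ since $\Gamma$ is even; this is precisely where the logarithmic growth of the two-dimensional fundamental solution is rendered harmless. For the three remaining terms, one invokes that $\phi_s = (\lambda I - \mathcal{K}_D^*)^{-1}[\partial_\nu\Gamma_s|_{\partial D}]$ has zero mean on $\partial D$ (observed already in Section~\ref{sec:struct-mult-resp}), so that $\mathcal{S}_D[\phi_s](x) = O(|x|^{-1})$ and $\nabla\mathcal{S}_D[\phi_s](x) = O(|x|^{-2})$ at infinity; combined with $\Gamma_s = O(\log|x|)$, $\nabla\Gamma_s = O(|x|^{-1})$ and $|\partial B_\rho| = 2\pi\rho$, each such term is $O(\rho^{-1}\log\rho)$ and vanishes in the limit. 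This yields $u_s(x_r) - u_r(x_s) = 0$.

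The only genuine obstacle I anticipate is technical rather than conceptual: justifying the decay rate of $\mathcal{S}_D[\phi_s]$ and, above all, of its gradient at infinity. This comes from the zero-mean property of the density together with a first-order Taylor expansion of $\Gamma(x-y)$ in $y$ about the center of $D$ — essentially the computation already performed in Section~\ref{sec:struct-mult-resp} — or it can simply be quoted from the asymptotic expansion in \cite{AK04}. Everything else is the routine reciprocity bookkeeping outlined above.
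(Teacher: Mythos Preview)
Your proposal is correct and follows essentially the same reciprocity argument as the paper: Green's second identity on the exterior of $D$ inside a large ball, vanishing of the $\partial D$ interface terms by the transmission conditions, and decay of the far-field integral via the representation $u_s = \Gamma_s + \mathcal{S}_D[\phi_s]$ with $\phi_s$ of zero mean. The only cosmetic difference is that the paper excises small balls around $x_r$ and $x_s$ and passes to the limit $\eps\to 0$ to extract $u_s(x_r)$ and $u_r(x_s)$, whereas you invoke the distributional equation $\Delta u_r = \delta_{x_r}$ directly; the excision is precisely the classical justification of your step ``evaluating the distributional pairings gives $I_\rho = u_s(x_r) - u_r(x_s)$'' together with the subsequent integration by parts on a region where the functions are singular.
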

\begin{proof} Let $\Omega^\eps_s$ be the the ball of radius $\eps$
centered at $x_s$, and $\Omega^\eps_r$ the ball of radius $\eps$
centered at $x_r$. Let $\Omega_\eps$ be the domain $B_R
\backslash(\Omega^\eps_r \cup \Omega^\eps_s \cup D)$ where $B_R$
is a sufficiently large ball with radius $R$. Then we have
\begin{equation*}
\begin{aligned}
0 &= \int_{\Omega_\eps} \bigg(u_s(x) \Delta u_r(x) - u_r(x) \Delta
u_s(x)\bigg) dx = \int_{\partial \Omega_\eps} \bigg( u_s(x)
\frac{\partial u_r}{\partial n} (x) - u_r(x) \frac{\partial
u_s}{\partial n} (x)
\bigg) ds(x)\\
&= -\int_{\partial \Omega^\eps_s} \bigg( u_s(x) \frac{\partial
u_r}{\partial n} (x) - u_r(x) \frac{\partial u_s}{\partial n} (x)
\bigg) ds(x) - \int_{\partial \Omega^\eps_r} \bigg( u_s(x)
\frac{\partial u_r}{\partial n} (x) - u_r(x) \frac{\partial
u_s}{\partial n} (x) \bigg)
 ds(x)\\
 & \quad - \int_{\partial D} \bigg( u_s(x) \frac{\partial u_r}{\partial n}
(x)\Big|_+ - u_r(x) \frac{\partial u_s}{\partial n} (x)\Big|_+
\bigg) ds(x) + \int_{\partial B_R}\bigg(  u_s(x) \frac{\partial
u_r}{\partial n} (x)\Big|_+ -
u_r(x) \frac{\partial u_s}{\partial n} (x)\Big|_+ \bigg) ds(x)\\
 &= J^\eps_s + J^\eps_r + J_D + J_R.
\end{aligned}
\end{equation*}

For $J_D$, thanks to the jump conditions in \eqref{eq:transm}, we
have that
\begin{equation*}
J_D = \kappa \int_{\partial D} \bigg( u_r(x) \frac{\partial
u_s}{\partial n} (x)\Big|_-  - u_s(x) \frac{\partial u_r}{\partial
n} (x)\Big|_- \bigg) ds(x) = \kappa \int_D  \bigg( u_r(x) \Delta
u_s(x) - u_s(x) \Delta u_r(x) \bigg) dx = 0.
\end{equation*}

The other two terms $J^\eps_s$ and $J^\eps_r$ can be treated
similarly; hence we focus on the first item. We've shown that
$u_s(x) = \Gamma(x-x_s) + \mathcal{S}_D[\phi_s]$. In a
neighborhood of $\Omega^\eps_s$,  we have
\begin{equation*}
\|u_r\|_{L^\infty} + \|\nabla u_r\|_{L^\infty} + \|\mathcal{S}_D
[\phi_s]\|_{L^\infty} + \|\nabla \mathcal{S}_D [\phi_s]
\|_{L^\infty} \le C.
\end{equation*}
Consequently,
\begin{equation*}
\left| \int_{\partial \Omega^\eps_s} u_s(x) \frac{\partial
u_r}{\partial n} (x) \right| \le C\int_{\partial B_\eps(x_s)} (1+
|\log \eps|) ds(x) \le C\eps |\log \eps|.
\end{equation*}
\begin{equation*}
\left| \int_{\partial \Omega^\eps_s} u_r(x) \left(\frac{\partial
u_s }{\partial n} (x) - \frac{\partial \Gamma }{\partial n} (x -
x_s) \right) \right| ds(x) \le \left| \int_{\partial
\Omega^\eps_s} u_r(x) \frac{\partial \mathcal{S}_D
[\phi_s]}{\partial n} (x) ds(x)\right| \le  C\eps.
\end{equation*}
These estimates imply that
\begin{equation*}
\lim_{\eps \to 0} J^\eps_s = \lim_{\eps \to 0} \int_{\partial
B_\eps(x_s)} u_r(x_s + y) \frac{\partial \Gamma}{\partial n} (y)
ds(y) = \lim_{\eps \to 0} \frac{1}{2\pi \eps} \int_0^{2\pi} \eps
u_r(x_s + \eps \theta) d\theta = u_r(x_s).
\end{equation*}
The same analysis applied to $J^\eps_r$ shows that $\lim_{\eps \to
0} J^\eps_r = -u_s(x_r)$.

To control $J_R$, we recall the fact that $\mathcal{S}_D[\phi]$
decays as $|x|^{-1}$ and $\nabla \mathcal{S}_D[\phi]$ decays as
$|x|^{-2}$ for $\phi \in L^2(\partial D)$ satisfying
$\int_{\partial D} \phi ds = 0$; these estimates imply that the
logarithmic part of $u_s$ dominates. Therefore,
\begin{equation*}
\lim_{R \to \infty} J_R = \lim_{R \to \infty} \int_{\partial B_R}
\log|x-x_s| \frac{\langle \nu_x, x-x_r\rangle}{|x-x_r|^2} -
\log|x-x_r| \frac{\langle \nu_x, x-x_s\rangle}{|x-x_s|^2} ds(x).
\end{equation*}
The integrand above can be written as
\begin{equation*}
\left(\log \frac{|x-x_s|}{|x-x_r|}\right) \frac{\langle \nu_x, x -
x_r\rangle}{|x-x_r|^2} + \log|x-x_r| \left[ \frac{\langle \nu_x, x
- x_r\rangle}{|x-x_r|^2} - \frac{\langle \nu_x, x -
x_s\rangle}{|x-x_s|^2}\right].
\end{equation*}
We verify that the first term is of order $o(\frac{1}{R})$; its
contribution to the limiting integral is hence negligible. The
second term in the integrand can be further written as
\begin{equation*}
\log|x-x_r| \left[ \langle \nu_x, x - x_r \rangle
\left(\frac{1}{|x - x_r|^2} - \frac{1}{|x-x_s|^2}\right) +
\frac{\langle \nu_x, x - x_r - (x - x_s)\rangle}{|x-x_s|^2}
\right].
\end{equation*}
From
\begin{equation*}
\frac{1}{|x - x_r|^2} - \frac{1}{|x-x_s|^2} = \frac{|x_s|^2 -
|x_r|^2 + 2\langle x, x_r - x_s \rangle}{|x - x_r|^2 |x-x_s|^2},
\end{equation*}
we verify that the second term in the integrand is of order
$O(\log R/R^2)$; hence its contribution to the limiting integral
is also zero. To summarize, we have $\lim_{R \to \infty} J_R = 0$.

From the above analysis, we take the limit $\eps \to 0, R \to
\infty$ on the equality $0 = J^\eps_s + J^\eps_r + J_D + J_R$ and
conclude that \eqref{eq:Vsym} holds.
\end{proof}

\subsection{Proof of formula \eqref{eq:DGamma}}
\label{sec:app2} Formula \eqref{eq:DGamma} is well-known. We
include a proof for reader's sake.

In order to prove \eqref{eq:DGamma}, we need to find the
derivative of the function $\log |x|$. To this end, we consider
the Taylor expansion of the logarithmic function around the point
$x$. The most convenient method for this expansion is to view the
space variables as complex numbers. For a small perturbation $z$
of the point $x$ ($x, z \in \mathbb{C}$), we calculate
\begin{equation*}
\log |x-z| - \log|x|= \frac{1}{2} \left([\log (x - z) - \log x ]+
[\log (\overline{x} - \overline{z}) - \log \overline{x}] \right).
\end{equation*}
To expand the first item on the right-hand side of the above
equality, we write it as $\log (1-\frac{z}{x})$, and since
$|\frac{z}{x}| < 1$ we obtain the expansion
\begin{equation*}
\log (1- \frac{z}{x}) = -\sum_{j=1}^\infty \frac{1}{j}
\left(\frac{z}{x}\right)^j = -\sum_{j=1}^\infty \frac{1}{j} \left(
\frac{r_z e^{i\theta_z}}{r_x e^{i\theta_x}} \right)^j.
\end{equation*}
Taking the conjugate, we obtain the expansion for
$\log(\overline{x} - \overline{z}) - \log \overline{x}$.
Consequently, we have
\begin{equation*}
\begin{aligned}
\log |x-z| - \log|x| &= -\frac{1}{2} \sum_{j=1}^\infty \frac{1}{j} \left[ \left( \frac{r_z e^{i\theta_z}}{r_x e^{i\theta_x}} \right)^j + \left( \frac{r_z e^{-i\theta_z}}{r_x e^{-i\theta_x}} \right)^j \right]\\
&= -\sum_{j=1}^\infty \frac{1}{j} \left(\frac{\cos j\theta_x}{r_x^j} [r_z^j \cos j\theta_z] + \frac{\sin j\theta_x}{r_x^j} [r_z^j \sin j\theta_z] \right)\\
&= -\sum_{j=1}^\infty \frac{1}{j} \left(\frac{\cos
j\theta_x}{r_x^j} \sum_{|\alpha| = j} a^j_\alpha z^\alpha +
\frac{\sin j\theta_x}{r_x^j} \sum_{|\alpha| = j} b^j_\alpha
z^\alpha  \right).
\end{aligned}
\end{equation*}
In the last equality, we understood the variable $z$ as real
variable and used the representation \eqref{eq:abcomp}. Compare
the last term of the above formula with the (real-variable)
multivariate expansion of $\log |x-z| - \log |x|$, we observe that
\begin{equation*}
\sum_{|\alpha| = j} \frac{(-1)^j}{\alpha!} (\partial^\alpha_x
\log|x|) z^\alpha = -\sum_{|\alpha| = j} \frac{1}{j}
\left(\frac{\cos j\theta_x}{r_x^j}  a^j_\alpha + \frac{\sin
j\theta_x}{r_x^j} b^j_\alpha  \right)z^\alpha .
\end{equation*}
For each double index $\alpha$, we get \eqref{eq:DGamma}.
\subsection{Proof of formula \eqref{eq:ortho}}
\label{sec:app3}
 The proof is a straightforward computation. The elements of the
matrix $\Ccoef^t \Ccoef$ correspond to inner products of columns
of the matrix $\Ccoef$, that is, the inner products of vectors
formed by evaluating $\sin$ and $\cos$ functions at $(k_1
\theta_1, \ldots, k_1 \theta_N)$ and at $(k_2 \theta_1, \ldots,
k_2 \theta_N)$, where $k_1, k_2 = 1, 2, \ldots, K$, $k_1 + k_2 \le
2K < N$, and $\theta_j = 2\pi j/N$, $j=1, 2, \ldots, N$. When two
$\cos$ vectors are chosen, the inner product becomes
\begin{equation*}
\sum_{j=1}^N \cos k_1 \theta_j \cos k_2 \theta_j =  \frac{1}{4}
\sum_{j=1}^N \left( e^{i\frac{2\pi (k_1 + k_2) j}{N}} +
e^{-i\frac{2\pi (k_1 + k_2) j}{N}} + e^{i\frac{2\pi (k_1 - k_2)
j}{N}} + e^{-i\frac{2\pi (k_1 - k_2) j}{N}} \right).
\end{equation*}
Since $k_1 + k_2$ is an integer less than $N$, the first two sums
always vanish because
\begin{equation*}
\sum_{j=1}^N e^{i\frac{2\pi(k_1 + k_2)j}{N}} = \frac{1-e^{i
2\pi(k_1+k_2)}}{1-e^{i\frac{2\pi(k_1+k_2)}{N}}} = 0.
\end{equation*}
When $k_1 = k_2$, the last two sums contribute and the overall
result is $N/2$. When $k_1 \ne k_2$, the inner products under
estimation is zero according to the above observation.

The case of inner product with $\sin$ and $\sin$ or $\cos$ and
$\cos$ vectors can be similarly analyzed, and it can be easily
seen that \eqref{eq:ortho} holds.


\end{document}